\numberwithin{equation}{section}
\begin{document}

\title{Estimates for vector-valued intrinsic square functions and their commutators on certain weighted amalgam spaces}
\author{Hua Wang \footnote{E-mail address: wanghua@pku.edu.cn.}\\
\footnotesize{College of Mathematics and Econometrics, Hunan University, Changsha 410082, P. R. China}\\
\footnotesize{\&~Department of Mathematics and Statistics, Memorial University, St. John's, NL A1C 5S7, Canada}}
\date{}
\maketitle

\begin{abstract}
In this paper, we first introduce some new kinds of weighted amalgam spaces. Then we deal with the vector-valued intrinsic square functions, which are given by
\begin{equation*}
\mathcal S_\gamma(\vec{f})(x)=\Bigg(\sum_{j=1}^\infty\big|\mathcal S_\gamma(f_j)(x)\big|^2\Bigg)^{1/2},
\end{equation*}
where $0<\gamma\leq1$ and
\begin{equation*}
\mathcal S_{\gamma}(f_j)(x)=\left(\iint_{\Gamma(x)}\Big[\sup_{\varphi\in{\mathcal C}_\gamma}\big|\varphi_t*f_j(y)\big|\Big]^2\frac{dydt}{t^{n+1}}\right)^{1/2},\quad j=1,2,\dots.
\end{equation*}
In his fundamental work, Wilson established strong-type and weak-type estimates for vector-valued intrinsic square functions on weighted Lebesgue spaces. The goal of this paper is to extend his results to these weighted amalgam spaces. Moreover, we define vector-valued analogues of commutators with $BMO(\mathbb R^n)$ functions, and obtain the mapping properties of vector-valued commutators on the weighted amalgam spaces as well. In the endpoint case, we also establish the weighted weak $L\log L$-type estimates for vector-valued commutators in the setting of weighted Lebesgue spaces.\\
MSC(2010): 42B25; 42B35; 46E30; 47B47\\
Keywords: Vector-valued intrinsic square functions; weighted amalgam spaces; vector-valued commutators; Muckenhoupt weights; Orlicz spaces.
\end{abstract}

\section{Introduction}

\newtheorem{defn}{Definition}[section]

\newtheorem{theorem}{Theorem}[section]

\newtheorem{corollary}{Corollary}[section]

\newtheorem{lemma}{Lemma}[section]

The intrinsic square functions were first introduced by Wilson in \cite{wilson1,wilson2}; they are defined as follows. For $0<\gamma\le1$, let ${\mathcal C}_\gamma$ be the family of functions $\varphi:\mathbb R^n\longmapsto\mathbb R$ such that $\varphi$'s support is contained in $\{x\in\mathbb R^n: |x|\le1\}$, $\int_{\mathbb R^n}\varphi(x)\,dx=0$, and for all $x, x'\in \mathbb R^n$,
\begin{equation*}
\big|\varphi(x)-\varphi(x')\big|\leq \big|x-x'\big|^{\gamma}.
\end{equation*}
For $(y,t)\in {\mathbb R}^{n+1}_{+}=\mathbb R^n\times(0,+\infty)$ and $f\in L^1_{{loc}}(\mathbb R^n)$, we define
\begin{equation*}
\mathcal A_\gamma(f)(y,t):=\sup_{\varphi\in{\mathcal C}_\gamma}\big|\varphi_t*f(y)\big|=\sup_{\varphi\in{\mathcal C}_\gamma}\bigg|\int_{\mathbb R^n}\varphi_t(y-z)f(z)\,dz\bigg|,
\end{equation*}
where $\varphi_t$ denotes the usual $L^1$ dilation of $\varphi:\varphi_t(y)=t^{-n}\varphi(y/t)$. Then we define the intrinsic square function of $f$ (of order $\gamma$) by the formula
\begin{equation}
\mathcal S_{\gamma}(f)(x):=\left(\iint_{\Gamma(x)}\Big[\mathcal A_\gamma(f)(y,t)\Big]^2\frac{dydt}{t^{n+1}}\right)^{1/2},
\end{equation}
where $\Gamma(x)$ denotes the usual cone of aperture one:
\begin{equation*}
\Gamma(x):=\big\{(y,t)\in{\mathbb R}^{n+1}_+:|x-y|<t\big\}.
\end{equation*}
This new function is independent of any particular kernel, and it dominates pointwise the classical square function~(Lusin area integral) and its real-variable generalizations, one can see more details in \cite{wilson1,wilson2}. In this paper, we will consider the vector-valued extension of the scalar operator $\mathcal S_{\gamma}$. Let $\vec{f}=(f_1,f_2,\ldots)$ be a sequence of locally integrable functions on $\mathbb R^n$. For any $x\in\mathbb R^n$ and $0<\gamma\leq1$, Wilson \cite{wilson2} also defined the following vector-valued intrinsic square function of $\vec{f}$ by
\begin{equation}\label{vectorvalued}
\mathcal S_\gamma(\vec{f})(x):=\bigg(\sum_{j=1}^\infty\big|\mathcal S_\gamma(f_j)(x)\big|^2\bigg)^{1/2}.
\end{equation}

In \cite{wilson2}, Wilson has established the following two results.

\begin{theorem}[\cite{wilson2}]\label{strong}
Let $0<\gamma\leq 1$, $1<p<\infty$ and $w\in A_p$(\mbox{Muckenhoupt weight class}). Then there exists a constant $C>0$ independent of $\vec{f}=(f_1,f_2,\ldots)$ such that
\begin{equation*}
\bigg\|\bigg(\sum_{j=1}^\infty\big|\mathcal S_\gamma(f_j)\big|^2\bigg)^{1/2}\bigg\|_{L^p_w}
\leq C \bigg\|\bigg(\sum_{j=1}^\infty\big|f_j\big|^2\bigg)^{1/2}\bigg\|_{L^p_w}.
\end{equation*}
\end{theorem}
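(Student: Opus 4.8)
The plan is to reduce the vector-valued estimate to its scalar counterpart combined with a vector-valued (Fefferman--Stein type) inequality for a suitable maximal operator, and then to exploit the well-known pointwise control of $\mathcal S_\gamma$ by a grand maximal-type object. First I would recall Wilson's scalar weighted bound: for $0<\gamma\le 1$, $1<p<\infty$ and $w\in A_p$ one has $\|\mathcal S_\gamma(f)\|_{L^p_w}\le C\|f\|_{L^p_w}$, and, more importantly, the stronger fact that $\mathcal S_\gamma$ satisfies a good weighted $L^2$ inequality against the Hardy--Littlewood maximal function, together with the vector-valued Fefferman--Stein inequality $\big\|(\sum_j|Mf_j|^2)^{1/2}\big\|_{L^p_w}\le C\big\|(\sum_j|f_j|^2)^{1/2}\big\|_{L^p_w}$ valid for all $1<p<\infty$ and $w\in A_p$. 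The natural route is to verify that the map $\vec f\mapsto \mathcal S_\gamma(\vec f)$ is bounded on $L^p_w(\ell^2)$ by interpolating a vector-valued $L^2_w$ bound with vector-valued weak-type or Calder\'on--Zygmund arguments.

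Concretely, the key steps, in order, would be: (1) Establish the endpoint $p=2$ vector-valued inequality $\big\|\mathcal S_\gamma(\vec f)\big\|_{L^2_w}\le C\big\|(\sum_j|f_j|^2)^{1/2}\big\|_{L^2_w}$ for $w\in A_2$; here one uses that, for each fixed $j$, $\|\mathcal S_\gamma(f_j)\|^2_{L^2_w}=\iint_{\mathbb R^{n+1}_+}[\mathcal A_\gamma(f_j)(y,t)]^2 w_t(y)\,\frac{dy\,dt}{t}$ after a Fubini step, where $w_t(y)$ is essentially an average of $w$; summing in $j$ and invoking Wilson's Carleson-measure / good-$\lambda$ machinery for $\mathcal A_\gamma$ gives the claim with a genuinely $\ell^2$-valued right side. (2) Show that $\mathcal S_\gamma$, viewed as an operator on $\ell^2$-valued functions, is a vector-valued Calder\'on--Zygmund operator in the sense that it maps $L^1(\ell^2)\to L^{1,\infty}(\ell^2)$ (unweighted), using Wilson's kernel estimates for $\mathcal A_\gamma$ uniformly in $\varphi\in\mathcal C_\gamma$ and the standard Calder\'on--Zygmund decomposition applied to the scalar function $(\sum_j|f_j|^2)^{1/2}$. (3) Upgrade to the weighted setting: from the unweighted weak $(1,1)$ bound and the $L^2_w$ bound one obtains, via the sharp maximal function or via a weighted good-$\lambda$ inequality relating $\mathcal S_\gamma(\vec f)$ to $M(\sum_j|f_j|^2)^{1/2}$ (or to $(M(\sum_j|f_j|^{2r})^{1/r})^{1/2}$ for a small $r>1$), the full range $\|\mathcal S_\gamma(\vec f)\|_{L^p_w}\le C\|(\sum_j|f_j|^2)^{1/2}\|_{L^p_w}$ for all $1<p<\infty$, $w\in A_p$, by the standard extrapolation/interpolation argument.

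Alternatively, and perhaps more cleanly, one can avoid redoing the Calder\'on--Zygmund theory: since Wilson's scalar result already gives $\mathcal S_\gamma:L^p_w\to L^p_w$ for \emph{every} $1<p<\infty$ and \emph{every} $w\in A_p$, one may apply the Rubio de Francia weighted vector-valued extrapolation theorem directly to the sublinear operator $\mathcal S_\gamma$. That theorem states precisely that a single weighted $L^{p_0}$ bound valid for all $A_{p_0}$ weights self-improves to the $\ell^2$-valued (indeed $\ell^r$-valued) inequality on $L^p_w$ for all $1<p<\infty$, $w\in A_p$; this would finish the proof in one line once the scalar input is cited. I would present this extrapolation route as the main argument and mention the direct Carleson-measure proof of the $L^2_w$ case as an aside.

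The main obstacle I anticipate is Step (1)/(2): making the $\ell^2$-valued $L^2_w$ estimate clean requires handling the supremum over $\varphi\in\mathcal C_\gamma$ \emph{inside} the $\ell^2$ sum correctly, i.e., controlling $\sum_j [\sup_{\varphi}|\varphi_t*f_j(y)|]^2$ rather than $\sup_\varphi \sum_j |\varphi_t*f_j(y)|^2$; Wilson's observation that $\mathcal A_\gamma(f)(y,t)$ is itself dominated by a dilation-type average of $|f|$ (and hence each $\mathcal S_\gamma(f_j)$ is pointwise comparable to a continuous square function with a fixed admissible kernel, uniformly in $j$) is what makes the vector-valued passage legitimate. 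If one instead takes the extrapolation route, there is essentially no obstacle beyond correctly quoting the hypotheses of the weighted extrapolation theorem; the only care needed is that $\mathcal S_\gamma$ is well-defined and finite a.e.\ on the relevant dense class, which follows from the scalar boundedness.
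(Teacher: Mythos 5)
The paper does not prove this statement: Theorem \ref{strong} is quoted directly from Wilson's monograph \cite{wilson2}, and the rest of the paper uses it only as a black box (for example in bounding $I_1(y,r)$ in the proof of Theorem \ref{mainthm:1}). So there is no internal proof to compare against, and your proposal must be judged on its own terms.

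Your primary route via Rubio de Francia extrapolation is correct and is the most efficient way to obtain this statement from the scalar theory. Once Wilson's scalar estimate $\|\mathcal S_\gamma f\|_{L^{p_0}_w}\le C\|f\|_{L^{p_0}_w}$ is granted for a single exponent $p_0\in(1,\infty)$ and all $w\in A_{p_0}$ (a theorem of \cite{wilson1,wilson2}), the weighted vector-valued extrapolation theorem applied to the family of pairs $\{(\,|\mathcal S_\gamma f|,\,|f|\,)\}$ yields
$\big\|(\sum_j|\mathcal S_\gamma f_j|^2)^{1/2}\big\|_{L^p_w}\le C\big\|(\sum_j|f_j|^2)^{1/2}\big\|_{L^p_w}$
for every $1<p<\infty$, $w\in A_p$, which is precisely the claim; the a.e.-finiteness caveat you raise is handled exactly as you say, by testing on a dense class where scalar boundedness gives finiteness. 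Your alternative direct route (steps (1)--(3)) would also succeed, and you correctly pinpoint the delicate issue that the supremum over $\varphi\in\mathcal C_\gamma$ sits \emph{inside} the $\ell^2$-sum: Wilson handles this by dominating $\mathcal A_\gamma(f)(y,t)$ by an averaged, kernel-independent quantity so that each $\mathcal S_\gamma(f_j)$ is controlled uniformly in $j$ by a square function with a fixed admissible kernel, after which the Calder\'on--Zygmund and good-$\lambda$ machinery goes through in the $\ell^2$-valued setting. Relative to that route, extrapolation buys you a one-line argument once the scalar theorem is cited, at the cost of invoking a deeper abstract tool; the direct route is longer but self-contained and exposes the structural facts (Carleson measure, weak $(1,1)$) that are needed elsewhere anyway.
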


\begin{theorem}[\cite{wilson2}]
Let $0<\gamma\leq 1$ and $p=1$. Then for any given weight $w$ and $\lambda>0$, there exists a constant $C>0$ independent of $\vec{f}=(f_1,f_2,\ldots)$ and $\lambda>0$ such that
\begin{equation*}
w\bigg(\bigg\{x\in\mathbb R^n:\bigg(\sum_{j=1}^\infty\big|\mathcal S_{\gamma}(f_j)(x)\big|^2\bigg)^{1/2}>\lambda\bigg\}\bigg)
\leq \frac{C}{\lambda}\int_{\mathbb R^n}\bigg(\sum_{j=1}^\infty\big|f_j(x)\big|^2\bigg)^{1/2}M(w)(x)\,dx,
\end{equation*}
where $M$ denotes the standard Hardy--Littlewood maximal operator.
\end{theorem}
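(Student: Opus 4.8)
The plan is to combine a Calder\'on--Zygmund decomposition with a Fefferman--Stein inequality at the $L^2$ level. Two structural facts drive the argument. First, $\mathcal S_\gamma$ is subadditive on sequences, $\mathcal S_\gamma(\vec f+\vec h)(x)\le\mathcal S_\gamma(\vec f)(x)+\mathcal S_\gamma(\vec h)(x)$, which follows from the pointwise subadditivity of $\mathcal A_\gamma$ in $(y,t)$ together with Minkowski's inequality, applied first in $\ell^2$ and then in the $L^2$ of the cone $\Gamma(x)$ with measure $dy\,dt/t^{n+1}$. Second, for every weight $v\ge0$ one has the vector-valued Fefferman--Stein inequality
\begin{equation*}
\int_{\mathbb R^n}\big|\mathcal S_\gamma(\vec h)(x)\big|^2v(x)\,dx\le C\int_{\mathbb R^n}\bigg(\sum_{j=1}^\infty\big|h_j(x)\big|^2\bigg)M(v)(x)\,dx,
\end{equation*}
which is obtained simply by summing over $j$ Wilson's scalar Fefferman--Stein estimate $\int(\mathcal S_\gamma f)^2v\le C\int f^2M(v)$ (valid for any weight $v$; see \cite{wilson1,wilson2}), using the identity $\mathcal S_\gamma(\vec h)(x)^2=\sum_j\mathcal S_\gamma(h_j)(x)^2$. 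Throughout write $|\vec h|(x):=\big(\sum_j|h_j(x)|^2\big)^{1/2}$. We may assume $\int_{\mathbb R^n}|\vec f|M(w)\,dx<\infty$ (otherwise nothing is to be proved) and, after a routine truncation, that $|\vec f|\in L^1(\mathbb R^n)$.

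Fix $\lambda>0$ and perform the Calder\'on--Zygmund decomposition of $|\vec f|$ at height $\lambda$: there are pairwise disjoint dyadic cubes $\{Q_k\}$ with $\Omega:=\bigcup_kQ_k$, $\lambda<|Q_k|^{-1}\int_{Q_k}|\vec f|\le2^n\lambda$, and $|\vec f|\le\lambda$ a.e.\ on $\mathbb R^n\setminus\Omega$. Decompose each component as $f_j=h_j+\sum_kb_{k,j}$ with $b_{k,j}:=\big(f_j-(f_j)_{Q_k}\big)\chi_{Q_k}$, where $(f_j)_{Q_k}$ is the mean of $f_j$ over $Q_k$, so that $\vec b_k$ is supported in $Q_k$ and $\int_{Q_k}b_{k,j}=0$ for every $j$; then $|\vec h|\le2^n\lambda$ a.e., $\int_{Q_k}|\vec b_k|\le2\int_{Q_k}|\vec f|$, and $\sum_k|Q_k|\le\lambda^{-1}\int_{\mathbb R^n}|\vec f|$. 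Let $c_k$ be the center of $Q_k$, set $Q_k^{*}:=2\sqrt n\,Q_k$ and $\Omega^{*}:=\bigcup_kQ_k^{*}$. By subadditivity,
\begin{equation*}
w\big(\{\mathcal S_\gamma(\vec f)>\lambda\}\big)\le w(\Omega^{*})+w\big(\{x\notin\Omega^{*}:\mathcal S_\gamma(\vec h)(x)>\lambda/2\}\big)+w\big(\{x\notin\Omega^{*}:\mathcal S_\gamma(\vec b)(x)>\lambda/2\}\big).
\end{equation*}
For the first term, $Q_k\subset Q_k^{*}$ gives $|Q_k^{*}|^{-1}\int_{Q_k^{*}}w\le\inf_{Q_k}M(w)$, hence $w(Q_k^{*})\le C_n|Q_k|\inf_{Q_k}M(w)\le C_n\lambda^{-1}\int_{Q_k}|\vec f|M(w)$, and summing in $k$ yields $w(\Omega^{*})\le C\lambda^{-1}\int_{\mathbb R^n}|\vec f|M(w)$.

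For the bad part, subadditivity gives $\mathcal S_\gamma(\vec b)\le\sum_k\mathcal S_\gamma(\vec b_k)$; and for $x\notin Q_k^{*}$ the cancellation $\int_{Q_k}b_{k,j}=0$ together with the H\"older condition $|\varphi(u)-\varphi(v)|\le|u-v|^\gamma$ of each $\varphi\in\mathcal C_\gamma$ gives $|\varphi_t(y-z)-\varphi_t(y-c_k)|\le t^{-n-\gamma}\ell(Q_k)^\gamma$ for $z\in Q_k$, whence, inserting this bound, using Minkowski's inequality in $\ell^2$, and integrating over $\Gamma(x)$ (on which the integrand vanishes unless $t\gtrsim|x-c_k|$ because $x\notin Q_k^{*}$),
\begin{equation*}
\mathcal S_\gamma(\vec b_k)(x)\le C_{n,\gamma}\,\ell(Q_k)^\gamma\bigg(\int_{Q_k}|\vec b_k|\bigg)\,|x-c_k|^{-(n+\gamma)},\qquad x\notin Q_k^{*}.
\end{equation*}
Then Chebyshev's inequality and $\mathbb R^n\setminus\Omega^{*}\subset\mathbb R^n\setminus Q_k^{*}$ give
\begin{equation*}
w\big(\{x\notin\Omega^{*}:\mathcal S_\gamma(\vec b)(x)>\lambda/2\}\big)\le\frac2\lambda\sum_k\int_{\mathbb R^n\setminus Q_k^{*}}\mathcal S_\gamma(\vec b_k)(x)\,w(x)\,dx,
\end{equation*}
and decomposing $\{|x-c_k|\ge\sqrt n\,\ell(Q_k)\}$ into dyadic annuli, with $|2^iQ_k|^{-1}\int_{2^iQ_k}w\le\inf_{Q_k}M(w)$, bounds the $k$-th integral by $C_{n,\gamma}\big(\int_{Q_k}|\vec b_k|\big)\inf_{Q_k}M(w)\le C\int_{Q_k}|\vec f|M(w)$; summing over the disjoint $Q_k$ controls this term by $C\lambda^{-1}\int_{\mathbb R^n}|\vec f|M(w)$.

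The main difficulty is the good part. One cannot just apply Chebyshev and the Fefferman--Stein inequality with the weight $w$ itself, because $|\vec h|$ may concentrate on a cube $Q_k$ precisely where $M(w)$ is large --- recall that $M(w)$ need not belong to any class $A_p$ --- so $\int|\vec h|^2M(w)$ need not be dominated by $\lambda\int|\vec f|M(w)$. The remedy is to use the inequality with the truncated weight $v:=w\,\chi_{\mathbb R^n\setminus\Omega^{*}}$: since $v\equiv0$ on $Q_k^{*}$, for $x\in Q_k$ every cube $Q\ni x$ is either contained in $Q_k^{*}$, contributing nothing to $M(v)(x)$, or has side length comparable to $\ell(Q_k)$, in which case $|Q|^{-1}\int_Qv\le C_nM(w)(y)$ for every $y\in Q_k$; consequently $M(v)\le C_n\inf_{Q_k}M(w)$ on $Q_k$, while $M(v)\le M(w)$ everywhere. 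Combining Chebyshev's inequality, the Fefferman--Stein inequality for $v$, the bound $|\vec h|\le2^n\lambda$, and this collapse of $M(v)$ --- splitting $\int|\vec h|^2M(v)$ over $\mathbb R^n\setminus\Omega$ (where $\vec h=\vec f$, $|\vec f|\le\lambda$, $M(v)\le M(w)$) and over each $Q_k$ (where $|\vec h|=|\vec f_{Q_k}|\le|Q_k|^{-1}\int_{Q_k}|\vec f|$ and $M(v)\le C_n\inf_{Q_k}M(w)$) --- one gets $w\big(\{x\notin\Omega^{*}:\mathcal S_\gamma(\vec h)(x)>\lambda/2\}\big)\le C\lambda^{-1}\int_{\mathbb R^n}|\vec f|M(w)$. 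Adding the three contributions completes the proof. The only genuinely non-routine point is the verification that $M(w\,\chi_{\mathbb R^n\setminus\Omega^{*}})$ collapses to $\inf_{Q_k}M(w)$ on each $Q_k$; the tail estimate for $\mathcal S_\gamma(\vec b_k)$ is routine once the H\"older continuity of $\varphi$ is exploited.
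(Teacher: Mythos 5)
The paper does not prove this statement; it is quoted verbatim from Wilson's monograph \cite{wilson2} and used as a black box. So there is no in-paper argument to compare against. That said, your proof is correct, and it is in fact the standard (and, to my knowledge, essentially Wilson's own) argument: a Calder\'on--Zygmund decomposition of the vector-valued density $\|\vec f\|_{\ell^2}$ at height $\lambda$, the trivial estimate for $w(\Omega^*)$ via $M(w)$, the kernel/tail estimate for the bad part exploiting $\int_{Q_k} b_{k,j}=0$ and the H\"older regularity of $\varphi\in\mathcal C_\gamma$, and --- the one non-routine step --- the good-part estimate carried out against the truncated weight $v=w\chi_{\mathbb R^n\setminus\Omega^*}$, using the Fefferman--Stein-type $L^2$ inequality $\int(\mathcal S_\gamma f)^2 v\le C\int f^2 M(v)$ (valid for an arbitrary weight $v$), which lifts to the vector-valued setting by the identity $\mathcal S_\gamma(\vec h)^2=\sum_j\mathcal S_\gamma(h_j)^2$. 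The computations you sketch all check out: the support restriction on $\Gamma(x)$ does force $t\gtrsim|x-c_k|$ when $x\notin Q_k^*$; the annular summation converges precisely because of the extra factor $t^{-\gamma}$; and the collapse $M(v)\le C_n\inf_{Q_k}M(w)$ on $Q_k$ follows because any cube through $x\in Q_k$ meeting the support of $v$ must have side length $\gtrsim\ell(Q_k)$. Your remark that one \emph{cannot} simply apply Fefferman--Stein with $w$ itself (since $M(w)\notin A_\infty$ in general) is exactly the right diagnosis and is the reason the truncated weight is needed. So: correct proof, same route as the classical one the paper outsources to \cite{wilson2}.
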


If we take $w\in A_1$, then $M(w)(x)\le C\cdot w(x)$ for a.e.$x\in\mathbb R^n$ by the definition of $A_1$ weight (see Section 2). Hence, as a straightforward consequence of Theorem 1.2, we obtain

\begin{theorem}\label{weak}
Let $0<\gamma\leq 1$, $p=1$ and $w\in A_1$. Then there exists a constant $C>0$ independent of $\vec{f}=(f_1,f_2,\ldots)$ such that
\begin{equation*}
\bigg\|\bigg(\sum_{j=1}^\infty\big|\mathcal S_{\gamma}(f_j)\big|^2\bigg)^{1/2}\bigg\|_{WL^1_w}
\leq C \bigg\|\bigg(\sum_{j=1}^\infty\big|f_j\big|^2\bigg)^{1/2}\bigg\|_{L^1_w}.
\end{equation*}
\end{theorem}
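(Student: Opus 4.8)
The plan is to obtain Theorem \ref{weak} as an immediate corollary of the general weak-type estimate in Theorem 1.2, by feeding in the defining inequality of the Muckenhoupt class $A_1$. First I would recall that $w\in A_1$ is equivalent to the existence of a constant $C_w>0$ for which $M(w)(x)\le C_w\,w(x)$ for almost every $x\in\mathbb R^n$; this is exactly the pointwise bound mentioned just above the statement, and it will be recorded in Section 2.

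Next I would fix an arbitrary $\lambda>0$ and apply Theorem 1.2 to the sequence $\vec{f}=(f_1,f_2,\dots)$ with the weight $w$, which is permitted since that theorem is valid for any weight. This gives
\begin{equation*}
w\bigg(\bigg\{x\in\mathbb R^n:\mathcal S_\gamma(\vec{f})(x)>\lambda\bigg\}\bigg)\le\frac{C}{\lambda}\int_{\mathbb R^n}\bigg(\sum_{j=1}^\infty\big|f_j(x)\big|^2\bigg)^{1/2}M(w)(x)\,dx,
\end{equation*}
with $C$ depending only on $n$ and $\gamma$, and in particular independent of $\vec{f}$ and $\lambda$. I would then insert the pointwise bound $M(w)\le C_w\,w$ into the integrand, which replaces $M(w)(x)$ by $C_w\,w(x)$ and therefore yields
\begin{equation*}
w\bigg(\bigg\{x\in\mathbb R^n:\mathcal S_\gamma(\vec{f})(x)>\lambda\bigg\}\bigg)\le\frac{C\,C_w}{\lambda}\int_{\mathbb R^n}\bigg(\sum_{j=1}^\infty\big|f_j(x)\big|^2\bigg)^{1/2}w(x)\,dx=\frac{C\,C_w}{\lambda}\bigg\|\bigg(\sum_{j=1}^\infty\big|f_j\big|^2\bigg)^{1/2}\bigg\|_{L^1_w}.
\end{equation*}

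Finally, since this estimate is uniform in $\lambda>0$, multiplying by $\lambda$ and taking the supremum over all $\lambda>0$ produces, directly from the definition of the weighted weak space $WL^1_w$ and the identity $\mathcal S_\gamma(\vec{f})=\big(\sum_{j=1}^\infty|\mathcal S_\gamma(f_j)|^2\big)^{1/2}$,
\begin{equation*}
\bigg\|\bigg(\sum_{j=1}^\infty\big|\mathcal S_\gamma(f_j)\big|^2\bigg)^{1/2}\bigg\|_{WL^1_w}\le C\,C_w\,\bigg\|\bigg(\sum_{j=1}^\infty\big|f_j\big|^2\bigg)^{1/2}\bigg\|_{L^1_w},
\end{equation*}
which is precisely the asserted inequality, with a constant depending only on $n$, $\gamma$, and the $A_1$ constant of $w$, hence independent of $\vec{f}$. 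I do not expect any genuine obstacle here: apart from Theorem 1.2 the only tool is the elementary $A_1$ inequality, so once that is available the argument collapses to a single substitution inside the integral.
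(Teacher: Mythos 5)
Your proposal is correct and follows exactly the paper's own reasoning: the paper also derives Theorem \ref{weak} from Theorem 1.2 by substituting the $A_1$ pointwise inequality $M(w)(x)\le C\,w(x)$ into the right-hand side and then taking the supremum over $\lambda>0$. There is no discrepancy between the two arguments, only that the paper compresses the final two steps into the phrase ``as a straightforward consequence.''
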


Let $b$ be a locally integrable function on $\mathbb R^n$ and $0<\gamma\leq 1$, the commutators generated by $b$ and intrinsic square function $\mathcal S_{\gamma}$ are defined by the author as follows~(see \cite{wang}).
\begin{equation}
\big[b,\mathcal S_\gamma\big](f)(x):=\left(\iint_{\Gamma(x)}\sup_{\varphi\in{\mathcal C}_\gamma}\bigg|\int_{\mathbb R^n}\big[b(x)-b(z)\big]\varphi_t(y-z)f(z)\,dz\bigg|^2\frac{dydt}{t^{n+1}}\right)^{1/2}.
\end{equation}

In this paper, we will consider the vector-valued analogues of these commutator operators. Let $\vec{f}=(f_1,f_2,\ldots)$ be a sequence of locally integrable functions on $\mathbb R^n$. For any $x\in\mathbb R^n$ and $0<\gamma\leq1$, in the same way, we can define the commutators for vector-valued intrinsic square function of $\vec{f}$ as
\begin{equation}\label{vectorvaluedc}
\big[b,\mathcal S_\gamma\big](\vec{f})(x):=\bigg(\sum_{j=1}^\infty\big|\big[b,\mathcal S_\gamma\big](f_j)(x)\big|^2\bigg)^{1/2}.
\end{equation}

We equip the $n$-dimensional Euclidean space $\mathbb R^n$ with the Euclidean norm $|\cdot|$ and the Lebesgue measure $dx$. For any $r>0$ and $y\in\mathbb R^n$, let $B(y,r)=\big\{x\in\mathbb R^n:|x-y|<r\big\}$ denote the open ball centered at $y$ with radius $r$, $B(y,r)^c$ denote its complement and $|B(y,r)|$ be the Lebesgue measure of the ball $B(y,r)$. We also use the notation $\chi_{B(y,r)}$ for the characteristic function of $B(y,r)$. Let $1\leq p,q,\alpha\leq\infty$. We define the amalgam space $(L^p,L^q)^{\alpha}(\mathbb R^n)$ of $L^p(\mathbb R^n)$ and $L^q(\mathbb R^n)$ as the set of all measurable functions $f$ satisfying $f\in L^p_{loc}(\mathbb R^n)$ and $\big\|f\big\|_{(L^p,L^q)^{\alpha}(\mathbb R^n)}<\infty$, where
\begin{equation*}
\begin{split}
\big\|f\big\|_{(L^p,L^q)^{\alpha}(\mathbb R^n)}
:=&\sup_{r>0}\left\{\int_{\mathbb R^n}\Big[\big|B(y,r)\big|^{1/{\alpha}-1/p-1/q}\big\|f\cdot\chi_{B(y,r)}\big\|_{L^p(\mathbb R^n)}\Big]^qdy\right\}^{1/q}\\
=&\sup_{r>0}\Big\|\big|B(y,r)\big|^{1/{\alpha}-1/p-1/q}\big\|f\cdot\chi_{B(y,r)}\big\|_{L^p(\mathbb R^n)}\Big\|_{L^q(\mathbb R^n)},
\end{split}
\end{equation*}
with the usual modification when $p=\infty$ or $q=\infty$. This amalgam space was originally introduced by Fofana in \cite{fofana}. As proved in \cite{fofana} the space $(L^p,L^q)^{\alpha}(\mathbb R^n)$ is non-trivial if and only if $p\leq\alpha\leq q$; thus in the remaining of this paper we will always assume that the condition $p\leq\alpha\leq q$ is fulfilled. Note that
\begin{itemize}
  \item For $1\leq p\leq\alpha\leq q\leq\infty$, one can easily see that $(L^p,L^q)^{\alpha}(\mathbb R^n)\subseteq(L^p,L^q)(\mathbb R^n)$, where $(L^p,L^q)(\mathbb R^n)$ is the Wiener amalgam space defined by (see \cite{F,holland} for more information)
\begin{equation*}
(L^p,L^q)(\mathbb R^n):=\left\{f:\big\|f\big\|_{(L^p,L^q)(\mathbb R^n)}
=\left(\int_{\mathbb R^n}\Big[\big\|f\cdot\chi_{B(y,1)}\big\|_{L^p(\mathbb R^n)}\Big]^qdy\right)^{1/q}<\infty\right\};
\end{equation*}
  \item If $1\leq p<\alpha$ and $q=\infty$, then $(L^p,L^q)^{\alpha}(\mathbb R^n)$ is just the classical Morrey space $\mathcal L^{p,\kappa}(\mathbb R^n)$ defined by (with $\kappa=1-p/{\alpha}$, see \cite{morrey})
\begin{equation*}
\mathcal L^{p,\kappa}(\mathbb R^n):=\left\{f:\big\|f\big\|_{\mathcal L^{p,\kappa}(\mathbb R^n)}
=\sup_{y\in\mathbb R^n,r>0}\left(\frac{1}{|B(y,r)|^\kappa}\int_{B(y,r)}|f(x)|^p\,dx\right)^{1/p}<\infty\right\};
\end{equation*}
  \item If $p=\alpha$ and $q=\infty$, then $(L^p,L^q)^{\alpha}(\mathbb R^n)$ reduces to the usual Lebesgue space $L^{p}(\mathbb R^n)$.
\end{itemize}

In \cite{feuto2} (see also \cite{feuto1,feuto3}), Feuto considered a weighted version of the amalgam space $(L^p,L^q)^{\alpha}(w)$. A weight is any positive measurable function $w$ which is locally integrable on $\mathbb R^n$. Let $1\leq p\leq\alpha\leq q\leq\infty$ and $w$ be a weight on $\mathbb R^n$. We denote by $(L^p,L^q)^{\alpha}(w)$ the weighted amalgam space, the space of all locally integrable functions $f$ satisfying $\big\|f\big\|_{(L^p,L^q)^{\alpha}(w)}<\infty$, where
\begin{equation}\label{A}
\begin{split}
\big\|f\big\|_{(L^p,L^q)^{\alpha}(w)}
:=&\sup_{r>0}\left\{\int_{\mathbb R^n}\Big[w(B(y,r))^{1/{\alpha}-1/p-1/q}\big\|f\cdot\chi_{B(y,r)}\big\|_{L^p_w}\Big]^qdy\right\}^{1/q}\\
=&\sup_{r>0}\Big\|w(B(y,r))^{1/{\alpha}-1/p-1/q}\big\|f\cdot\chi_{B(y,r)}\big\|_{L^p_w}\Big\|_{L^q(\mathbb R^n)},
\end{split}
\end{equation}
with the usual modification when $q=\infty$ and $w(B(y,r))=\int_{B(y,r)}w(x)\,dx$ is the weighted measure of $B(y,r)$. Then for $1\leq p\leq\alpha\leq q\leq\infty$, we know that $(L^p,L^q)^{\alpha}(w)$ becomes a Banach function space with respect to the norm $\|\cdot\|_{(L^p,L^q)^{\alpha}(w)}$. Furthermore, we denote by $(WL^p,L^q)^{\alpha}(w)$ the weighted weak amalgam space of all measurable functions $f$ for which (see \cite{feuto2})
\begin{equation}\label{WA}
\begin{split}
\big\|f\big\|_{(WL^p,L^q)^{\alpha}(w)}
:=&\sup_{r>0}\left\{\int_{\mathbb R^n}\Big[w(B(y,r))^{1/{\alpha}-1/p-1/q}\big\|f\cdot\chi_{B(y,r)}\big\|_{WL^p_w}\Big]^qdy\right\}^{1/q}\\
=&\sup_{r>0}\Big\|w(B(y,r))^{1/{\alpha}-1/p-1/q}\big\|f\cdot\chi_{B(y,r)}\big\|_{WL^p_w}\Big\|_{L^q(\mathbb R^n)}<\infty.
\end{split}
\end{equation}

Note that
\begin{itemize}
  \item If $1\leq p<\alpha$ and $q=\infty$, then $(L^p,L^q)^{\alpha}(w)$ is just the weighted Morrey space $\mathcal L^{p,\kappa}(w)$ defined by (with $\kappa=1-p/{\alpha}$, see \cite{komori})
\begin{equation*}
\begin{split}
&\mathcal L^{p,\kappa}(w)\\
:=&\left\{f :\big\|f\big\|_{\mathcal L^{p,\kappa}(w)}
=\sup_{y\in\mathbb R^n,r>0}\left(\frac{1}{w(B(y,r))^{\kappa}}\int_{B(y,r)}|f(x)|^pw(x)\,dx\right)^{1/p}<\infty\right\},
\end{split}
\end{equation*}
and $(WL^p,L^q)^{\alpha}(w)$ is just the weighted weak Morrey space $W\mathcal L^{p,\kappa}(w)$ defined by (with $\kappa=1-p/{\alpha}$)
\begin{equation*}
\begin{split}
&W\mathcal L^{p,\kappa}(w)\\
:=&\left\{f :\big\|f\big\|_{W\mathcal L^{p,\kappa}(w)}
=\sup_{y\in\mathbb R^n,r>0}\sup_{\lambda>0}\frac{1}{w(B(y,r))^{\kappa/p}}\lambda\cdot\Big[w\big(\big\{x\in B(y,r):|f(x)|>\lambda\big\}\big)\Big]^{1/p}
<\infty\right\};
\end{split}
\end{equation*}
  \item If $p=\alpha$ and $q=\infty$, then $(L^p,L^q)^{\alpha}(w)$ reduces to the weighted Lebesgue space $L^{p}_w(\mathbb R^n)$, and $(WL^p,L^q)^{\alpha}(w)$ reduces to the weighted weak Lebesgue space $WL^{p}_w(\mathbb R^n)$.
\end{itemize}

The main purpose of this paper is twofold. We first define some new kinds of weighted amalgam spaces, and then we are going to prove that vector-valued intrinsic square functions \eqref{vectorvalued} and associated vector-valued commutators \eqref{vectorvaluedc} which are known to be bounded on weighted Lebesgue spaces, are also bounded on these new weighted
spaces under appropriate conditions.

Throughout this paper, the letter $C$ always denotes a positive constant independent of the main parameters involved, but it may be different from line to line. We also use $A\approx B$ to denote the equivalence of $A$ and $B$; that is, there exist two positive constants $C_1$, $C_2$ independent of $A$ and $B$ such that $C_1 A\leq B\leq C_2 A$.

\section{Main results}

\subsection{Notation and preliminaries}
A weight $w$ is said to belong to the Muckenhoupt's class $A_p$ for $1<p<\infty$, if there exists a constant $C>0$ such that
\begin{equation*}
\left(\frac1{|B|}\int_B w(x)\,dx\right)^{1/p}\left(\frac1{|B|}\int_B w(x)^{-p'/p}\,dx\right)^{1/{p'}}\leq C
\end{equation*}
for every ball $B\subset\mathbb R^n$, where $p'$ is the dual of $p$ such that $1/p+1/{p'}=1$. The class $A_1$ is defined replacing the above inequality by
\begin{equation*}
\frac1{|B|}\int_B w(x)\,dx\leq C\cdot\underset{x\in B}{\mbox{ess\,inf}}\;w(x)
\end{equation*}
for every ball $B\subset\mathbb R^n$. We also define $A_\infty=\bigcup_{1\leq p<\infty}A_p$. For some $t>0$, the notation $t B$ stands for the ball with the same center as $B=B(y,r_B)$ and with radius $t\cdot r_B$. It is well known that if $w\in A_p$ with $1\leq p<\infty$(or $w\in A_\infty$), then $w$ satisfies the \emph{doubling} condition; that is, for any ball $B$ in $\mathbb R^n$, there exists an absolute constant $C>0$ such that (see \cite{garcia})
\begin{equation}\label{weights}
w(2B)\leq C\,w(B).
\end{equation}
When $w$ satisfies this \emph{doubling} condition \eqref{weights}, we denote $w\in\Delta_2$ for brevity.
In general, for $w\in A_1$ and any $l\in\mathbb Z_+$, there exists an absolute constant $C>0$ such that (see \cite{garcia})
\begin{equation}\label{general weights}
w\big(2^l B\big)\le C\cdot 2^{ln}w(B).
\end{equation}
Moreover, if $w\in A_\infty$, then for any ball $B$ in $\mathbb R^n$ and any measurable subset $E$ of a ball $B$, there exists a number $\delta>0$ independent of $E$ and $B$ such that (see \cite{garcia})
\begin{equation}\label{compare}
\frac{w(E)}{w(B)}\le C\left(\frac{|E|}{|B|}\right)^\delta.
\end{equation}
Equivalently, we could define the above notions with cubes
instead of balls. Hence we shall use these two different definitions appropriate to calculations.
Given a weight $w$ on $\mathbb R^n$, as usual, the weighted Lebesgue space $L^p_w(\mathbb R^n)$ for $1\leq p<\infty$ is defined as the set of all functions $f$ such that
\begin{equation*}
\big\|f\big\|_{L^p_w}:=\bigg(\int_{\mathbb R^n}\big|f(x)\big|^pw(x)\,dx\bigg)^{1/p}<\infty.
\end{equation*}
We also denote by $WL^p_w(\mathbb R^n)$($1\leq p<\infty$) the weighted weak Lebesgue space consisting of all measurable functions $f$ such that
\begin{equation*}
\big\|f\big\|_{WL^p_w}:=
\sup_{\lambda>0}\lambda\cdot\Big[w\big(\big\{x\in\mathbb R^n:|f(x)|>\lambda\big\}\big)\Big]^{1/p}<\infty.
\end{equation*}

We next recall some basic definitions and facts about Orlicz spaces needed for the proofs of the main results. For further information on the subject, one can see \cite{rao}. A function $\mathcal A$ is called a Young function if it is continuous, nonnegative, convex and strictly increasing on $[0,+\infty)$ with $\mathcal A(0)=0$ and $\mathcal A(t)\to +\infty$ as $t\to +\infty$. Given a Young function $\mathcal A$, we define the $\mathcal A$-average of a function $f$ over a ball $B$ by means of the following Luxemburg norm:
\begin{equation*}
\big\|f\big\|_{\mathcal A,B}
:=\inf\left\{\lambda>0:\frac{1}{|B|}\int_B\mathcal A\left(\frac{|f(x)|}{\lambda}\right)dx\leq1\right\}.
\end{equation*}
When $\mathcal A(t)=t^p$, $1\leq p<\infty$, it is easy to see that
\begin{equation*}
\big\|f\big\|_{\mathcal A,B}=\left(\frac{1}{|B|}\int_B\big|f(x)\big|^p\,dx\right)^{1/p};
\end{equation*}
that is, the Luxemburg norm coincides with the normalized $L^p$ norm. Given a Young function $\mathcal A$, we use $\bar{\mathcal A}$ to denote the complementary Young function associated to $\mathcal A$. Then the following generalized H\"older's inequality holds for any given ball $B$:
\begin{equation*}
\frac{1}{|B|}\int_B\big|f(x)\cdot g(x)\big|\,dx\leq 2\big\|f\big\|_{\mathcal A,B}\big\|g\big\|_{\bar{\mathcal A},B}.
\end{equation*}
In particular, when $\mathcal A(t)=t\cdot(1+\log^+t)$, we know that its complementary Young function is $\bar{\mathcal A}(t)\approx\exp(t)-1$. In this situation, we denote
\begin{equation*}
\big\|f\big\|_{L\log L,B}=\big\|f\big\|_{\mathcal A,B}, \qquad
\big\|g\big\|_{\exp L,B}=\big\|g\big\|_{\bar{\mathcal A},B}.
\end{equation*}
So we have
\begin{equation}\label{holder}
\frac{1}{|B|}\int_B\big|f(x)\cdot g(x)\big|\,dx\leq 2\big\|f\big\|_{L\log L,B}\big\|g\big\|_{\exp L,B}.
\end{equation}

\subsection{Weighted amalgam spaces}

Let us begin with the definitions of the weighted amalgam spaces with Lebesgue measure in (\ref{A}) and (\ref{WA}) replaced by weighted measure.

\begin{defn}\label{amalgam}
Let $1\leq p\leq\alpha\leq q\leq\infty$, and let $w,\mu$ be two weights on $\mathbb R^n$. We denote by $(L^p,L^q)^{\alpha}(w;\mu)$ the weighted amalgam space, the space of all locally integrable functions $f$ with finite norm
\begin{equation*}
\begin{split}
\big\|f\big\|_{(L^p,L^q)^{\alpha}(w;\mu)}
:=&\sup_{r>0}\left\{\int_{\mathbb R^n}\Big[w(B(y,r))^{1/{\alpha}-1/p-1/q}\big\|f\cdot\chi_{B(y,r)}\big\|_{L^p_w}\Big]^q\mu(y)\,dy\right\}^{1/q}\\
=&\sup_{r>0}\Big\|w(B(y,r))^{1/{\alpha}-1/p-1/q}\big\|f\cdot\chi_{B(y,r)}\big\|_{L^p_w}\Big\|_{L^q_{\mu}}<\infty,
\end{split}
\end{equation*}
with the usual modification when $q=\infty$. Then we can see that the space $(L^p,L^q)^{\alpha}(w;\mu)$ equipped with the norm $\big\|\cdot\big\|_{(L^p,L^q)^{\alpha}(w;\mu)}$ is a Banach function space. Furthermore, we denote by $(WL^p,L^q)^{\alpha}(w;\mu)$ the weighted weak amalgam space of all measurable functions $f$ for which
\begin{equation*}
\begin{split}
\big\|f\big\|_{(WL^p,L^q)^{\alpha}(w;\mu)}
:=&\sup_{r>0}\left\{\int_{\mathbb R^n}\Big[w(B(y,r))^{1/{\alpha}-1/p-1/q}\big\|f\cdot\chi_{B(y,r)}\big\|_{WL^p_w}\Big]^q\mu(y)\,dy\right\}^{1/q}\\
=&\sup_{r>0}\Big\|w(B(y,r))^{1/{\alpha}-1/p-1/q}\big\|f\cdot\chi_{B(y,r)}\big\|_{WL^p_w}\Big\|_{L^q_{\mu}}<\infty,
\end{split}
\end{equation*}
with the usual modification when $q=\infty$.
\end{defn}

Recently, in \cite{wang2}, we have established the strong type and weak type estimates for vector-valued intrinsic square functions on some Morrey-type spaces. Inspired by the works mentioned above, it is natural to discuss the boundedness properties in the context of weighted amalgam spaces. We will show that vector-valued intrinsic square function is bounded on $(L^p,L^q)^{\alpha}(w;\mu)$, and is bounded from $(L^1,L^q)^{\alpha}(w;\mu)$ into $(WL^1,L^q)^{\alpha}(w;\mu)$. Our first two results in this paper can be formulated as follows.

\begin{theorem}\label{mainthm:1}
Let $0<\gamma\le1$. Assume that $1<p\leq\alpha<q\leq\infty$, $w\in A_p$ and $\mu\in\Delta_2$. Then there is a constant $C>0$ independent of $\vec{f}=(f_1,f_2,\ldots)$ such that
\begin{equation*}
\bigg\|\bigg(\sum_{j=1}^\infty\big|\mathcal S_{\gamma}(f_j)\big|^2\bigg)^{1/2}\bigg\|_{(L^p,L^q)^{\alpha}(w;\mu)}
\leq C \bigg\|\bigg(\sum_{j=1}^\infty\big|f_j\big|^2\bigg)^{1/2}\bigg\|_{(L^p,L^q)^{\alpha}(w;\mu)}.
\end{equation*}
\end{theorem}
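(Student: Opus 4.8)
The plan is to localize the amalgam norm to a fixed ball $B = B(y_0, r)$, split the input $\vec f$ into a "local" part supported near $2B$ and a "tail" part, and estimate each contribution separately using Wilson's weighted Lebesgue-space bound (Theorem~\ref{strong}) together with the pointwise size/smoothness properties of the intrinsic square function. Fix $r > 0$ and $y_0 \in \mathbb R^n$, set $B = B(y_0, r)$, and write $\vec f = \vec f \cdot \chi_{2B} + \vec f \cdot \chi_{(2B)^c} =: \vec f_0 + \vec f_\infty$. By the vector-valued sublinearity of $\mathcal S_\gamma$ (an immediate consequence of Minkowski's inequality in $\ell^2$), it suffices to bound $\|\mathcal S_\gamma(\vec f_0)\chi_B\|_{L^p_w}$ and $\|\mathcal S_\gamma(\vec f_\infty)\chi_B\|_{L^p_w}$ separately, then take the weighted $L^q_\mu$ norm in $y_0$ and the supremum over $r$.

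For the local part, I would apply Theorem~\ref{strong} with the weight $w \in A_p$ directly:
\begin{equation*}
\big\|\mathcal S_\gamma(\vec f_0)\cdot\chi_B\big\|_{L^p_w}
\leq \big\|\mathcal S_\gamma(\vec f_0)\big\|_{L^p_w}
\leq C\bigg(\sum_{j=1}^\infty |f_j|^2\bigg)^{1/2}\bigg\|_{L^p_w(2B)}.
\end{equation*}
Then $w(B)^{1/\alpha - 1/p - 1/q}\|\mathcal S_\gamma(\vec f_0)\chi_B\|_{L^p_w} \le C\, w(2B)^{1/\alpha-1/p-1/q}\|\vec f\,\chi_{2B}\|_{L^p_w} \cdot \big(w(B)/w(2B)\big)^{1/\alpha-1/p-1/q}$, and since $1/\alpha - 1/p - 1/q < 0$ while $w(B)/w(2B) \le 1$ up to the doubling constant, this factor is bounded by a constant (using $w \in A_p \subset \Delta_2$). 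Taking $L^q_\mu$-norms in $y_0$, using $\mu \in \Delta_2$ to replace $2B$-centered quantities by $B$-centered ones after a change of variables, and the supremum over $r$, we get the local part controlled by $C\|(\sum_j|f_j|^2)^{1/2}\|_{(L^p,L^q)^\alpha(w;\mu)}$.

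The main obstacle is the tail term. Here I would exploit the standard pointwise estimate: for $x \in B$,
\begin{equation*}
\mathcal S_\gamma(f_j\chi_{(2B)^c})(x)
\leq C\sum_{k=1}^\infty \big(2^k r\big)^{-\gamma}\cdot\frac{1}{|2^{k+1}B|}\int_{2^{k+1}B}|f_j(z)|\,dz \cdot (2^k r)^{\gamma'} \cdots
\end{equation*}
— more precisely, using the cancellation and $\gamma$-Hölder continuity of $\varphi \in \mathcal C_\gamma$, one shows $\mathcal A_\gamma(f_j\chi_{(2B)^c})(y,t) \lesssim \sum_{k\ge1} 2^{-k\gamma}\frac{1}{|2^kB|}\int_{2^kB}|f_j|$ after integrating over the cone $\Gamma(x)$, so that
\begin{equation*}
\mathcal S_\gamma(\vec f_\infty)(x) \leq C\sum_{k=1}^\infty 2^{-k\gamma}\,\frac{1}{|2^kB|}\int_{2^kB}\bigg(\sum_{j=1}^\infty|f_j(z)|^2\bigg)^{1/2}dz.
\end{equation*}
Each average is then estimated by Hölder's inequality with exponent $p$ and the $A_p$ condition: $\frac{1}{|2^kB|}\int_{2^kB} g \le C\,\big(\frac{1}{w(2^kB)}\int_{2^kB}g^p w\big)^{1/p}$, which after inserting the amalgam normalization factors $w(2^kB)^{1/\alpha-1/p-1/q}$ produces $\big(|B|/|2^kB|\big)$-powers and $\big(w(B)/w(2^kB)\big)$-powers; combining the geometric decay $2^{-k\gamma}$ with the $A_\infty$ comparison \eqref{compare} (so that $w(B)/w(2^kB) \lesssim 2^{-k n\delta}$ for some $\delta>0$) gives a convergent series, uniformly in $r$. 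Finally, because the resulting bound on $\|\mathcal S_\gamma(\vec f_\infty)\chi_B\|_{L^p_w}$ after normalization is of the form $C\sum_k 2^{-k\epsilon}\|\vec f\,\chi_{2^kB}\|_{(L^p,L^q)^\alpha(w;\mu)\text{-type local piece}}$, one takes $L^q_\mu$ norms in $y_0$, uses $\mu \in \Delta_2$ together with \eqref{general weights}-style control to absorb the dilations $2^kB$ at the cost of a factor polynomial in $2^k$ that is still beaten by $2^{-k\epsilon}$ for $\gamma$-dependent $\epsilon>0$, and concludes. The delicate points to check are (i) the convergence of the tail series, which forces the strict inequality $\alpha < q$ to be used (guaranteeing $1/\alpha - 1/p - 1/q < 1/\alpha - 1/p \le 0$ with room to spare, and making the $L^q_\mu$ sum over dilates converge), and (ii) handling the $q = \infty$ endpoint separately, where the $L^q_\mu$ norm becomes an essential supremum and the argument simplifies.
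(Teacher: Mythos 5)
Your overall strategy mirrors the paper's exactly: fix $B=B(y,r)$, split $\vec f=\vec f_0+\vec f_\infty$ at $2B$, bound the local piece by Wilson's Theorem~\ref{strong} plus doubling, bound the tail by a pointwise annular estimate, and sum the dilates using the $A_\infty$ comparison \eqref{compare}. However, there is a concrete error in your tail estimate that is worth isolating, even though it turns out to be harmless here.

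You claim that the cancellation and $\gamma$-H\"older continuity of $\varphi$ yield a geometric factor $2^{-k\gamma}$ in the pointwise bound for $\mathcal S_\gamma(f_j\chi_{(2B)^c})(x)$, $x\in B$. This is not correct. The cancellation $\int\varphi=0$ and the modulus-of-continuity bound on $\varphi$ produce extra decay only when paired with a \emph{source term having vanishing mean}, via the difference $\varphi_t(y-z)-\varphi_t(y-c)$; that mechanism appears in the paper's Section~5, where the Calder\'on--Zygmund pieces $h_{ij}$ have cancellation and one gets $|\mathcal S_\gamma(h_{ij})(x)|\lesssim \ell(Q_i)^\gamma/|x-c_i|^{n+\gamma}$. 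Here $f_j^\infty=f_j\chi_{(2B)^c}$ has no cancellation, and the only information available is the support of $\varphi$ and the crude bound $|\varphi_t|\lesssim t^{-n}$. That gives, for $x\in B$ and $z\in B(y,2^{l+1}r)\setminus B(y,2^lr)$, the constraint $t\gtrsim 2^lr$ and, after integrating in $t$ over the cone, precisely the estimate in the paper (see \eqref{key estimate1}), namely $\mathcal S_\gamma(\vec f_\infty)(x)\lesssim \sum_l |2^{l+1}B|^{-1}\int_{2^{l+1}B}\|\vec f\|_{\ell^2}$ with \emph{no} $2^{-l\gamma}$ factor. In other words, the convergence of your series over dilates must come entirely from the weight comparison $w(B)/w(2^lB)\lesssim 2^{-ln\delta}$ raised to the positive power $1/\alpha-1/q$. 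You do invoke this, so the proof still closes; but as written your argument rests partly on a decay that is not actually there, and your parenthetical discussion of where $\alpha<q$ enters conflates the sign of $1/\alpha-1/p-1/q$ (used for doubling, as in \eqref{doubling1}) with the strict positivity of $1/\alpha-1/q$ (used for the series, as in \eqref{psi1}). Finally, your remark that $\mu\in\Delta_2$ is needed ``to replace $2B$-centered quantities by $B$-centered ones after a change of variables'' does not match the paper's mechanism: the paper only dilates the radius $r\mapsto 2^{l+1}r$, keeping the center $y$ fixed, and then takes the supremum over all radii, so no recentering in $y$ (and hence no use of $\Delta_2$ of $\mu$) is required at that step.

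\end{document}
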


\begin{theorem}\label{mainthm:2}
Let $0<\gamma\le1$. Assume that $p=1$, $1\leq\alpha<q\leq\infty$, $w\in A_1$ and $\mu\in\Delta_2$. Then there is a constant $C>0$ independent of $\vec{f}=(f_1,f_2,\ldots)$ such that
\begin{equation*}
\bigg\|\bigg(\sum_{j=1}^\infty\big|\mathcal S_{\gamma}(f_j)\big|^2\bigg)^{1/2}\bigg\|_{(WL^1,L^q)^{\alpha}(w;\mu)}
\leq C\bigg\|\bigg(\sum_{j=1}^\infty\big|f_j\big|^2\bigg)^{1/2}\bigg\|_{(L^1,L^q)^{\alpha}(w;\mu)}.
\end{equation*}
\end{theorem}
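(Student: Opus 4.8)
\textbf{Proof proposal for Theorem \ref{mainthm:2}.}

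The plan is to reduce the weak-type amalgam estimate to the known weighted weak $(1,1)$ bound for the vector-valued operator (Theorem \ref{weak}) by a standard localization argument: fix a ball $B=B(y,r)$, split $\vec{f}=\vec{f}\cdot\chi_{2B}+\vec{f}\cdot\chi_{(2B)^c}=:\vec{f}_0+\vec{f}_\infty$, and estimate the contributions of the local and global parts separately, uniformly in $r>0$ and $y\in\mathbb R^n$. Throughout, write $g(x):=\big(\sum_j|f_j(x)|^2\big)^{1/2}$ and recall the sublinearity $\mathcal S_\gamma(\vec f)\le \mathcal S_\gamma(\vec f_0)+\mathcal S_\gamma(\vec f_\infty)$ pointwise.

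First I would handle the local part. By Theorem \ref{weak} with the weight $w\in A_1$,
\begin{equation*}
\big\|\mathcal S_\gamma(\vec f_0)\cdot\chi_{B}\big\|_{WL^1_w}
\le \big\|\mathcal S_\gamma(\vec f_0)\big\|_{WL^1_w}
\le C\big\|g\cdot\chi_{2B}\big\|_{L^1_w},
\end{equation*}
so after multiplying by $w(B(y,r))^{1/\alpha-1-1/q}$ and taking the $L^q_\mu$ norm in $y$, the local term is controlled by
\begin{equation*}
\sup_{r>0}\Big\|w(B(y,r))^{1/\alpha-1-1/q}\,\big\|g\cdot\chi_{B(y,2r)}\big\|_{L^1_w}\Big\|_{L^q_\mu},
\end{equation*}
and a change of the ball radius $r\mapsto 2r$ together with the doubling property \eqref{weights} of $w\in A_1\subset\Delta_2$ shows this is $\le C\|g\|_{(L^1,L^q)^\alpha(w;\mu)}$. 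Here $\mu\in\Delta_2$ is not needed yet; it enters only to absorb the dilation of the outer variable if one prefers to rescale there.

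The main obstacle is the global part $\mathcal S_\gamma(\vec f_\infty)$. For $x\in B$ one estimates pointwise: using that each $\varphi\in\mathcal C_\gamma$ is supported in the unit ball, has mean zero, and is $\gamma$-Lipschitz, a standard computation gives for $x\in B$ and $z\in 2^{k+1}B\setminus 2^kB$ the kernel bound leading to
\begin{equation*}
\mathcal S_\gamma(f_j)(x)\cdot\chi_{B}(x)\le C\sum_{k=1}^\infty \frac{1}{|2^kB|}\int_{2^{k+1}B}|f_j(z)|\,dz,
\end{equation*}
uniformly in $x\in B$ (this reproduces the familiar pointwise domination of $\mathcal S_\gamma$ on a ball by a sum of averages, as in \cite{wilson1,wang}); then Minkowski's inequality in $\ell^2$ yields
\begin{equation*}
\mathcal S_\gamma(\vec f_\infty)(x)\cdot\chi_B(x)\le C\sum_{k=1}^\infty \frac{1}{|2^{k+1}B|}\int_{2^{k+1}B}g(z)\,dz.
\end{equation*}
Since the right side is a constant on $B$, its $WL^1_w(B)$ quasi-norm is just $w(B)$ times that constant; multiplying by $w(B(y,r))^{1/\alpha-1-1/q}$ we must show
\begin{equation*}
\sup_{r>0}\Big\|w(B(y,r))^{1/\alpha-1/q}\sum_{k\ge1}\frac{1}{|2^{k+1}B|}\int_{2^{k+1}B}g(z)\,dz\Big\|_{L^q_\mu}\le C\|g\|_{(L^1,L^q)^\alpha(w;\mu)}.
\end{equation*}
To do this I would, for each $k$, bound $\frac1{|2^{k+1}B|}\int_{2^{k+1}B}g\le C\frac{w(2^{k+1}B)}{|2^{k+1}B|}w(2^{k+1}B)^{-1}\int_{2^{k+1}B}g\,w\cdot(\text{factor})$ after inserting the weight via the $A_1$ condition $\frac1{|Q|}\int_Q g\le \frac1{|Q|}\int_Q gw\cdot\operatorname*{ess\,sup}_Q w^{-1}\le C\,\frac{w(2^{k+1}B)^{-1}}{\cdots}$; more cleanly, since $w\in A_1$ one has $\frac1{|Q|}\le \frac{C}{w(Q)}\operatorname*{ess\,inf}_Q w$, hence $\frac1{|2^{k+1}B|}\int_{2^{k+1}B}g\le \frac{C}{w(2^{k+1}B)}\int_{2^{k+1}B}g\,w$. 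Thus the inner sum is $\le C\sum_k w(2^{k+1}B)^{-1}\|g\chi_{2^{k+1}B}\|_{L^1_w}$. Now recognize $w(2^{k+1}B)^{-1}\|g\chi_{2^{k+1}B}\|_{L^1_w}= w(2^{k+1}B)^{-(1/\alpha-1/q)}\cdot\big[w(2^{k+1}B)^{1/\alpha-1-1/q}\|g\chi_{2^{k+1}B}\|_{L^1_w}\big]\cdot w(2^{k+1}B)^{?}$; matching exponents one sees the bracket is exactly the quantity appearing in $\|g\|_{(L^1,L^q)^\alpha(w;\mu)}$ at radius $2^{k+1}r$. The remaining scalar factor is $w(B(y,r))^{1/\alpha-1/q}\,w(2^{k+1}B(y,r))^{-(1/\alpha-1/q)}$, which by \eqref{compare} (applicable since $w\in A_1\subset A_\infty$ and $B\subset 2^{k+1}B$) is bounded by $C\big(|B|/|2^{k+1}B|\big)^{\delta(1/\alpha-1/q)}=C\,2^{-kn\delta(1/\alpha-1/q)}$, which is summable in $k$ because $\alpha<q$ forces $1/\alpha-1/q>0$. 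Finally, taking the $L^q_\mu$-norm in $y$, using Minkowski's integral inequality to pull the sum over $k$ outside, and noting that $\mu\in\Delta_2$ together with the substitution $r\mapsto 2^{k+1}r$ gives $\|\cdot\|_{L^q_\mu}$ at the dilated radius bounded by $C\|g\|_{(L^1,L^q)^\alpha(w;\mu)}$ uniformly in $k$ (this is precisely where $\mu\in\Delta_2$ is used, to control the change of variables in the outer integral), the geometric decay in $k$ closes the estimate. Combining the local and global bounds completes the proof.
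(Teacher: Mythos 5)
Your proposal is correct and follows essentially the same route as the paper: the same local/global splitting $\vec f=\vec f\chi_{2B}+\vec f\chi_{(2B)^c}$, the same use of the weighted weak $(1,1)$ bound (Theorem \ref{weak}) plus doubling for the local part, the same pointwise estimate \eqref{key estimate1} plus $A_1$ for the global part, and the same use of the $A_\infty$ comparison \eqref{compare} with exponent $\delta(1/\alpha-1/q)>0$ to sum the geometric tail. The only flaw is a small conceptual one: you assert that $\mu\in\Delta_2$ is ``precisely where'' a change of variables in the outer $y$-integral is controlled, but in fact no such change of variables ever occurs. The amalgam norm in Definition \ref{amalgam} is $\sup_{\rho>0}\|w(B(y,\rho))^{1/\alpha-1-1/q}\|f\chi_{B(y,\rho)}\|_{L^1_w}\|_{L^q_\mu}$, so replacing $r$ by $2^{k+1}r$ merely picks out a different value of $\rho$ in the supremum while the center $y$ of the ball remains identical to the $L^q_\mu$-integration variable; thus $\|w(B(y,2^{k+1}r))^{1/\alpha-1-1/q}\|g\chi_{B(y,2^{k+1}r)}\|_{L^1_w}\|_{L^q_\mu}\le\|g\|_{(L^1,L^q)^\alpha(w;\mu)}$ holds by definition, with no need for $\mu\in\Delta_2$. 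This does not affect the validity of your proof (the step goes through regardless), but it is worth noting that $\mu\in\Delta_2$ is in fact not invoked anywhere in this argument.
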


For the strong type estimate of vector-valued commutator (\ref{vectorvaluedc}) defined above on the weighted amalgam spaces, we will prove

\begin{theorem}\label{mainthm:3}
Let $0<\gamma\le1$ and $b\in BMO(\mathbb R^n)$. Assume that $1<p\leq\alpha<q\leq\infty$, $w\in A_p$ and $\mu\in\Delta_2$. Then there is a constant $C>0$ independent of $\vec{f}=(f_1,f_2,\ldots)$ such that
\begin{equation*}
\bigg\|\bigg(\sum_{j=1}^\infty\big|\big[b,\mathcal S_\gamma\big](f_j)\big|^2\bigg)^{1/2}\bigg\|_{(L^p,L^q)^{\alpha}(w;\mu)}
\leq C \bigg\|\bigg(\sum_{j=1}^\infty\big|f_j\big|^2\bigg)^{1/2}\bigg\|_{(L^p,L^q)^{\alpha}(w;\mu)}.
\end{equation*}
\end{theorem}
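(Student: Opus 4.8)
\textbf{Proof proposal for Theorem \ref{mainthm:3}.}

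The plan is to reduce the weighted amalgam estimate for the vector-valued commutator to a pointwise/local estimate controlled by the (known) boundedness of the vector-valued intrinsic square function $\mathcal S_\gamma$ together with a sharp maximal function bound, and then to combine this with Hölder-type arguments over the ball decomposition defining the amalgam norm. First I would fix a ball $B = B(x_0, r)$ and split each $f_j = f_j^0 + f_j^\infty$ with $f_j^0 = f_j \chi_{2B}$ and $f_j^\infty = f_j \chi_{(2B)^c}$, and similarly decompose the commutator in the standard way: for $x \in B$,
\begin{equation*}
\big[b,\mathcal S_\gamma\big](f_j)(x) \le \big|b(x) - b_{2B}\big|\cdot\mathcal S_\gamma(f_j)(x) + \mathcal S_\gamma\big((b - b_{2B})f_j^0\big)(x) + \mathcal S_\gamma\big((b - b_{2B})f_j^\infty\big)(x),
\end{equation*}
where $b_{2B}$ is the average of $b$ over $2B$. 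Taking $\ell^2$-norms in $j$ and using the triangle inequality in $\ell^2$, one bounds $\|[b,\mathcal S_\gamma](\vec f)\|_{L^p_w(B)}$ by three terms: $\mathrm{I}$ involving $|b - b_{2B}|\cdot\mathcal S_\gamma(\vec f)$, $\mathrm{II}$ involving $\mathcal S_\gamma$ of the local piece with the $BMO$ weight, and $\mathrm{III}$ involving the tail piece.

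For $\mathrm{I}$, I would use the $A_\infty$ (reverse Hölder) property of $w \in A_p$ to absorb a small power, apply Hölder's inequality with exponents chosen so that the $|b - b_{2B}|$ factor lands in a weighted John--Nirenberg space, giving a bound $C\|b\|_{BMO}\,w(B)^{\text{exponent}}\,\|\mathcal S_\gamma(\vec f)\|_{L^p_w(2B)}$; then Theorem \ref{strong} applied with the same $A_p$ weight converts this to $\|\vec f\|_{L^p_w(2B)}$ up to the amalgam-compatible power of $w(B)$. For $\mathrm{II}$, again by Theorem \ref{strong} (applied to the scalar-valued-in-$\ell^2$ operator $\mathcal S_\gamma$ acting on $(b-b_{2B})f_j$) one gets $\|(b - b_{2B})\vec f\|_{L^p_w(2B)}$, which is handled by the weighted generalized Hölder inequality \eqref{holder} (with the Young pair $L\log L$ / $\exp L$) against the $BMO$ bound on $\exp L$ averages, producing $C\|b\|_{BMO}\|\vec f\|_{L^p_w(2B')}$ on a slightly larger ball $B' = 4B$ (or with a harmless constant). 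For the tail $\mathrm{III}$, the crucial observation is the kernel estimate: for $x \in B$, $(y,t)\in\Gamma(x)$, and $z \in 2^{k+1}B \setminus 2^kB$ with $k \ge 1$, the support condition $|y - z| \le t$ forces $t \gtrsim 2^k r$, and exploiting the cancellation $\int \varphi = 0$ together with the $\gamma$-Hölder bound on $\varphi$ gives a decay factor $2^{-k\gamma}$ (times $2^{-kn}$ from the measure of the annulus relative to its weighted measure); summing the resulting geometric-type series in $k$ (the $BMO$ factor contributes an extra $k\|b\|_{BMO}$, still dominated since $\sum_k k\,2^{-k\gamma}\cdot 2^{-k\delta n} < \infty$ once we invoke \eqref{compare} for $w\in A_\infty$) yields $\mathrm{III} \le C\|b\|_{BMO}\cdot w(B)^{1/p}\cdot\big(\text{pointwise supremum over }k\text{ of averaged }\|\vec f\|\big)$, which one rewrites in terms of the amalgam norm.

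The final and most delicate step is to assemble these three local bounds into the amalgam norm: one must multiply by $w(B(y,r))^{1/\alpha - 1/p - 1/q}$, raise to the $q$-th power, integrate $d\mu(y)$, and take the supremum over $r > 0$, checking that each of the (dilated) balls $2B$, $4B$, $2^{k+1}B$ produced above can be reincorporated. For the enlarged-ball terms $\mathrm{I}$ and $\mathrm{II}$ this uses the doubling property \eqref{general weights} for $w \in A_p \subset \Delta_2$ to compare $w(2B)$ with $w(B)$, together with $\mu \in \Delta_2$ to compare $\mu$-integrals over $B$ and $2B$ after a change of variables $y \mapsto$ center; for the tail term $\mathrm{III}$ the annular sum must be carried through the $L^q_\mu$ norm using Minkowski's integral inequality in $\ell^2$-summed form and the fact that the geometric decay $2^{-k\gamma}$ beats the polynomial growth $w(2^{k+1}B)/w(B) \lesssim 2^{(k+1)n}$ only after the compensating power $w(2^{k+1}B)^{1/\alpha - 1/p - 1/q} / w(B)^{1/\alpha - 1/p - 1/q}$ is inserted --- here the condition $\alpha < q$ (strict) is what makes the exponent $1/\alpha - 1/p - 1/q$ sit in the range that keeps the series summable. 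I expect this bookkeeping in the tail, reconciling the $BMO$-induced extra factor of $k$, the weight-comparison exponents, and the $L^q_\mu$-integration simultaneously, to be the main obstacle; the rest follows the template already used to prove Theorems \ref{mainthm:1} and \ref{mainthm:2}.
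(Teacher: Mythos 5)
Your proposal takes a genuinely different route from the paper, and while the overall architecture (ball decomposition, local/tail split, geometric series controlled by \eqref{compare} with exponent $\delta(1/\alpha - 1/q) > 0$) matches the template, the local part contains a gap that is not easily fixable as stated.

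The paper handles the local piece $J_1$ by first establishing (as a separate Theorem in Section 4, via the Cauchy-integral/analyticity trick of Alvarez--Bagby--Kurtz--P\'erez and Ding--Lu--Yabuta) that the vector-valued commutator $[b,\mathcal S_\gamma]$ is itself bounded on $L^p_w$ for $w \in A_p$. This black-box is then applied directly to $\vec f^0 = \vec f\chi_{2B}$, and the remaining work is only on the tail, where the commutator is split as $|b(x) - b_B|\,\mathcal S_\gamma(f_j^\infty)(x) + \mathcal S_\gamma\big([b_B - b]f_j^\infty\big)(x)$. You instead avoid the preliminary commutator theorem and split into three terms with $b_{2B}$ as the subtracted constant, which means your first term $\mathrm{I}$ is $\big\||b(\cdot) - b_{2B}|\,\mathcal S_\gamma(\vec f)\big\|_{L^p_w(B)}$ --- note this involves the \emph{full} $\mathcal S_\gamma(\vec f)$, not just the tail.

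The difficulty with $\mathrm{I}$ is real: after H\"older with auxiliary exponents $r, r'$ you get
\begin{equation*}
\int_B |b - b_{2B}|^p\,|\mathcal S_\gamma(\vec f)|^p\,w\,dx \le \bigg(\int_B |b-b_{2B}|^{pr'}\,w\bigg)^{1/r'}\bigg(\int_B |\mathcal S_\gamma(\vec f)|^{pr}\,w\bigg)^{1/r},
\end{equation*}
and the first factor is indeed controlled by Lemma \ref{BMO}\,$(ii)$ applied with exponent $pr'$. But the second factor is an $L^{pr}_w$ norm of $\mathcal S_\gamma(\vec f)$, and the amalgam hypothesis only gives control on $L^p_w$ averages of $\vec f$ over balls, not on $L^{pr}_w$ averages. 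The reverse H\"older inequality you invoke acts on the weight $w$, not on $\mathcal S_\gamma(\vec f)$, so it does not supply the missing higher integrability of the function. (This is exactly why papers in the Morrey/amalgam literature either prove the weighted Lebesgue boundedness of the commutator first, as this paper does, or run a Fefferman--Stein sharp-maximal argument --- which you mention in your opening sentence but never actually execute.) The pointwise tail estimate you describe in term $\mathrm{III}$ is correct and essentially matches the paper's \eqref{key estimate2}, and the series bookkeeping with the extra $(l+1)$ factor from Lemma \ref{BMO}\,$(i)$ against the geometric decay from \eqref{compare} is exactly the paper's \eqref{psi3}. Term $\mathrm{II}$ could in principle be saved by the generalized H\"older inequality \eqref{Wholder} and the $\exp L$ bound on $b - b_{2B}$, but this still does not rescue $\mathrm{I}$.

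The shortest repair is to do what the paper does: apply the full commutator split $f_j = f_j^0 + f_j^\infty$ \emph{before} subtracting a constant from $b$, so that the local piece is exactly $[b,\mathcal S_\gamma](\vec f^0)$ and can be dispatched by the weighted Lebesgue commutator estimate, leaving the $b_B$-centered two-term decomposition only for the tail, where the pointwise kernel decay makes everything converge.
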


To obtain endpoint estimate for the vector-valued commutator (\ref{vectorvaluedc}), we first need to define the weighted $\mathcal A$-average of a function $f$ over a ball $B$ by means of the weighted Luxemburg norm; that is, given a Young function $\mathcal A$ and $w\in A_\infty$, we define (see \cite{rao,zhang})
\begin{equation*}
\big\|f\big\|_{\mathcal A(w),B}:=\inf\left\{\sigma>0:\frac{1}{w(B)}
\int_B\mathcal A\left(\frac{|f(x)|}{\sigma}\right)\cdot w(x)\,dx\leq1\right\}.
\end{equation*}
When $\mathcal A(t)=t$, this norm is denoted by $\|\cdot\|_{L(w),B}$, when $\mathcal A(t)=t\cdot(1+\log^+t)$ and $\log^+t=\max\{\log t,0\}$, this norm is also denoted by $\|\cdot\|_{L\log L(w),B}$. The complementary Young function of $t\cdot(1+\log^+t)$ is $\exp(t)-1$ with mean Luxemburg norm denoted by $\|\cdot\|_{\exp L(w),B}$. For $w\in A_\infty$ and for every ball $B$ in $\mathbb R^n$, we can also show the weighted version of \eqref{holder}. Namely, the following generalized H\"older's inequality in the weighted setting
\begin{equation}\label{Wholder}
\frac{1}{w(B)}\int_B|f(x)\cdot g(x)|w(x)\,dx\leq C\big\|f\big\|_{L\log L(w),B}\big\|g\big\|_{\exp L(w),B}
\end{equation}
is valid (see \cite{zhang} for instance). Now we introduce new weighted spaces of $L\log L$ type as follows.

\begin{defn}
Let $p=1$, $1\leq\alpha\leq q\leq\infty$, and let $w,\mu$ be two weights on $\mathbb R^n$. We denote by $(L\log L,L^q)^{\alpha}(w;\mu)$ the weighted amalgam space of $L\log L$ type, the space of all locally integrable functions $f$ defined on $\mathbb R^n$ with finite norm $\big\|f\big\|_{(L\log L,L^q)^{\alpha}(w;\mu)}$.
\begin{equation*}
(L\log L,L^q)^{\alpha}(w;\mu):=\left\{f\in L^1_{loc}(w):\big\|f\big\|_{(L\log L,L^q)^{\alpha}(w;\mu)}<\infty\right\},
\end{equation*}
where
\begin{equation*}
\begin{split}
\big\|f\big\|_{(L\log L,L^q)^{\alpha}(w;\mu)}
:=&\sup_{r>0}\left\{\int_{\mathbb R^n}\Big[w(B(y,r))^{1/{\alpha}-1/q}\big\|f\big\|_{L\log L(w),B(y,r)}\Big]^q\mu(y)\,dy\right\}^{1/q}\\
=&\sup_{r>0}\Big\|w(B(y,r))^{1/{\alpha}-1/q}\big\|f\big\|_{L\log L(w),B(y,r)}\Big\|_{L^q_{\mu}}.
\end{split}
\end{equation*}
\end{defn}

Note that $t\leq t\cdot(1+\log^+t)$ for all $t>0$, then for any ball $B(y,r)\subset\mathbb R^n$ and $w\in A_\infty$, we have $\big\|f\big\|_{L(w),B(y,r)}\leq \big\|f\big\|_{L\log L(w),B(y,r)}$ by definition, i.e., the inequality
\begin{equation}\label{main esti1}
\big\|f\big\|_{L(w),B(y,r)}=\frac{1}{w(B(y,r))}\int_{B(y,r)}|f(x)|\cdot w(x)\,dx\leq\big\|f\big\|_{L\log L(w),B(y,r)}
\end{equation}
holds for any ball $B(y,r)\subset\mathbb R^n$.  Hence, for $1\leq\alpha\leq q\leq\infty$, we can further see the following inclusion:
\begin{equation*}
(L\log L,L^q)^{\alpha}(w;\mu)\subset (L^1,L^q)^{\alpha}(w;\mu).
\end{equation*}
For the endpoint estimate of commutators generated by $BMO(\mathbb R^n)$ function and vector-valued intrinsic square function, we will also prove the following weak-type $L\log L$ inequality in the context of weighted amalgam spaces.

\begin{theorem}\label{mainthm:4}
Let $0<\gamma\leq1$ and $b\in BMO(\mathbb R^n)$. Assume that $p=1$, $1\leq\alpha<q\leq\infty$, $w\in A_1$ and $\mu\in\Delta_2$, then for any given $\sigma>0$ and any ball $B(y,r)\subset\mathbb R^n$ with $y\in\mathbb R^n$, $r>0$, there is a constant $C>0$ independent of $\vec{f}=(f_1,f_2,\ldots)$, $B(y,r)$ and $\sigma>0$ such that
\begin{equation*}
\begin{split}
&\Bigg\|w(B(y,r))^{1/{\alpha}-1-1/q}\cdot
w\bigg(\bigg\{x\in B(y,r):\bigg(\sum_{j=1}^\infty\big|\big[b,\mathcal S_{\gamma}\big](f_j)(x)\big|^2\bigg)^{1/2}>\sigma\bigg\}\bigg)\Bigg\|_{L^q_{\mu}}\\
&\leq C\cdot\bigg\|\Phi\bigg(\frac{\|\vec{f}(\cdot)\|_{\ell^2}}{\sigma}\bigg)\bigg\|_{(L\log L,L^q)^{\alpha}(w;\mu)},
\end{split}
\end{equation*}
where
\begin{equation*}
\Phi(t)=t\cdot(1+\log^+t),\qquad \big\|\vec{f}(x)\big\|_{\ell^2}=\bigg(\sum_{j=1}^\infty|f_j(x)|^2\bigg)^{1/2},
\end{equation*}
and the norm $\|\cdot\|_{L^q_{\mu}}$ is taken with respect to the variable $y$, i.e.,
\begin{equation*}
\begin{split}
&\Bigg\|w(B(y,r))^{1/{\alpha}-1-1/q}\cdot
w\bigg(\bigg\{x\in B(y,r):\bigg(\sum_{j=1}^\infty\big|\big[b,\mathcal S_{\gamma}\big](f_j)(x)\big|^2\bigg)^{1/2}>\sigma\bigg\}\bigg)\Bigg\|_{L^q_{\mu}}\\
=&\left\{\int_{\mathbb R^n}\bigg[w(B(y,r))^{1/{\alpha}-1-1/q}\cdot w\bigg(\bigg\{x\in B(y,r):\bigg(\sum_{j=1}^\infty\big|\big[b,\mathcal S_{\gamma}\big](f_j)(x)\big|^2\bigg)^{1/2}>\sigma\bigg\}\bigg)\bigg]^q\mu(y)\,dy\right\}^{1/q}.
\end{split}
\end{equation*}
\end{theorem}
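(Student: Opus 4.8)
The plan is to fix $\sigma>0$ and an arbitrary ball $B=B(y,r)$, and (after replacing each $f_j$ by $f_j/\sigma$) to assume $\sigma=1$. Split each $f_j$ as $f_j=f_j^0+f_j^\infty$ with $f_j^0=f_j\chi_{2B}$ and $f_j^\infty=f_j\chi_{(2B)^c}$. Since $\mathcal S_\gamma$ is sublinear and $\|\cdot\|_{\ell^2}$ is a norm, one has $\|[b,\mathcal S_\gamma](\vec f)(x)\|_{\ell^2}\le\|[b,\mathcal S_\gamma](\vec f^0)(x)\|_{\ell^2}+\|[b,\mathcal S_\gamma](\vec f^\infty)(x)\|_{\ell^2}$, so $\{x\in B:\|[b,\mathcal S_\gamma](\vec f)(x)\|_{\ell^2}>1\}\subset E_1\cup E_2$, where $E_1=\{x\in B:\|[b,\mathcal S_\gamma](\vec f^0)(x)\|_{\ell^2}>1/2\}$ and $E_2=\{x\in B:\|[b,\mathcal S_\gamma](\vec f^\infty)(x)\|_{\ell^2}>1/2\}$. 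It then suffices to bound $w(E_1)$ and $w(E_2)$, multiply by $w(B(y,r))^{1/\alpha-1-1/q}$, and take the $L^q_\mu$-norm in $y$ of each piece.

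For the local piece I would invoke the weighted weak-type $L\log L$ estimate for the vector-valued commutator on the weighted Lebesgue space $L^1_w$ (which is also among the results of this paper): it gives $w(E_1)\lesssim\int_{\mathbb R^n}\Phi\big(2\|\vec f^0(x)\|_{\ell^2}\big)w(x)\,dx\lesssim\int_{2B}\Phi\big(\|\vec f(x)\|_{\ell^2}\big)w(x)\,dx$, using $\Phi(2t)\le C\Phi(t)$ and $t\le\Phi(t)$. Rewriting the last integral as $w(2B)\,\|\Phi(\|\vec f(\cdot)\|_{\ell^2})\|_{L(w),2B}$ and applying \eqref{main esti1} yields $w(E_1)\lesssim w(2B)\,\|\Phi(\|\vec f(\cdot)\|_{\ell^2})\|_{L\log L(w),2B}$. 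Since $w\in A_1\subset\Delta_2$ we have $w(2B)\le Cw(B)$, and since $\alpha<q$ the exponent $1/\alpha-1/q$ is positive, so $w(B)^{1/\alpha-1/q}\le w(2B)^{1/\alpha-1/q}$; combining these gives $w(B(y,r))^{1/\alpha-1-1/q}w(E_1)\lesssim w(B(y,2r))^{1/\alpha-1/q}\|\Phi(\|\vec f(\cdot)\|_{\ell^2})\|_{L\log L(w),B(y,2r)}$. Taking the $L^q_\mu$-norm in $y$ and recognizing the right-hand side as the quantity defining $\|\Phi(\|\vec f(\cdot)\|_{\ell^2})\|_{(L\log L,L^q)^\alpha(w;\mu)}$ at the scale $2r$ (hence $\le$ that norm) disposes of the $E_1$ contribution.

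For the global piece, Chebyshev's inequality gives $w(E_2)\le 2\int_B\|[b,\mathcal S_\gamma](\vec f^\infty)(x)\|_{\ell^2}\,w(x)\,dx$, so the heart of the matter is a pointwise tail estimate. Using the crude bound $|\varphi_t|\le Ct^{-n}$ valid for $\varphi\in\mathcal C_\gamma$, together with Minkowski's integral inequality in the $\ell^2$ variable, one dominates $\|[b,\mathcal S_\gamma](\vec f^\infty)(x)\|_{\ell^2}$ by $C|b(x)-b_{2B}|\,T\big(g\chi_{(2B)^c}\big)(x)+C\,T\big(|b-b_{2B}|\,g\chi_{(2B)^c}\big)(x)$, where $g=\|\vec f(\cdot)\|_{\ell^2}$ and $T(h)(x):=\big(\iint_{\Gamma(x)}t^{-2n}\big[\int_{|y-z|<t}|h(z)|\,dz\big]^2\tfrac{dy\,dt}{t^{n+1}}\big)^{1/2}$. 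A direct computation on the cone $\Gamma(x)$, decomposing $(2B)^c$ into the dyadic annuli $2^{k+1}B\setminus 2^kB$ and noting that for $x\in B$ the relevant $t$ satisfies $t\gtrsim 2^kr$, yields $T\big(h\chi_{2^{k+1}B\setminus 2^kB}\big)(x)\lesssim\frac1{|2^{k+1}B|}\int_{2^{k+1}B}|h|$. Combining this with $|b(z)-b_{2B}|\le|b(z)-b_{2^{k+1}B}|+|b_{2^{k+1}B}-b_{2B}|$, $|b_{2^{k+1}B}-b_{2B}|\lesssim(k+1)\|b\|_{BMO}$, the passage from unweighted to weighted averages over $2^{k+1}B$ (valid since $w\in A_1$), the weighted generalized H\"older inequality \eqref{Wholder} with $\|b-b_Q\|_{\exp L(w),Q}\lesssim\|b\|_{BMO}$ (a standard fact for $w\in A_\infty$), and the monotonicity $\|h\|_{L\log L(w),Q}\le\|\Phi(h)\|_{L\log L(w),Q}$ (which holds since $0\le h\le\Phi(h)$), one obtains $\|[b,\mathcal S_\gamma](\vec f^\infty)(x)\|_{\ell^2}\lesssim\sum_{k=1}^\infty\big((k+2)\|b\|_{BMO}+|b(x)-b_{2B}|\big)\|\Phi(g)\|_{L\log L(w),2^{k+1}B}$ for $x\in B$. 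Integrating over $B$ against $w\,dx$ and using $\int_B|b(x)-b_{2B}|w(x)\,dx\lesssim\|b\|_{BMO}\,w(2B)$ then gives $w(E_2)\lesssim w(2B)\sum_{k=1}^\infty(k+2)\,\|\Phi(g)\|_{L\log L(w),2^{k+1}B}$.

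To finish, I would multiply by $w(B(y,r))^{1/\alpha-1-1/q}$, use $w(2B)\le Cw(B)$ ($w\in\Delta_2$), and then apply the reverse-doubling estimate \eqref{compare} to the inclusion $B(y,r)\subset B(y,2^{k+1}r)$ in the form $w(B(y,r))\le C\,2^{-(k+1)n\delta}w(B(y,2^{k+1}r))$, raised to the positive power $1/\alpha-1/q$; this produces a geometric factor $2^{-(k+1)\epsilon}$ with $\epsilon=n\delta(1/\alpha-1/q)>0$. This is precisely where the strict inequality $\alpha<q$ enters and what makes $\sum_{k}(k+2)2^{-(k+1)\epsilon}$ converge. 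Taking the $L^q_\mu$-norm in $y$, applying the triangle inequality for $L^q_\mu$, and bounding each summand by the quantity defining $\|\Phi(g)\|_{(L\log L,L^q)^\alpha(w;\mu)}$ at scale $2^{k+1}r$ gives $\big\|w(B(y,r))^{1/\alpha-1-1/q}w(E_2)\big\|_{L^q_\mu}\lesssim\|\Phi(\|\vec f(\cdot)\|_{\ell^2})\|_{(L\log L,L^q)^\alpha(w;\mu)}$. Adding the $E_1$ and $E_2$ contributions and undoing the normalization $\sigma=1$ yields the theorem; the case $q=\infty$ is identical with the $L^q_\mu$-integral replaced by an essential supremum. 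The main obstacle is the pointwise tail estimate for the vector-valued commutator — simultaneously handling the supremum over $\mathcal C_\gamma$ and the $\ell^2$-aggregation through Minkowski's integral inequality, carrying out the geometric cone computation for $T(h\chi_{2^{k+1}B\setminus 2^kB})$, and separating the two kinds of $BMO$ contributions — while the subsequent bookkeeping with the weights $w$, $\mu$, the logarithmic factors $k$, and the decay from \eqref{compare} across dyadic scales, though delicate, is routine.
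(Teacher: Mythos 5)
Your proposal follows essentially the same route as the paper's own proof: the same $f_j = f_j^0 + f_j^\infty$ split, the same reduction of the local piece to the weighted weak-type $L\log L$ bound on $L^1_w$ (the paper's Theorem 5.1), the same Chebyshev reduction of the tail piece followed by the crude kernel bound $|\varphi_t|\lesssim t^{-n}$, the cone geometry over dyadic annuli, Lemma 4.1, the weighted H\"older inequality \eqref{Wholder} with $\|b-b_Q\|_{\exp L(w),Q}\lesssim\|b\|_*$, and the $A_\infty$ comparison \eqref{compare} (with $\alpha<q$) to make $\sum_k (k+2)2^{-k\epsilon}$ converge. The only cosmetic differences are the normalization $\sigma=1$, centering at $b_{2B}$ rather than $b_{B}$, and phrasing the $\ell^2$-aggregation via Minkowski's integral inequality instead of the paper's duality/Cauchy--Schwarz step — these are equivalent.
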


\newtheorem{rem}{Remark}[section]

\begin{rem}
From the above definitions and Theorem $\ref{mainthm:4}$, we can roughly say that the vector-valued commutator $(\ref{vectorvaluedc})$ is bounded from $(L\log L,L^q)^{\alpha}(w;\mu)$ into $(WL^1,L^q)^{\alpha}(w;\mu)$ whenever $1\leq\alpha<q\leq\infty$, $w\in A_1$ and $\mu\in\Delta_2$.
\end{rem}

\section{Proofs of Theorems \ref{mainthm:1} and \ref{mainthm:2}}
\begin{proof}[Proof of Theorem $\ref{mainthm:1}$]
Let $1<p\leq\alpha<q\leq\infty$ and $\big(\sum_{j=1}^\infty|f_j|^2\big)^{1/2}\in(L^p,L^q)^{\alpha}(w;\mu)$ with $w\in A_p$ and $\mu\in\Delta_2$. For an arbitrary point $y\in\mathbb R^n$ and $r>0$, we set $B=B(y,r)$ for the ball centered at $y$ and of radius $r$, $2B=B(y,2r)$. We represent $f_j$ as
\begin{equation*}
f_j=f_j\cdot\chi_{2B}+f_j\cdot\chi_{(2B)^c}:=f^0_j+f^\infty_j,
\end{equation*}
where $\chi_{2B}$ denotes the characteristic function of $2B=B(y,2r)$, $j=1,2,\ldots$. Then we write
\begin{align}\label{I}
&w(B(y,r))^{1/{\alpha}-1/p-1/q}\bigg\|\bigg(\sum_{j=1}^\infty\big|\mathcal S_{\gamma}(f_j)\big|^2\bigg)^{1/2}\cdot\chi_{B(y,r)}\bigg\|_{L^p_w}\notag\\
&=w(B(y,r))^{1/{\alpha}-1/p-1/q}\bigg(\int_{B(y,r)}\bigg(\sum_{j=1}^\infty\big|\mathcal S_{\gamma}(f_j)(x)\big|^2\bigg)^{p/2}w(x)\,dx\bigg)^{1/p}\notag\\
&\leq w(B(y,r))^{1/{\alpha}-1/p-1/q}\bigg(\int_{B(y,r)}\bigg(\sum_{j=1}^\infty\big|\mathcal S_{\gamma}(f^0_j)(x)\big|^2\bigg)^{p/2}w(x)\,dx\bigg)^{1/p}\notag\\
&+w(B(y,r))^{1/{\alpha}-1/p-1/q}\bigg(\int_{B(y,r)}\bigg(\sum_{j=1}^\infty\big|\mathcal S_{\gamma}(f^\infty_j)(x)\big|^2\bigg)^{p/2}w(x)\,dx\bigg)^{1/p}\notag\\
&:=I_1(y,r)+I_2(y,r).
\end{align}
Below we will give the estimates of $I_1(y,r)$ and $I_2(y,r)$, respectively. By the weighted $L^p$ boundedness of vector-valued intrinsic square function (see Theorem \ref{strong}), we have
\begin{align}\label{I1}
I_1(y,r)&\leq w(B(y,r))^{1/{\alpha}-1/p-1/q}\bigg\|\bigg(\sum_{j=1}^\infty\big|\mathcal S_{\gamma}(f^0_j)\big|^2\bigg)^{1/2}\bigg\|_{L^p_w}\notag\\
&\leq C\cdot w(B(y,r))^{1/{\alpha}-1/p-1/q}
\bigg(\int_{B(y,2r)}\bigg(\sum_{j=1}^\infty\big|f_j(x)\big|^2\bigg)^{p/2}w(x)\,dx\bigg)^{1/p}\notag\\
&=C\cdot w(B(y,2r))^{1/{\alpha}-1/p-1/q}\bigg\|\bigg(\sum_{j=1}^\infty\big|f_j\big|^2\bigg)^{1/2}\cdot\chi_{B(y,2r)}\bigg\|_{L^p_w}\notag\\
&\times\frac{w(B(y,r))^{1/{\alpha}-1/p-1/q}}{w(B(y,2r))^{1/{\alpha}-1/p-1/q}}.
\end{align}
Moreover, since $1/{\alpha}-1/p-1/q<0$ and $w\in A_p$ with $1<p<\infty$, then by doubling inequality (\ref{weights}), we obtain
\begin{equation}\label{doubling1}
\frac{w(B(y,r))^{1/{\alpha}-1/p-1/q}}{w(B(y,2r))^{1/{\alpha}-1/p-1/q}}\leq C.
\end{equation}
Substituting the above inequality \eqref{doubling1} into \eqref{I1} yields the inequality,
\begin{equation}\label{I1yr}
I_1(y,r)\leq C\cdot w(B(y,2r))^{1/{\alpha}-1/p-1/q}\bigg\|\bigg(\sum_{j=1}^\infty\big|f_j\big|^2\bigg)^{1/2}\cdot\chi_{B(y,2r)}\bigg\|_{L^p_w}.
\end{equation}
As for the second term $I_2(y,r)$, for any given $\varphi\in{\mathcal C}_\gamma$, $0<\gamma\le1$, $j=1,2,\ldots$, and $(\xi,t)\in\Gamma(x)$ with $x\in B(y,r)$, we have
\begin{align}\label{Key1}
\bigg|\int_{\mathbb R^n}\varphi_t(\xi-z)f^\infty_j(z)\,dz\bigg|
&=\bigg|\int_{B(y,2r)^c}\varphi_t(\xi-z)f_j(z)\,dz\bigg|\notag\\
&\leq C\cdot t^{-n}\int_{B(y,2r)^c\cap\{z:|\xi-z|\le t\}}\big|f_j(z)\big|\,dz\notag\\
&\leq C\cdot t^{-n}\sum_{l=1}^\infty\int_{B(y,2^{l+1}r)\backslash B(y,2^{l}r)\cap\{z:|\xi-z|\le t\}}\big|f_j(z)\big|\,dz.
\end{align}
Since $|\xi-z|\le t$ and $(\xi,t)\in\Gamma(x)$, then one has $|x-z|\leq|x-\xi|+|\xi-z|\leq 2t$. Hence, for any $x\in B(y,r)$ and $z\in B(y,2^{l+1}r)\backslash B(y,2^{l}r)$, a direct computation shows that
\begin{equation}\label{Key2}
2t\geq |x-z|\geq |z-y|-|x-y|\geq 2^{l-1}r.
\end{equation}
Therefore, by using the inequalities \eqref{Key1} and \eqref{Key2} derived above, together with Minkowski's inequality for integrals, we can deduce that
\begin{equation*}
\begin{split}
\mathcal S_\gamma(f^\infty_j)(x)&=\left(\iint_{\Gamma(x)}\sup_{\varphi\in{\mathcal C}_\gamma}\bigg|\int_{\mathbb R^n} \varphi_t(\xi-z)f^\infty_j(z)\,dz\bigg|^2\frac{d\xi dt}{t^{n+1}}\right)^{1/2}\\
&\leq C\left(\int_{2^{l-2}r}^\infty\int_{|x-\xi|<t}\bigg|t^{-n}\sum_{l=1}^\infty\int_{B(y,2^{l+1}r)\backslash B(y,2^{l}r)} \big|f_j(z)\big|\,dz\bigg|^2\frac{d\xi dt}{t^{n+1}}\right)^{1/2}\\
&\le C\left(\sum_{l=1}^\infty\int_{B(y,2^{l+1}r)\backslash B(y,2^{l}r)}\big|f_j(z)\big|\,dz\right)
\left(\int_{2^{l-2}r}^\infty\frac{dt}{t^{2n+1}}\right)^{1/2}\\
&\leq C\sum_{l=1}^\infty\frac{1}{|B(y,2^{l+1}r)|}\int_{B(y,2^{l+1}r)\backslash B(y,2^{l}r)}\big|f_j(z)\big|\,dz.
\end{split}
\end{equation*}
Then by duality and Cauchy--Schwarz inequality, we get the following pointwise estimate for any $x\in B(y,r)$.
\begin{align}\label{key estimate1}
\bigg(\sum_{j=1}^\infty\big|\mathcal S_\gamma(f^\infty_j)(x)\big|^2\bigg)^{1/2}
&\leq C\Bigg(\sum_{j=1}^\infty\bigg|\sum_{l=1}^\infty\frac{1}{|B(y,2^{l+1}r)|}
\int_{B(y,2^{l+1}r)\backslash B(y,2^{l}r)}\big|f_j(z)\big|\,dz\bigg|^2\Bigg)^{1/2}\notag\\
&=C\sup_{(\sum_{j=1}^\infty|\zeta_j|^2)^{1/2}\leq1}\sum_{j=1}^\infty\bigg(\sum_{l=1}^\infty\frac{1}{|B(y,2^{l+1}r)|}
\int_{B(y,2^{l+1}r)}\big|f_j(z)\big|\,dz\cdot\zeta_j\bigg)\notag\\
&\leq C\sum_{l=1}^\infty\frac{1}{|B(y,2^{l+1}r)|}\int_{B(y,2^{l+1}r)}\sup_{(\sum_{j=1}^\infty|\zeta_j|^2)^{1/2}\leq1}
\bigg(\sum_{j=1}^\infty\big|f_j(z)\big|\cdot\zeta_j\bigg)dz\notag\\
&\leq C\sum_{l=1}^\infty\frac{1}{|B(y,2^{l+1}r)|}\int_{B(y,2^{l+1}r)}\bigg(\sum_{j=1}^\infty\big|f_j(z)\big|^2\bigg)^{1/2}dz.
\end{align}
From this pointwise estimate, it follows that
\begin{equation*}
I_2(y,r)\leq C\cdot w(B(y,r))^{1/{\alpha}-1/q}
\sum_{l=1}^\infty\frac{1}{|B(y,2^{l+1}r)|}\int_{B(y,2^{l+1}r)}\bigg(\sum_{j=1}^\infty\big|f_j(z)\big|^2\bigg)^{1/2}dz.
\end{equation*}
Applying H\"older's inequality and $A_p$ condition on $w$, we get
\begin{equation*}
\begin{split}
&\frac{1}{|B(y,2^{l+1}r)|}\int_{B(y,2^{l+1}r)}\bigg(\sum_{j=1}^\infty\big|f_j(z)\big|^2\bigg)^{1/2}dz\\
&\leq\frac{1}{|B(y,2^{l+1}r)|}\bigg(\int_{B(y,2^{l+1}r)}\bigg(\sum_{j=1}^\infty\big|f_j(z)\big|^2\bigg)^{p/2}w(z)\,dz\bigg)^{1/p}
\left(\int_{B(y,2^{l+1}r)}w(z)^{-{p'}/p}\,dz\right)^{1/{p'}}\\
&\leq C\bigg(\int_{B(y,2^{l+1}r)}\bigg(\sum_{j=1}^\infty\big|f_j(z)\big|^2\bigg)^{p/2}w(z)\,dz\bigg)^{1/p}\cdot w\big(B(y,2^{l+1}r)\big)^{-1/p}.
\end{split}
\end{equation*}
Hence,
\begin{equation}\label{I2yr}
\begin{split}
I_2(y,r)&\leq C\cdot w(B(y,r))^{1/{\alpha}-1/q}\\
&\times\sum_{l=1}^\infty\bigg(\int_{B(y,2^{l+1}r)}\bigg(\sum_{j=1}^\infty\big|f_j(z)\big|^2\bigg)^{p/2}w(z)\,dz\bigg)^{1/p}\cdot w\big(B(y,2^{l+1}r)\big)^{-1/p}\\
&=C\sum_{l=1}^\infty w(B(y,2^{l+1}r))^{1/{\alpha}-1/p-1/q}\bigg\|\bigg(\sum_{j=1}^\infty\big|f_j\big|^2\bigg)^{1/2}\cdot\chi_{B(y,2^{l+1}r)}\bigg\|_{L^p_w}\\
&\times\frac{w(B(y,r))^{1/{\alpha}-1/q}}{w(B(y,2^{l+1}r))^{1/{\alpha}-1/q}}.
\end{split}
\end{equation}
Notice that $w\in A_p\subset A_\infty$ for $1\leq p<\infty$, then by using inequality (\ref{compare}) with exponent $\delta>0$, we can see that
\begin{align}\label{psi1}
\sum_{l=1}^\infty\frac{w(B(y,r))^{1/{\alpha}-1/q}}{w(B(y,2^{l+1}r))^{1/{\alpha}-1/q}}
&\leq C\sum_{l=1}^\infty\left(\frac{|B(y,r)|}{|B(y,2^{l+1}r)|}\right)^{\delta(1/{\alpha}-1/q)}\notag\\
&= C\sum_{l=1}^\infty\left(\frac{1}{2^{(l+1)n}}\right)^{\delta(1/{\alpha}-1/q)}\notag\\
&\leq C.
\end{align}
Here the exponent $\delta(1/{\alpha}-1/q)$ is positive by the assumption $\alpha<q$, which guarantees that the last series is convergent. Therefore by taking the $L^q_{\mu}$-norm of both sides of \eqref{I}(with respect to the variable $y$), and then using Minkowski's inequality, \eqref{I1yr}, \eqref{I2yr} and \eqref{psi1}, we obtain
\begin{equation*}
\begin{split}
&\bigg\|w(B(y,r))^{1/{\alpha}-1/p-1/q}\bigg\|\bigg(\sum_{j=1}^\infty\big|\mathcal S_{\gamma}(f_j)\big|^2\bigg)^{1/2}\cdot\chi_{B(y,r)}\bigg\|_{L^p_w}\bigg\|_{L^q_{\mu}}\\
&\leq\big\|I_1(y,r)\big\|_{L^q_{\mu}}+\big\|I_2(y,r)\big\|_{L^q_{\mu}}\\
&\leq C\bigg\|w(B(y,2r))^{1/{\alpha}-1/p-1/q}\bigg\|\bigg(\sum_{j=1}^\infty\big|\mathcal S_{\gamma}(f_j)\big|^2\bigg)^{1/2}\cdot\chi_{B(y,2r)}\bigg\|_{L^p_w}\bigg\|_{L^q_{\mu}}\\
&+C\sum_{l=1}^\infty\bigg\|w(B(y,2^{l+1}r))^{1/{\alpha}-1/p-1/q}\bigg\|\bigg(\sum_{j=1}^\infty\big|\mathcal S_{\gamma}(f_j)\big|^2\bigg)^{1/2}\cdot\chi_{B(y,2^{l+1}r)}\bigg\|_{L^p_w}\bigg\|_{L^q_{\mu}}\\
&\times\frac{w(B(y,r))^{1/{\alpha}-1/q}}{w(B(y,2^{l+1}r))^{1/{\alpha}-1/q}}\\
&\leq C\bigg\|\bigg(\sum_{j=1}^\infty\big|f_j\big|^2\bigg)^{1/2}\bigg\|_{(L^p,L^q)^{\alpha}(w;\mu)}
+C\bigg\|\bigg(\sum_{j=1}^\infty\big|f_j\big|^2\bigg)^{1/2}\bigg\|_{(L^p,L^q)^{\alpha}(w;\mu)}\\
&\times\sum_{l=1}^\infty\frac{w(B(y,r))^{1/{\alpha}-1/q}}{w(B(y,2^{l+1}r))^{1/{\alpha}-1/q}}\\
&\leq C\bigg\|\bigg(\sum_{j=1}^\infty\big|f_j\big|^2\bigg)^{1/2}\bigg\|_{(L^p,L^q)^{\alpha}(w;\mu)}.
\end{split}
\end{equation*}
Thus, by taking the supremum over all $r>0$, we complete the proof of Theorem \ref{mainthm:1}.
\end{proof}

\begin{proof}[Proof of Theorem $\ref{mainthm:2}$]
Let $p=1$, $1\leq\alpha<q\leq\infty$ and $\big(\sum_{j=1}^\infty|f_j|^2\big)^{1/2}\in(L^1,L^q)^{\alpha}(w;\mu)$ with $w\in A_1$ and $\mu\in\Delta_2$. For an arbitrary ball $B=B(y,r)\subset\mathbb R^n$ with $y\in\mathbb R^n$ and $r>0$, we represent $f_j$ as
\begin{equation*}
f_j=f_j\cdot\chi_{2B}+f_j\cdot\chi_{(2B)^c}:=f^0_j+f^\infty_j,\quad j=1,2,\ldots;
\end{equation*}
then one can write
\begin{align}\label{Iprime}
&w(B(y,r))^{1/{\alpha}-1-1/q}\bigg\|\bigg(\sum_{j=1}^\infty\big|\mathcal S_{\gamma}(f_j)\big|^2\bigg)^{1/2}\cdot\chi_{B(y,r)}\bigg\|_{WL^1_w}\notag\\
&\leq 2\cdot w(B(y,r))^{1/{\alpha}-1-1/q}\bigg\|\bigg(\sum_{j=1}^\infty\big|\mathcal S_{\gamma}(f^0_j)\big|^2\bigg)^{1/2}\cdot\chi_{B(y,r)}\bigg\|_{WL^1_w}\notag\\
&+2\cdot w(B(y,r))^{1/{\alpha}-1-1/q}\bigg\|\bigg(\sum_{j=1}^\infty\big|\mathcal S_{\gamma}(f^\infty_j)\big|^2\bigg)^{1/2}\cdot\chi_{B(y,r)}\bigg\|_{WL^1_w}\notag\\
&:=I'_1(y,r)+I'_2(y,r).
\end{align}
Let us first consider the term $I'_1(y,r)$. By the weighted weak $(1,1)$ boundedness of vector-valued intrinsic square function (see Theorem \ref{weak}), we get
\begin{align}\label{I1prime}
I'_1(y,r)&\leq 2\cdot w(B(y,r))^{1/{\alpha}-1-1/q}\bigg\|\bigg(\sum_{j=1}^\infty\big|\mathcal S_{\gamma}(f^0_j)\big|^2\bigg)^{1/2}\bigg\|_{WL^1_w}\notag\\
&\leq C\cdot w(B(y,r))^{1/{\alpha}-1-1/q}
\bigg(\int_{B(y,2r)}\bigg(\sum_{j=1}^\infty\big|f_j(x)\big|^2\bigg)^{1/2}w(x)\,dx\bigg)\notag\\
&=C\cdot w(B(y,2r))^{1/{\alpha}-1-1/q}\bigg\|\bigg(\sum_{j=1}^\infty\big|f_j\big|^2\bigg)^{1/2}\cdot\chi_{B(y,2r)}\bigg\|_{L^1_w}\notag\\
&\times \frac{w(B(y,r))^{1/{\alpha}-1-1/q}}{w(B(y,2r))^{1/{\alpha}-1-1/q}}.
\end{align}
Moreover, since $1/{\alpha}-1-1/q<0$ and $w\in A_1$, then we apply doubling inequality (\ref{weights}) to obtain that
\begin{equation}\label{doubling2}
\frac{w(B(y,r))^{1/{\alpha}-1-1/q}}{w(B(y,2r))^{1/{\alpha}-1-1/q}}\leq C.
\end{equation}
Substituting the above inequality \eqref{doubling2} into \eqref{I1prime}, we thus obtain
\begin{equation}\label{WI1yr}
I'_1(y,r)\leq C\cdot w(B(y,2r))^{1/{\alpha}-1-1/q}\bigg\|\bigg(\sum_{j=1}^\infty\big|f_j\big|^2\bigg)^{1/2}\cdot\chi_{B(y,2r)}\bigg\|_{L^1_w}.
\end{equation}
As for the second term $I'_2(y,r)$, it follows directly from Chebyshev's inequality and the pointwise estimate \eqref{key estimate1} that
\begin{equation*}
\begin{split}
I'_2(y,r)&\leq2\cdot w(B(y,r))^{1/{\alpha}-1-1/q}\int_{B(y,r)}\bigg(\sum_{j=1}^\infty\big|\mathcal S_\gamma(f^\infty_j)(x)\big|^2\bigg)^{1/2}w(x)\,dx\\
&\leq C\cdot w(B(y,r))^{1/{\alpha}-1/q}
\sum_{l=1}^\infty\frac{1}{|B(y,2^{l+1}r)|}\int_{B(y,2^{l+1}r)}\bigg(\sum_{j=1}^\infty\big|f_j(z)\big|^2\bigg)^{1/2}dz.
\end{split}
\end{equation*}
Another application of $A_1$ condition on $w$ leads to that
\begin{equation*}
\begin{split}
&\frac{1}{|B(y,2^{l+1}r)|}\int_{B(y,2^{l+1}r)}\bigg(\sum_{j=1}^\infty\big|f_j(z)\big|^2\bigg)^{1/2}dz\\
&\leq C\frac{1}{w(B(y,2^{l+1}r))}\cdot\underset{z\in B(y,2^{l+1}r)}{\mbox{ess\,inf}}\;w(z)
\int_{B(y,2^{l+1}r)}\bigg(\sum_{j=1}^\infty\big|f_j(z)\big|^2\bigg)^{1/2}dz\\
&\leq C\frac{1}{w(B(y,2^{l+1}r))}\bigg(\int_{B(y,2^{l+1}r)}\bigg(\sum_{j=1}^\infty\big|f_j(z)\big|^2\bigg)^{1/2}w(z)\,dz\bigg).
\end{split}
\end{equation*}
Consequently,
\begin{align}\label{WI2yr}
I'_2(y,r)&\leq C\cdot w(B(y,r))^{1/{\alpha}-1/q}\notag\\
&\times\sum_{l=1}^\infty\bigg(\int_{B(y,2^{l+1}r)}\bigg(\sum_{j=1}^\infty\big|f_j(z)\big|^2\bigg)^{1/2}w(z)\,dz\bigg)\cdot w\big(B(y,2^{l+1}r)\big)^{-1}\notag\\
&=C\sum_{l=1}^\infty(B(y,2^{l+1}r))^{1/{\alpha}-1-1/q}\bigg\|\bigg(\sum_{j=1}^\infty\big|f_j\big|^2\bigg)^{1/2}\cdot\chi_{B(y,2^{l+1}r)}\bigg\|_{L^1_w}\notag\\
&\times\frac{w(B(y,r))^{1/{\alpha}-1/q}}{w(B(y,2^{l+1}r))^{1/{\alpha}-1/q}}.
\end{align}
Therefore by taking the $L^q_{\mu}$-norm of both sides of \eqref{Iprime}(with respect to the variable $y$), and then using Minkowski's inequality, \eqref{WI1yr} and \eqref{WI2yr}, we compute
\begin{equation*}
\begin{split}
&\bigg\|w(B(y,r))^{1/{\alpha}-1-1/q}\bigg\|\bigg(\sum_{j=1}^\infty\big|\mathcal S_{\gamma}(f_j)\big|^2\bigg)^{1/2}\cdot\chi_{B(y,r)}\bigg\|_{WL^1_w}\bigg\|_{L^q_{\mu}}\\
&\leq\big\|I'_1(y,r)\big\|_{L^q_{\mu}}+\big\|I'_2(y,r)\big\|_{L^q_{\mu}}\\
&\leq C\bigg\|w(B(y,2r))^{1/{\alpha}-1-1/q}\bigg\|\bigg(\sum_{j=1}^\infty\big|f_j\big|^2\bigg)^{1/2}\cdot\chi_{B(y,2r)}\bigg\|_{L^1_w}\bigg\|_{L^q_{\mu}}\\
&+C\sum_{l=1}^\infty\bigg\|w(B(y,2^{l+1}r))^{1/{\alpha}-1-1/q}
\bigg\|\bigg(\sum_{j=1}^\infty\big|f_j\big|^2\bigg)^{1/2}\cdot\chi_{B(y,2^{l+1}r)}\bigg\|_{L^1_w}\bigg\|_{L^q_{\mu}}
\times\frac{w(B(y,r))^{1/{\alpha}-1/q}}{w(B(y,2^{l+1}r))^{1/{\alpha}-1/q}}\\
&\leq C\bigg\|\bigg(\sum_{j=1}^\infty\big|f_j\big|^2\bigg)^{1/2}\bigg\|_{(L^1,L^q)^{\alpha}(w;\mu)}
+C\bigg\|\bigg(\sum_{j=1}^\infty\big|f_j\big|^2\bigg)^{1/2}\bigg\|_{(L^1,L^q)^{\alpha}(w;\mu)}
\times\sum_{l=1}^\infty\frac{w(B(y,r))^{1/{\alpha}-1/q}}{w(B(y,2^{l+1}r))^{1/{\alpha}-1/q}}\\
&\leq C\bigg\|\bigg(\sum_{j=1}^\infty\big|f_j\big|^2\bigg)^{1/2}\bigg\|_{(L^1,L^q)^{\alpha}(w;\mu)},
\end{split}
\end{equation*}
where in the last inequality we have used the estimate \eqref{psi1}. We end the proof by taking the supremum over all $r>0$.
\end{proof}

\section{Proof of Theorem \ref{mainthm:3}}
Given a real-valued function $b\in BMO(\mathbb R^n)$, we will follow the idea developed in \cite{alvarez,ding} and denote $F(\xi)=e^{\xi[b(x)-b(z)]}$, $\xi\in\mathbb C$. By the analyticity of $F(\xi)$ on $\mathbb C$ and the Cauchy integral formula, we first compute
\begin{equation*}
\begin{split}
b(x)-b(z)&=F'(0)=\frac{1}{2\pi i}\int_{|\xi|=1}\frac{F(\xi)}{\xi^2}\,d\xi\\
&=\frac{1}{2\pi}\int_0^{2\pi}e^{e^{i\theta}[b(x)-b(z)]}\cdot e^{-i\theta}d\theta.
\end{split}
\end{equation*}
Thus, for any $\varphi\in{\mathcal C}_\gamma$, $0<\gamma\le1$ and $j\in\mathbb Z^+$, we obtain
\begin{equation*}
\begin{split}
&\bigg|\int_{\mathbb R^n}\big[b(x)-b(z)\big]\varphi_t(y-z)f_j(z)\,dz\bigg|\\
=&\bigg|\frac{1}{2\pi}\int_0^{2\pi}\bigg(\int_{\mathbb R^n}\varphi_t(y-z)e^{-e^{i\theta}b(z)}f_j(z)\,dz\bigg)
e^{e^{i\theta}b(x)}\cdot e^{-i\theta}d\theta\bigg|\\
\leq&\frac{1}{2\pi}\int_0^{2\pi}\sup_{\varphi\in{\mathcal C}_\gamma}\bigg|\int_{\mathbb R^n}\varphi_t(y-z)e^{-e^{i\theta}b(z)}f_j(z)\,dz\bigg|
e^{\cos\theta\cdot b(x)}d\theta\\
\leq&\frac{1}{2\pi}\int_0^{2\pi}\mathcal A_\gamma\big(e^{-e^{i\theta}b}\cdot f_j\big)(y,t)\cdot e^{\cos\theta\cdot b(x)}d\theta.
\end{split}
\end{equation*}
From this, it follows that
\begin{equation*}
\big|\big[b,\mathcal S_\gamma\big](f_j)(x)\big|\leq\frac{1}{2\pi}\int_0^{2\pi}
\mathcal S_\gamma\big(e^{-e^{i\theta}b}\cdot f_j\big)(x)\cdot e^{\cos\theta\cdot b(x)}d\theta.
\end{equation*}
Moreover, by using standard duality argument and Cauchy--Schwarz inequality, we compute
\begin{equation*}
\begin{split}
&\bigg(\sum_{j=1}^\infty\big|\big[b,\mathcal S_\gamma\big](f_j)(x)\big|^2\bigg)^{1/2}\\
&\leq\frac{1}{2\pi}\Bigg(\sum_{j=1}^\infty\left|\int_0^{2\pi}
\mathcal S_\gamma\big(e^{-e^{i\theta}b}\cdot f_j\big)(x)\cdot e^{\cos\theta\cdot b(x)}d\theta\right|^2\Bigg)^{1/2}\\
&=\frac{1}{2\pi}\sup_{(\sum_{j=1}^\infty|\zeta_j|^2)^{1/2}\leq1}\sum_{j=1}^\infty
\left(\int_0^{2\pi}\mathcal S_\gamma\big(e^{-e^{i\theta}b}\cdot f_j\big)(x)\cdot e^{\cos\theta\cdot b(x)}d\theta\cdot\zeta_j\right)\\
&\leq\frac{1}{2\pi}\int_0^{2\pi}\sup_{(\sum_{j=1}^\infty|\zeta_j|^2)^{1/2}\leq1}
\Bigg(\sum_{j=1}^\infty\mathcal S_\gamma\big(e^{-e^{i\theta}b}\cdot f_j\big)(x)\cdot e^{\cos\theta\cdot b(x)}\cdot\zeta_j\Bigg)d\theta\\
&\leq\frac{1}{2\pi}\int_0^{2\pi}\bigg(\sum_{j=1}^\infty\Big|\mathcal S_\gamma\big(e^{-e^{i\theta}b}\cdot f_j\big)(x)\Big|^2\bigg)^{1/2}
\cdot e^{\cos\theta\cdot b(x)}d\theta.
\end{split}
\end{equation*}
Therefore, applying the $L^p_w$-boundedness of vector-valued intrinsic square function (see Theorem \ref{strong}),
and the same method as proving Theorem 1 in \cite{ding}, we can also show the following result.
\begin{theorem}\label{commutator thm}
Let $0<\gamma\le1$, $1<p<\infty$ and $w\in A_p$. Then there exists a constant $C>0$ independent of $\vec{f}=(f_1,f_2,\ldots)$ such that
\begin{equation*}
\bigg\|\bigg(\sum_{j=1}^\infty\big|\big[b,\mathcal S_\gamma\big](f_j)\big|^2\bigg)^{1/2}\bigg\|_{L^p_w}
\leq C \bigg\|\bigg(\sum_{j=1}^\infty\big|f_j\big|^2\bigg)^{1/2}\bigg\|_{L^p_w}
\end{equation*}
provided that $b\in BMO(\mathbb R^n)$.
\end{theorem}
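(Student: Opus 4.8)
The plan is to reduce the vector-valued commutator estimate to the known vector-valued strong $(p,p)$ bound for $\mathcal S_\gamma$ itself (Theorem \ref{strong}), exactly along the lines of the computation already displayed just before the statement. First I would record the pointwise bound
\begin{equation*}
\bigg(\sum_{j=1}^\infty\big|\big[b,\mathcal S_\gamma\big](f_j)(x)\big|^2\bigg)^{1/2}
\leq\frac{1}{2\pi}\int_0^{2\pi}\bigg(\sum_{j=1}^\infty\Big|\mathcal S_\gamma\big(e^{-e^{i\theta}b}\cdot f_j\big)(x)\Big|^2\bigg)^{1/2}\cdot e^{\cos\theta\cdot b(x)}\,d\theta,
\end{equation*}
which has already been derived in the excerpt via the Cauchy integral representation of $b(x)-b(z)$, duality for $\ell^2$, and Cauchy--Schwarz. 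Then, integrating the $p$-th power in $x$ against $w$ and applying Minkowski's integral inequality in the $\theta$-variable, the task becomes: for each fixed $\theta\in[0,2\pi]$, estimate $\big\|\,e^{\cos\theta\cdot b}\cdot\big(\sum_j|\mathcal S_\gamma(e^{-e^{i\theta}b}f_j)|^2\big)^{1/2}\big\|_{L^p_w}$ by $C\big\|(\sum_j|f_j|^2)^{1/2}\big\|_{L^p_w}$ with $C$ uniform in $\theta$.

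To handle a fixed $\theta$, I would split the multiplicative weight: write $e^{\cos\theta\cdot b(x)}$ and pair it with the weight $w$, and absorb $e^{-\cos\theta\cdot b(z)}$ (the modulus of $e^{-e^{i\theta}b(z)}$) into the input. The standard device, due to Alvarez--Bagby--Kurtz--Pérez and used in \cite{alvarez,ding}, is this: since $b\in BMO$, the John--Nirenberg inequality guarantees that there is a small $\varepsilon_0>0$ so that $e^{\varepsilon b}\in A_2$ and, more to the point, $w\cdot e^{p\,\mathrm{Re}(s)\,b}\in A_p$ with $A_p$-constant bounded uniformly for all complex $s$ with $|s|\le\varepsilon_0$, whenever $w\in A_p$. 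Choosing $|s|=|e^{i\theta}|=1$ is too large in general, so instead one rescales: replace $b$ by $\varepsilon_0 b$ (still in $BMO$, with the same boundedness statement up to the constant $\varepsilon_0$), or equivalently run the Cauchy integral over the circle $|\xi|=\varepsilon_0$ instead of $|\xi|=1$, which only changes the admissible constants. With that normalization, for each $\theta$ the weight $w_\theta(x):=w(x)\,e^{p\cos\theta\cdot b(x)}$ lies in $A_p$ with constant independent of $\theta$, and a change of viewpoint gives
\begin{equation*}
\Big\|\,e^{\cos\theta\cdot b}\big(\textstyle\sum_j|\mathcal S_\gamma(e^{-e^{i\theta}b}f_j)|^2\big)^{1/2}\Big\|_{L^p_w}
=\Big\|\big(\textstyle\sum_j|\mathcal S_\gamma(g^\theta_j)|^2\big)^{1/2}\Big\|_{L^p_{w_\theta}},
\end{equation*}
where $g^\theta_j(z)=e^{-e^{i\theta}b(z)}f_j(z)$ so that $\|\vec g^\theta(z)\|_{\ell^2}=e^{-\cos\theta\cdot b(z)}\|\vec f(z)\|_{\ell^2}$. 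Applying Theorem \ref{strong} with the weight $w_\theta$ bounds the right side by $C_\theta\big\|(\sum_j|g^\theta_j|^2)^{1/2}\big\|_{L^p_{w_\theta}}=C_\theta\big\|e^{-\cos\theta\cdot b}(\sum_j|f_j|^2)^{1/2}\big\|_{L^p_{w_\theta}}$, and since $w_\theta=w\,e^{p\cos\theta\cdot b}$ the weight $e^{-p\cos\theta\cdot b}$ cancels, leaving $C_\theta\big\|(\sum_j|f_j|^2)^{1/2}\big\|_{L^p_w}$. The point is that $C_\theta$ depends only on the $A_p$-constant of $w_\theta$, which is uniformly controlled.

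The main obstacle is precisely this uniformity: one must verify that $\sup_{\theta}[w\,e^{p\cos\theta\cdot b}]_{A_p}<\infty$ and that the constant in Theorem \ref{strong} depends on the weight only through its $A_p$-constant (a quantitative form of Wilson's theorem, which is standard but should be invoked carefully). Establishing $w\,e^{p\,\mathrm{Re}(e^{i\theta}) b}\in A_p$ uniformly is a well-known $BMO$-to-$A_p$ perturbation fact proved via John--Nirenberg and Hölder's inequality; this is exactly the content of Theorem 1 in \cite{ding} (and \cite{alvarez}) and is where the smallness of $\varepsilon_0$ is used. Once the $\theta$-uniform bound is in hand, integrating over $\theta\in[0,2\pi]$ contributes only a harmless factor of $2\pi$, and Minkowski's inequality closes the argument; no new ideas beyond those in the cited references are needed, which is why I would simply state that the proof follows ``the same method as proving Theorem 1 in \cite{ding}.''
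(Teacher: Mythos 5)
Your proposal is correct and takes the same route the paper itself takes: the paper derives the pointwise bound $\bigl(\sum_j|[b,\mathcal S_\gamma](f_j)(x)|^2\bigr)^{1/2}\le\frac{1}{2\pi}\int_0^{2\pi}\bigl(\sum_j|\mathcal S_\gamma(e^{-e^{i\theta}b}f_j)(x)|^2\bigr)^{1/2}e^{\cos\theta\,b(x)}\,d\theta$ and then simply cites Theorem~\ref{strong} and ``the same method as proving Theorem 1 in \cite{ding},'' which is exactly the ABKP conjugation device you spell out (Minkowski in $\theta$, absorb $e^{p\cos\theta\,b}$ into the weight, rescale the Cauchy contour to a small radius so that $w\,e^{p\cos\theta\,b}\in A_p$ uniformly via John--Nirenberg, apply the weighted vector-valued bound with the conjugated weight, and observe the exponentials cancel). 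You have correctly identified the two points the citation quietly relies on -- the uniform-in-$\theta$ $A_p$ membership after rescaling, and that the constant in Theorem~\ref{strong} depends on $w$ only through $[w]_{A_p}$ -- so your write-up is a faithful (and fuller) account of the paper's argument.
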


To prove our main theorem in this section, we also need the following lemma about $BMO(\mathbb R^n)$ functions.
\begin{lemma}\label{BMO}
Let $b$ be a function in $BMO(\mathbb R^n)$. Then

$(i)$ For every ball $B$ in $\mathbb R^n$ and for all $l\in\mathbb Z^+$,
\begin{equation*}
\big|b_{2^{l+1}B}-b_B\big|\leq C\cdot(l+1)\|b\|_*.
\end{equation*}

$(ii)$ For every ball $B$ in $\mathbb R^n$ and for all $w\in A_p$ with $1\leq p<\infty$,
\begin{equation*}
\bigg(\int_B\big|b(x)-b_B\big|^pw(x)\,dx\bigg)^{1/p}\leq C\|b\|_*\cdot w(B)^{1/p}.
\end{equation*}
\end{lemma}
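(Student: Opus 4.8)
The plan is to treat the two parts separately; both are classical facts about $BMO(\mathbb R^n)$, and the only nontrivial input beyond elementary estimates is the John--Nirenberg inequality together with the $A_\infty$ comparison \eqref{compare} (which applies since $A_p\subset A_\infty$ for every $1\le p<\infty$).

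For part $(i)$ I would argue by telescoping along the dyadic chain of balls. Writing
\[
b_{2^{l+1}B}-b_B=\sum_{k=0}^{l}\big(b_{2^{k+1}B}-b_{2^{k}B}\big),
\]
it suffices to bound each summand by $2^{n}\|b\|_*$. Since $2^{k}B\subset 2^{k+1}B$ and $|2^{k+1}B|=2^{n}|2^{k}B|$, one has
\[
\big|b_{2^{k+1}B}-b_{2^{k}B}\big|\le\frac{1}{|2^{k}B|}\int_{2^{k}B}\big|b(x)-b_{2^{k+1}B}\big|\,dx\le 2^{n}\cdot\frac{1}{|2^{k+1}B|}\int_{2^{k+1}B}\big|b(x)-b_{2^{k+1}B}\big|\,dx\le 2^{n}\|b\|_*,
\]
and summing over $0\le k\le l$ gives $|b_{2^{l+1}B}-b_B|\le 2^{n}(l+1)\|b\|_*$, which is the claim with $C=2^{n}$.

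For part $(ii)$ I would pass to the distribution function of $b-b_B$ on $B$ and use the weighted decay coming from John--Nirenberg. By the layer-cake formula,
\[
\int_B\big|b(x)-b_B\big|^{p}w(x)\,dx=p\int_0^\infty\lambda^{p-1}\,w(E_\lambda)\,d\lambda,\qquad E_\lambda:=\{x\in B:|b(x)-b_B|>\lambda\}.
\]
The John--Nirenberg inequality (with the normalization making the decay rate proportional to $1/\|b\|_*$) gives $|E_\lambda|\le C_1|B|\,e^{-C_2\lambda/\|b\|_*}$, and then \eqref{compare} with exponent $\delta>0$ yields $w(E_\lambda)\le C\,w(B)\big(|E_\lambda|/|B|\big)^{\delta}\le C\,w(B)\,e^{-C_2\delta\lambda/\|b\|_*}$. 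Substituting this into the integral and evaluating the resulting Gamma integral,
\[
\int_B\big|b(x)-b_B\big|^{p}w(x)\,dx\le C\,w(B)\int_0^\infty\lambda^{p-1}e^{-C_2\delta\lambda/\|b\|_*}\,d\lambda=C\,\|b\|_*^{p}\,w(B),
\]
and taking $p$-th roots completes the proof; the case $p=1$ is included since $A_1\subset A_\infty$.

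There is no genuine obstacle here: the argument is standard bookkeeping. The only point that needs care is invoking John--Nirenberg with the right normalization so that the exponential rate scales like $1/\|b\|_*$, and then combining it with the $A_\infty$ estimate \eqref{compare} — this is precisely where the hypothesis $w\in A_p$ (hence $w\in A_\infty$) enters. The doubling bound \eqref{general weights} is what keeps the normalizing constants in part $(i)$ uniform, but it is not otherwise needed.
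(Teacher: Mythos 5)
Your proof is correct, and both arguments are the standard ones for these classical $BMO$ facts: part $(i)$ by telescoping along the chain of dyadic dilates, part $(ii)$ by the layer-cake formula combined with John--Nirenberg and the $A_\infty$ comparison \eqref{compare}. The paper itself gives no proof here---it merely refers the reader to \cite{stein2} for $(i)$ and to \cite{wang} for $(ii)$---and the arguments in those references are essentially what you have written, so there is no divergence of method to report. One peripheral remark in your closing paragraph is off: you say the weighted doubling bound \eqref{general weights} is what keeps the constants in part $(i)$ uniform, but $(i)$ is an unweighted statement; the factor $2^n$ there comes solely from the Lebesgue-measure ratio $|2^{k+1}B|/|2^{k}B|=2^n$ and has nothing to do with any weight. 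This misattribution does not affect the validity of either part of your proof.
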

\begin{proof}
For the proof of $(i)$, we refer the reader to \cite{stein2}. For the proof of $(ii)$, we refer the reader to \cite{wang}.
\end{proof}

\begin{proof}[Proof of Theorem $\ref{mainthm:3}$]
Let $1<p\leq\alpha<q\leq\infty$ and $\big(\sum_{j=1}^\infty|f_j|^2\big)^{1/2}\in(L^p,L^q)^{\alpha}(w;\mu)$ with $w\in A_p$ and $\mu\in\Delta_2$.
For each fixed ball $B=B(y,r)\subset\mathbb R^n$, as before, we represent $f_j$ as $f_j=f^0_j+f^\infty_j$, where $f^0_j=f_j\cdot\chi_{2B}$ and $2B=B(y,2r)\subset\mathbb R^n$, $j=1,2,\ldots$. Then we write
\begin{align}\label{J}
&w(B(y,r))^{1/{\alpha}-1/p-1/q}\bigg\|\bigg(\sum_{j=1}^\infty\big|\big[b,\mathcal S_\gamma\big](f_j)\big|^2\bigg)^{1/2}\cdot\chi_{B(y,r)}\bigg\|_{L^p_w}\notag\\
&=w(B(y,r))^{1/{\alpha}-1/p-1/q}\bigg(\int_{B(y,r)}\bigg(\sum_{j=1}^\infty
\big|\big[b,\mathcal S_\gamma\big](f_j)(x)\big|^2\bigg)^{p/2}w(x)\,dx\bigg)^{1/p}\notag\\
&\leq w(B(y,r))^{1/{\alpha}-1/p-1/q}\bigg(\int_{B(y,r)}\bigg(\sum_{j=1}^\infty\big|\big[b,\mathcal S_\gamma\big](f^0_j)(x)\big|^2\bigg)^{p/2}w(x)\,dx\bigg)^{1/p}\notag\\
&+w(B(y,r))^{1/{\alpha}-1/p-1/q}\bigg(\int_{B(y,r)}\bigg(\sum_{j=1}^\infty\big|\big[b,\mathcal S_\gamma\big](f^\infty_j)(x)\big|^2\bigg)^{p/2}w(x)\,dx\bigg)^{1/p}\notag\\
&:=J_1(y,r)+J_2(y,r).
\end{align}
By using Theorem \ref{commutator thm} and the inequality (\ref{doubling1}), we obtain
\begin{align}\label{J1yr}
J_1(y,r)&\leq w(B(y,r))^{1/{\alpha}-1/p-1/q}\bigg\|\bigg(\sum_{j=1}^\infty\big|\big[b,\mathcal S_\gamma\big](f^0_j)\big|^2\bigg)^{1/2}\bigg\|_{L^p_w}\notag\\
&\leq C\cdot w(B(y,r))^{1/{\alpha}-1/p-1/q}
\bigg(\int_{B(y,2r)}\bigg(\sum_{j=1}^\infty\big|f_j(x)\big|^2\bigg)^{p/2}w(x)\,dx\bigg)^{1/p}\notag\\
&=C\cdot w(B(y,2r))^{1/{\alpha}-1/p-1/q}\bigg\|\bigg(\sum_{j=1}^\infty\big|f_j\big|^2\bigg)^{1/2}\cdot\chi_{B(y,2r)}\bigg\|_{L^p_w}\times \frac{w(B(y,r))^{1/{\alpha}-1/p-1/q}}{w(B(y,2r))^{1/{\alpha}-1/p-1/q}}\notag\\
&\leq C\cdot w(B(y,2r))^{1/{\alpha}-1/p-1/q}\bigg\|\bigg(\sum_{j=1}^\infty\big|f_j\big|^2\bigg)^{1/2}\cdot\chi_{B(y,2r)}\bigg\|_{L^p_w}.
\end{align}
Let us now turn to the estimate of $J_2(y,r)$. For any given $\varphi\in{\mathcal C}_\gamma$, $0<\gamma\le1$, $j=1,2,\ldots$, and $(\xi,t)\in\Gamma(x)$ with $x\in B(y,r)$, we have
\begin{equation*}
\begin{split}
\sup_{\varphi\in{\mathcal C}_\gamma}\bigg|\int_{\mathbb R^n}\big[b(x)-b(z)\big]\varphi_t(\xi-z)f^\infty_j(z)\,dz\bigg|&\le
\big|b(x)-b_{B(y,r)}\big|\cdot\sup_{\varphi\in{\mathcal C}_\gamma}\bigg|\int_{\mathbb R^n}\varphi_t(\xi-z)f^\infty_j(z)\,dz\bigg|\\
&+\sup_{\varphi\in{\mathcal C}_\gamma}\bigg|\int_{\mathbb R^n}\big[b_{B(y,r)}-b(z)\big]\varphi_t(\xi-z)f^\infty_j(z)\,dz\bigg|.
\end{split}
\end{equation*}
By definition, we can see that
\begin{equation*}
\begin{split}
\big|\big[b,\mathcal S_\gamma\big](f^\infty_j)(x)\big|&\leq\big|b(x)-b_{B(y,r)}\big|\cdot\mathcal S_\gamma(f^\infty_j)(x)
+\mathcal S_\gamma\Big([b_{B(y,r)}-b]f^\infty_j\Big)(x).
\end{split}
\end{equation*}
From this and Minkowski' inequality for series, we further obtain for any $x\in B(y,r)$,
\begin{equation*}
\begin{split}
\bigg(\sum_{j=1}^\infty\big|\big[b,\mathcal S_\gamma\big](f^\infty_j)(x)\big|^2\bigg)^{1/2}
&\leq\big|b(x)-b_{B(y,r)}\big|\bigg(\sum_{j=1}^\infty\big|\mathcal S_\gamma(f^{\infty}_j)(x)\big|^2\bigg)^{1/2}\\
&+\bigg(\sum_{j=1}^\infty\Big|\mathcal S_\gamma\Big([b_{B(y,r)}-b]f^\infty_j\Big)(x)\Big|^2\bigg)^{1/2}.
\end{split}
\end{equation*}
Fix $x\in B(y,r)$, the following estimate is known from \eqref{key estimate1}:
\begin{equation}\label{Keyw}
\bigg(\sum_{j=1}^\infty\big|\mathcal S_\gamma(f^\infty_j)(x)\big|^2\bigg)^{1/2}
\leq C\sum_{l=1}^\infty\frac{1}{|B(y,2^{l+1}r)|}\int_{B(y,2^{l+1}r)}\bigg(\sum_{j=1}^\infty\big|f_j(z)\big|^2\bigg)^{1/2}dz.
\end{equation}
Using the same procedure as in the proof of \eqref{key estimate1}, for any $\varphi\in{\mathcal C}_\gamma$, $0<\gamma\le1$, $j=1,2,\ldots$, and $(\xi,t)\in\Gamma(x)$ with $x\in B(y,r)$, we can also show that
\begin{align}\label{Key3}
&\bigg|\int_{\mathbb R^n}\big[b_{B(y,r)}-b(z)\big]\varphi_t(\xi-z)f^\infty_j(z)\,dz\bigg|\notag\\
&=\bigg|\int_{B(y,2r)^c}\big[b_{B(y,r)}-b(z)\big]\varphi_t(\xi-z)f_j(z)\,dz\bigg|\notag\\
&\leq C\cdot t^{-n}\int_{B(y,2r)^c\cap\{z:|\xi-z|\le t\}}\big|b(z)-b_{B(y,r)}\big|\big|f_j(z)\big|\,dz\notag\\
&\leq C\cdot t^{-n}\sum_{l=1}^\infty\int_{B(y,2^{l+1}r)\backslash B(y,2^{l}r)\cap\{z:|\xi-z|\le t\}}\big|b(z)-b_{B(y,r)}\big|\big|f_j(z)\big|\,dz.
\end{align}
Hence, for any $x\in B(y,r)$, by using the inequalities \eqref{Key3} and \eqref{Key2} together with Minkowski's inequality for integrals, we can deduce
\begin{equation*}
\begin{split}
&\mathcal S_\gamma\Big([b_{B(y,r)}-b]f^\infty_j\Big)(x)
=\left(\iint_{\Gamma(x)}\sup_{\varphi\in{\mathcal C}_\gamma}
\bigg|\int_{\mathbb R^n}\big[b_{B(y,r)}-b(z)\big]\varphi_t(\xi-z)f^\infty_j(z)\,dz\bigg|^2\frac{d\xi dt}{t^{n+1}}\right)^{1/2}\\
&\leq C\left(\int_{2^{l-2}r}^\infty\int_{|x-\xi|<t}\bigg|t^{-n}\sum_{l=1}^\infty\int_{B(y,2^{l+1}r)\backslash B(y,2^{l}r)}\big|b(z)-b_{B(y,r)}\big|\big|f_j(z)\big|\,dz\bigg|^2\frac{d\xi dt}{t^{n+1}}\right)^{1/2}\\
&\le C\left(\sum_{l=1}^\infty\int_{B(y,2^{l+1}r)\backslash B(y,2^{l}r)}\big|b(z)-b_{B(y,r)}\big|\big|f_j(z)\big|\,dz\right)
\left(\int_{2^{l-2}r}^\infty\frac{dt}{t^{2n+1}}\right)^{1/2}\\
&\leq C\sum_{l=1}^\infty\frac{1}{|B(y,2^{l+1}r)|}\int_{B(y,2^{l+1}r)\backslash B(y,2^{l}r)}\big|b(z)-b_{B(y,r)}\big|\big|f_j(z)\big|\,dz.
\end{split}
\end{equation*}
Therefore, by duality and Cauchy--Schwarz inequality, we get
\begin{align}\label{key estimate2}
&\bigg(\sum_{j=1}^\infty\Big|\mathcal S_\gamma\Big([b_{B(y,r)}-b]f^\infty_j\Big)(x)\Big|^2\bigg)^{1/2}\notag\\
&\leq C\Bigg(\sum_{j=1}^\infty\bigg|\sum_{l=1}^\infty\frac{1}{|B(y,2^{l+1}r)|}
\int_{B(y,2^{l+1}r)\backslash B(y,2^{l}r)}\big|b(z)-b_{B(y,r)}\big|\big|f_j(z)\big|\,dz\bigg|^2\Bigg)^{1/2}\notag\\
&=C\sup_{(\sum_{j=1}^\infty|\zeta_j|^2)^{1/2}\leq1}\sum_{j=1}^\infty\bigg(\sum_{l=1}^\infty\frac{1}{|B(y,2^{l+1}r)|}
\int_{B(y,2^{l+1}r)}\big|b(z)-b_{B(y,r)}\big|\big|f_j(z)\big|\,dz\cdot\zeta_j\bigg)\notag\\
&\leq C\sum_{l=1}^\infty\frac{1}{|B(y,2^{l+1}r)|}\int_{B(y,2^{l+1}r)}\sup_{(\sum_{j=1}^\infty|\zeta_j|^2)^{1/2}\leq1}
\bigg(\sum_{j=1}^\infty\big|b(z)-b_{B(y,r)}\big|\big|f_j(z)\big|\cdot\zeta_j\bigg)dz\notag\\
&\leq C\sum_{l=1}^\infty\frac{1}{|B(y,2^{l+1}r)|}\int_{B(y,2^{l+1}r)}\big|b(z)-b_{B(y,r)}\big|\bigg(\sum_{j=1}^\infty\big|f_j(z)\big|^2\bigg)^{1/2}dz.
\end{align}
Consequently, from the above two pointwise estimates \eqref{Keyw} and \eqref{key estimate2}, it follows that
\begin{equation*}
\begin{split}
J_2(y,r)&\leq C\cdot w(B(y,r))^{1/{\alpha}-1/p-1/q}\bigg(\int_{B(y,r)}\big|b(x)-b_{B(y,r)}\big|^pw(x)\,dx\bigg)^{1/p}\\
&\times\bigg(\sum_{l=1}^\infty\frac{1}{|B(y,2^{l+1}r)|}\int_{B(y,2^{l+1}r)}\bigg(\sum_{j=1}^\infty\big|f_j(z)\big|^2\bigg)^{1/2}dz\bigg)\\
&+C\cdot w(B(y,r))^{1/{\alpha}-1/q}\sum_{l=1}^\infty\frac{1}{|B(y,2^{l+1}r)|}\int_{B(y,2^{l+1}r)}\big|b_{B(y,2^{l+1}r)}-b_{B(y,r)}\big|
\cdot\bigg(\sum_{j=1}^\infty\big|f_j(z)\big|^2\bigg)^{1/2}dz\\
&+C\cdot w(B(y,r))^{1/{\alpha}-1/q}
\sum_{l=1}^\infty\frac{1}{|B(y,2^{l+1}r)|}\int_{B(y,2^{l+1}r)}\big|b(z)-b_{B(y,2^{l+1}r)}\big|
\cdot\bigg(\sum_{j=1}^\infty\big|f_j(z)\big|^2\bigg)^{1/2}dz\\
&:=J_3(y,r)+J_4(y,r)+J_5(y,r).
\end{split}
\end{equation*}
Below we will give the estimates of $J_3(y,r)$, $J_4(y,r)$ and $J_5(y,r)$, respectively. Using $(ii)$ of Lemma \ref{BMO}, H\"older's inequality and the $A_p$ condition on $w$, we obtain
\begin{equation*}
\begin{split}
J_3(y,r)&\leq C\|b\|_*\cdot w(B(y,r))^{1/{\alpha}-1/q}
\times\sum_{l=1}^\infty\bigg(\frac{1}{|B(y,2^{l+1}r)|}\int_{B(y,2^{l+1}r)}\bigg(\sum_{j=1}^\infty\big|f_j(z)\big|^2\bigg)^{1/2}dz\bigg)\\
&\leq C\|b\|_*\cdot w(B(y,r))^{1/{\alpha}-1/q}\sum_{l=1}^\infty\frac{1}{|B(y,2^{l+1}r)|}
\bigg(\int_{B(y,2^{l+1}r)}\bigg(\sum_{j=1}^\infty\big|f_j(z)\big|^2\bigg)^{p/2}w(z)\,dz\bigg)^{1/p}\\
&\times\bigg(\int_{B(y,2^{l+1}r)}w(z)^{-{p'}/p}\,dz\bigg)^{1/{p'}}\\
&\leq C\|b\|_*\cdot w(B(y,r))^{1/{\alpha}-1/q}\\
&\times\sum_{l=1}^\infty\bigg(\int_{B(y,2^{l+1}r)}\bigg(\sum_{j=1}^\infty\big|f_j(z)\big|^2\bigg)^{p/2}w(z)\,dz\bigg)^{1/p}
\cdot w\big(B(y,2^{l+1}r)\big)^{-1/p}.
\end{split}
\end{equation*}
On the other hand, applying $(i)$ of Lemma \ref{BMO}, H\"older's inequality and the $A_p$ condition on $w$, we can deduce that
\begin{equation*}
\begin{split}
J_4(y,r)&\leq C\|b\|_*\cdot w(B(y,r))^{1/{\alpha}-1/q}
\times\sum_{l=1}^\infty\frac{(l+1)}{|B(y,2^{l+1}r)|}
\int_{B(y,2^{l+1}r)}\bigg(\sum_{j=1}^\infty\big|f_j(z)\big|^2\bigg)^{1/2}dz\\
&\leq C\|b\|_*\cdot w(B(y,r))^{1/{\alpha}-1/q}
\sum_{l=1}^\infty\frac{(l+1)}{|B(y,2^{l+1}r)|}
\bigg(\int_{B(y,2^{l+1}r)}\bigg(\sum_{j=1}^\infty\big|f_j(z)\big|^2\bigg)^{p/2}w(z)\,dz\bigg)^{1/p}\\
&\times\bigg(\int_{B(y,2^{l+1}r)}w(z)^{-{p'}/p}\,dz\bigg)^{1/{p'}}\\
&\leq C\|b\|_*\cdot w(B(y,r))^{1/{\alpha}-1/q}\\
&\times\sum_{l=1}^\infty\big(l+1\big)\cdot\bigg(\int_{B(y,2^{l+1}r)}\bigg(\sum_{j=1}^\infty\big|f_j(z)\big|^2\bigg)^{p/2}w(z)\,dz\bigg)^{1/p}
\cdot w\big(B(y,2^{l+1}r)\big)^{-1/p}.
\end{split}
\end{equation*}
It remains to estimate the last term $J_5(y,r)$. An application of H\"older's inequality gives us that
\begin{equation*}
\begin{split}
J_5(y,r)&\leq C\cdot w(B(y,r))^{1/{\alpha}-1/q}\sum_{l=1}^\infty\frac{1}{|B(y,2^{l+1}r)|}
\bigg(\int_{B(y,2^{l+1}r)}\bigg(\sum_{j=1}^\infty\big|f_j(z)\big|^2\bigg)^{p/2}w(z)\,dz\bigg)^{1/p}\\
&\times\left(\int_{B(y,2^{l+1}r)}\big|b(z)-b_{B(y,2^{l+1}r)}\big|^{p'}w(z)^{-{p'}/p}\,dz\right)^{1/{p'}}.
\end{split}
\end{equation*}
If we denote $\nu(z)=w(z)^{-{p'}/p}$, then we know that the weight $\nu(z)$ belongs to $A_{p'}$ whenever $w\in A_p$(see \cite{duoand,garcia}). From this fact together with $(ii)$ of Lemma \ref{BMO} and the $A_p$ condition, it follows that
\begin{align}\label{WH}
\bigg(\int_{B(y,2^{l+1}r)}\big|b(z)-b_{B(y,2^{l+1}r)}\big|^{p'}\nu(z)\,dz\bigg)^{1/{p'}}
&\leq C\|b\|_*\cdot\nu\big(B(y,2^{l+1}r)\big)^{1/{p'}}\notag\\
&=C\|b\|_*\cdot\bigg(\int_{B(y,2^{l+1}r)}w(z)^{-{p'}/p}\,dz\bigg)^{1/{p'}}\notag\\
&\leq C\|b\|_*\cdot\frac{|B(y,2^{l+1}r)|}{w(B(y,2^{l+1}r))^{1/p}}.
\end{align}
Therefore, in view of \eqref{WH}, we can see that
\begin{equation*}
\begin{split}
J_5(y,r)&\leq C\|b\|_*\cdot w(B(y,r))^{1/{\alpha}-1/q}\\
&\times\sum_{l=1}^\infty\bigg(\int_{B(y,2^{l+1}r)}\bigg(\sum_{j=1}^\infty\big|f_j(z)\big|^2\bigg)^{p/2}w(z)\,dz\bigg)^{1/p}
\cdot w\big(B(y,2^{l+1}r)\big)^{-1/p}.
\end{split}
\end{equation*}
Summarizing the above discussions, we conclude that
\begin{align}\label{J2yr}
J_2(y,r)&\leq C\|b\|_*\cdot w(B(y,r))^{1/{\alpha}-1/q}\notag\\
&\times\sum_{l=1}^\infty\big(l+1\big)\cdot\bigg(\int_{B(y,2^{l+1}r)}\bigg(\sum_{j=1}^\infty\big|f_j(z)\big|^2\bigg)^{p/2}w(z)\,dz\bigg)^{1/p}
\cdot w\big(B(y,2^{l+1}r)\big)^{-1/p}\notag\\
&=C\sum_{l=1}^\infty w(B(y,2^{l+1}r))^{1/{\alpha}-1/p-1/q}\bigg\|\bigg(\sum_{j=1}^\infty\big|f_j\big|^2\bigg)^{1/2}\cdot\chi_{B(y,2^{l+1}r)}\bigg\|_{L^p_w}\notag\\
&\times\big(l+1\big)\cdot\frac{w(B(y,r))^{1/{\alpha}-1/q}}{w(B(y,2^{l+1}r))^{1/{\alpha}-1/q}}.
\end{align}
Notice that when $w\in A_p$ with $1\leq p<\infty$, one has $w\in A_\infty$. Thus, by using inequality (\ref{compare}) with exponent $\delta^*>0$ together with our assumption that $\alpha<q$, we obtain
\begin{align}\label{psi3}
\sum_{l=1}^\infty\big(l+1\big)\cdot\frac{w(B(y,r))^{1/{\alpha}-1/q}}{w(B(y,2^{l+1}r))^{1/{\alpha}-1/q}}
&\leq C\sum_{l=1}^\infty\big(l+1\big)\cdot\left(\frac{|B(y,r)|}{|B(y,2^{l+1}r)|}\right)^{\delta^*(1/{\alpha}-1/q)}\notag\\
&= C\sum_{l=1}^\infty\big(l+1\big)\cdot\left(\frac{1}{2^{(l+1)n}}\right)^{\delta^*(1/{\alpha}-1/q)}\notag\\
&\leq C,
\end{align}
where the last series is convergent since the exponent $\delta^*(1/{\alpha}-1/q)$ is positive. Therefore by taking the $L^q_{\mu}$-norm of both sides of \eqref{J}(with respect to the variable $y$), and then using Minkowski's inequality, \eqref{J1yr}, \eqref{J2yr} and \eqref{psi3}, we can get
\begin{equation*}
\begin{split}
&\bigg\|w(B(y,r))^{1/{\alpha}-1/p-1/q}\bigg\|\bigg(\sum_{j=1}^\infty\big|\big[b,\mathcal S_\gamma\big](f_j)\big|^2\bigg)^{1/2}\cdot\chi_{B(y,r)}\bigg\|_{L^p_w}\bigg\|_{L^q_{\mu}}\\
&\leq\big\|J_1(y,r)\big\|_{L^q_{\mu}}+\big\|J_2(y,r)\big\|_{L^q_{\mu}}\\
&\leq C\bigg\|w(B(y,2r))^{1/{\alpha}-1/p-1/q}\bigg\|\bigg(\sum_{j=1}^\infty\big|f_j\big|^2\bigg)^{1/2}\cdot\chi_{B(y,2r)}\bigg\|_{L^p_w}\bigg\|_{L^q_{\mu}}\\
&+C\sum_{l=1}^\infty\bigg\|w(B(y,2^{l+1}r))^{1/{\alpha}-1/p-1/q}
\bigg\|\bigg(\sum_{j=1}^\infty\big|f_j\big|^2\bigg)^{1/2}\cdot\chi_{B(y,2^{l+1}r)}\bigg\|_{L^p_w}\bigg\|_{L^q_{\mu}}\\
&\times\big(l+1\big)\cdot\frac{w(B(y,r))^{1/{\alpha}-1/q}}{w(B(y,2^{l+1}r))^{1/{\alpha}-1/q}}\\
&\leq C\bigg\|\bigg(\sum_{j=1}^\infty\big|f_j\big|^2\bigg)^{1/2}\bigg\|_{(L^p,L^q)^{\alpha}(w;\mu)}
+C\bigg\|\bigg(\sum_{j=1}^\infty\big|f_j\big|^2\bigg)^{1/2}\bigg\|_{(L^p,L^q)^{\alpha}(w;\mu)}\\
&\times\sum_{l=1}^\infty\big(l+1\big)\cdot\frac{w(B(y,r))^{1/{\alpha}-1/q}}{w(B(y,2^{l+1}r))^{1/{\alpha}-1/q}}\\
&\leq C\bigg\|\bigg(\sum_{j=1}^\infty\big|f_j\big|^2\bigg)^{1/2}\bigg\|_{(L^p,L^q)^{\alpha}(w;\mu)}.
\end{split}
\end{equation*}
Thus, by taking the supremum over all $r>0$, we complete the proof of Theorem \ref{mainthm:3}.
\end{proof}

\section{Proof of Theorem \ref{mainthm:4}}
To show Theorem \ref{mainthm:4}, we first establish the following endpoint estimate for vector-valued commutator (\ref{vectorvaluedc}) in the weighted Lebesgue space $L^1_w(\mathbb R^n)$.

\begin{theorem}\label{mainthm:5}
Let $0<\gamma\leq1$, $p=1$, $w\in A_1$ and $b\in BMO(\mathbb R^n)$. Then for any given $\sigma>0$, there exists a constant $C>0$ independent of $\vec{f}=(f_1,f_2,\ldots)$ and $\sigma>0$ such that
\begin{equation}\label{end}
w\bigg(\bigg\{x\in\mathbb R^n:\bigg(\sum_{j=1}^\infty\big|\big[b,\mathcal S_{\gamma}\big](f_j)(x)\big|^2\bigg)^{1/2}>\sigma\bigg\}\bigg)
\leq C\int_{\mathbb R^n}\Phi\bigg(\frac{\|\vec{f}(x)\|_{\ell^2}}{\sigma}\bigg)\cdot w(x)\,dx,
\end{equation}
where $\Phi(t)=t\cdot(1+\log^+t)$ and $\big\|\vec{f}(x)\big\|_{\ell^2}=\Big(\sum_{j=1}^\infty|f_j(x)|^2\Big)^{1/2}$.
\end{theorem}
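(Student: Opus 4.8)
The plan is a Calderón--Zygmund argument adapted to the $\ell^{2}$-valued function $g:=\|\vec f(\cdot)\|_{\ell^2}$, the key point being that the extra logarithm on the right of \eqref{end} should come not from the commutator itself but from isolating its non-cancellative part and estimating that part by the \emph{plain} vector-valued square function $\mathcal S_\gamma$ (Theorem \ref{weak}) together with the weighted generalized Hölder inequality \eqref{Wholder}. Since $[b,\mathcal S_\gamma]$ is positively homogeneous of degree one in $\vec f$, replacing $\vec f$ by $\vec f/\sigma$ reduces \eqref{end} to $\sigma=1$, and we may assume $\int_{\mathbb R^n}\Phi(g)\,w<\infty$ (otherwise there is nothing to prove); a routine density argument also lets us take $\vec f$ with only finitely many nonzero components, each bounded with compact support. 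Next one runs the Calderón--Zygmund decomposition of $g$ at height $1$: there are pairwise disjoint dyadic cubes $\{Q_k\}$ with $1<|Q_k|^{-1}\int_{Q_k}g\le 2^n$ and $g\le1$ a.e.\ off $\Omega:=\bigcup_kQ_k$. Put $Q_k^{*}:=2\sqrt n\,Q_k$, $\Omega^{*}:=\bigcup_kQ_k^{*}$, and split $f_j=h_j+b_j$ with $b_{j,k}:=(f_j-(f_j)_{Q_k})\chi_{Q_k}$ (so $\int_{Q_k}b_{j,k}=0$), $b_j:=\sum_kb_{j,k}$ and $h_j:=f_j\chi_{\Omega^c}+\sum_k(f_j)_{Q_k}\chi_{Q_k}$. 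Minkowski's inequality for $\ell^2$ gives $\|\vec h(x)\|_{\ell^2}\le 2^n$ a.e.; and since $w\in A_1$ (hence $|Q_k|^{-1}w(Q_k)\le C\,w(x)$ for a.e.\ $x\in Q_k$, while $|Q_k|<\int_{Q_k}g$), one obtains $\int\|\vec h\|_{\ell^2}w\le C\int gw$ and $w(\Omega),\,w(\Omega^{*})\le C\sum_k|Q_k|^{-1}w(Q_k)\int_{Q_k}g\le C\int_\Omega gw\le C\int\Phi(g)w$. By subadditivity of $[b,\mathcal S_\gamma]$ in the vector variable it remains to estimate $w(\{[b,\mathcal S_\gamma](\vec h)>\tfrac12\})$ and $w(\{x\notin\Omega^{*}:[b,\mathcal S_\gamma](\vec b)(x)>\tfrac12\})$. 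The first is immediate: $w\in A_1\subset A_2$, so Theorem \ref{commutator thm} with $p=2$ and Chebyshev's inequality give $w(\{[b,\mathcal S_\gamma](\vec h)>\tfrac12\})\le C\int\|\vec h\|_{\ell^2}^2w\le C\,2^n\int\|\vec h\|_{\ell^2}w\le C\int gw\le C\int\Phi(g)w$.

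For the bad part, fix $x\notin\Omega^{*}$ and write $b(x)-b(z)=(b(x)-b_{Q_k})+(b_{Q_k}-b(z))$ for $z\in Q_k$. Using the subadditivity of $\sup_{\varphi\in\mathcal C_\gamma}$ together with Minkowski's inequality in $L^2(\Gamma(x),\tfrac{d\xi\,dt}{t^{n+1}})$ and in $\ell^2$ over $j$, one obtains
\[
[b,\mathcal S_\gamma](\vec b)(x)\le\sum_k\big|b(x)-b_{Q_k}\big|\,\mathcal S_\gamma(\vec b_{\cdot,k})(x)+\mathcal S_\gamma(\vec B)(x),
\]
where $\vec b_{\cdot,k}:=(b_{1,k},b_{2,k},\dots)$ and $\vec B:=(B_1,B_2,\dots)$ with $B_j:=\sum_k(b_{Q_k}-b)b_{j,k}$, so that $\|\vec B(z)\|_{\ell^2}=\sum_k|b(z)-b_{Q_k}|\,g_k(z)\chi_{Q_k}(z)$ for $g_k:=\|\vec f(\cdot)-\vec f_{Q_k}\|_{\ell^2}$. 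For the first (cancellative) sum, since each $b_{j,k}$ has mean zero and every $\varphi\in\mathcal C_\gamma$ is $\gamma$-Hölder, the computation behind \eqref{key estimate1} refines --- using $|\varphi_t(\xi-z)-\varphi_t(\xi-c_k)|\le t^{-n-\gamma}|z-c_k|^\gamma$ and the fact that inside $\Gamma(x)$ the integrand vanishes unless $t\gtrsim|x-c_k|$ --- to $\mathcal S_\gamma(\vec b_{\cdot,k})(x)\le C\,\ell(Q_k)^\gamma|x-c_k|^{-n-\gamma}\int_{Q_k}g_k$ for $x\notin Q_k^{*}$, where $c_k$ denotes the center of $Q_k$. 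Then, by Chebyshev, decomposing $(\Omega^{*})^{c}$ into dyadic annuli around $Q_k$, using parts $(i)$ and $(ii)$ of Lemma \ref{BMO} and \eqref{general weights} to get $\int_{2^{l+1}Q_k}|b-b_{Q_k}|\,w\le C(l+1)\|b\|_{*}\,2^{(l+1)n}w(Q_k)$, and summing the convergent series $\sum_l(l+1)2^{-l\gamma}$, this sum contributes at most $C\|b\|_{*}\sum_k|Q_k|^{-1}w(Q_k)\int_{Q_k}g_k\le C\|b\|_{*}\sum_k\int_{Q_k}gw\le C\|b\|_{*}\int\Phi(g)w$, where we used $w\in A_1$ and $\int_{Q_k}g_k\,w\le C\int_{Q_k}gw$ (which follows from $\|\vec f_{Q_k}\|_{\ell^2}\le|Q_k|^{-1}\int_{Q_k}g$ and the $A_1$ condition).

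The remaining term $\mathcal S_\gamma(\vec B)$ is the heart of the matter, and here the commutator estimate is of no use --- one must invoke the \emph{plain} vector-valued square function. By Theorem \ref{weak} (applicable since $w\in A_1$),
\[
w\big(\{\mathcal S_\gamma(\vec B)>\tfrac14\}\big)\le C\int_{\mathbb R^n}\|\vec B(z)\|_{\ell^2}\,w(z)\,dz=C\sum_k\int_{Q_k}\big|b(z)-b_{Q_k}\big|\,g_k(z)\,w(z)\,dz.
\]
Applying the weighted generalized Hölder inequality \eqref{Wholder} with the pair $(L\log L(w),\exp L(w))$ and the weighted John--Nirenberg bound $\|b-b_{Q_k}\|_{\exp L(w),Q_k}\le C\|b\|_{*}$ (valid for $w\in A_\infty$), each summand is $\le C\|b\|_{*}\,w(Q_k)\,\|g_k\|_{L\log L(w),Q_k}$. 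Since $g_k\le g+\|\vec f_{Q_k}\|_{\ell^2}$ with $\|\vec f_{Q_k}\|_{\ell^2}\le 2^n$ one has $\|g_k\|_{L\log L(w),Q_k}\le\|g\|_{L\log L(w),Q_k}+C$, while the convexity of $\Phi$ yields $w(Q_k)\|g\|_{L\log L(w),Q_k}\le w(Q_k)+\int_{Q_k}\Phi(g)w$; summing over $k$ and using $\sum_kw(Q_k)=w(\Omega)\le C\int\Phi(g)w$ gives $w(\{\mathcal S_\gamma(\vec B)>\tfrac14\})\le C\|b\|_{*}\int\Phi(g)w$. Adding the good part, $w(\Omega^{*})$, the cancellative sum and this last term, and undoing the normalization, establishes \eqref{end}. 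The main obstacle is exactly this final step: a naive weak-type estimate for $[b,\mathcal S_\gamma]$ only yields an extra $M(w)$ factor and never closes the $L\log L$ budget, so the decisive idea is to split off the non-cancellative piece $\vec B$ as a genuine $\mathcal S_\gamma$-object and to convert $\int_{Q_k}|b-b_{Q_k}|g_k\,w$ into an $L\log L(w)$-average, which is precisely what forces the function $\Phi(t)=t(1+\log^+t)$ on the right-hand side.
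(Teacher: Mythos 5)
Your proposal is correct and follows essentially the same route as the paper's proof: a Calder\'on--Zygmund decomposition of $\|\vec f\|_{\ell^2}$ at height $\sigma$, the $L^2_w$ commutator bound (Theorem~\ref{commutator thm}) for the good part, a trivial bound for $w(\Omega^*)$, and for the bad part off $\Omega^*$ the same splitting $b(x)-b(z)=(b(x)-b_{Q_k})+(b_{Q_k}-b(z))$, with the first piece handled by the H\"older-kernel estimate and Lemma~\ref{BMO}, and the second piece by the weak $(1,1)$ bound for $\mathcal S_\gamma$ (Theorem~\ref{weak}) together with the weighted generalized H\"older inequality \eqref{Wholder}, the weighted John--Nirenberg bound \eqref{Jensen}, and the equivalence \eqref{equiv norm with weight}. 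The only differences are cosmetic: you normalize $\sigma=1$ up front, you keep the local oscillation $g_k=\|\vec f-\vec f_{Q_k}\|_{\ell^2}$ throughout rather than switching back to $\|\vec f\|_{\ell^2}$ via $\|h_{ij}\|_{L^1}\le 2\int_{Q_i}|f_j|$, and you apply the weighted H\"older inequality before splitting $g_k\le g+2^n$ whereas the paper splits $\vec h=\vec f-\vec g$ first and then applies H\"older only to the $\vec f$-term; these reorderings yield the same estimate.
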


\begin{proof}
Inspired by the works in \cite{perez2,perez4,zhang}, for any fixed $\sigma>0$, we apply the Calder\'on--Zygmund decomposition of $\vec{f}=(f_1,f_2,\ldots)$ at height $\sigma$ to obtain a collection of disjoint non-overlapping dyadic cubes $\{Q_i\}$ such that the following property holds (see \cite{stein,perez4})
\begin{equation}\label{decomposition}
\sigma<\frac{1}{|Q_i|}\int_{Q_i}\bigg(\sum_{j=1}^\infty\big|f_j(y)\big|^2\bigg)^{1/2}dy\leq 2^n\cdot\sigma,
\end{equation}
where $Q_i=Q(c_i,\ell_i)$ denotes the cube centered at $c_i$ with side length $\ell_i$ and all cubes are
assumed to have their sides parallel to the coordinate axes. If we set $E=\bigcup_i Q_i$, then
\begin{equation*}
\bigg(\sum_{j=1}^\infty\big|f_j(y)\big|^2\bigg)^{1/2}\leq\sigma, \quad \mbox{a.e. }\, x\in\mathbb R^n\backslash E.
\end{equation*}
Now we proceed to construct vector-valued version of the Calder\'on--Zygmund decomposition. Define two vector-valued functions $\vec{g}=(g_1,g_2,\ldots)$ and $\vec{h}=(h_1,h_2,\ldots)$ as follows:
\begin{equation*}
g_j(x):=
\begin{cases}
f_j(x) &  \mbox{if}\;\; x\in E^c,\\
\displaystyle\frac{1}{|Q_i|}\int_{Q_i}f_j(y)\,dy    &  \mbox{if}\;\; x\in Q_i,
\end{cases}
\end{equation*}
and
\begin{equation*}
h_j(x):=f_j(x)-g_j(x)=\sum_i h_{ij}(x),\quad j=1,2,\dots,
\end{equation*}
where $h_{ij}(x)=h_j(x)\cdot\chi_{Q_i}(x)=\big(f_j(x)-g_j(x)\big)\cdot\chi_{Q_i}(x)$. Then one has
\begin{equation}\label{pointwise estimate g}
\bigg(\sum_{j=1}^\infty\big|g_j(x)\big|^2\bigg)^{1/2}\leq C\cdot\sigma, \quad \mbox{a.e. }\, x\in\mathbb R^n,
\end{equation}
and
\begin{equation}\label{f=g+h}
\vec{f}=\vec{g}+\vec{h}:=(g_1+h_1,g_2+h_2,\ldots).
\end{equation}
Obviously, $h_{ij}$ is supported on $Q_i$, $i,j=1,2,\dots$,
\begin{equation*}
\int_{\mathbb R^n}h_{ij}(x)\,dx=0,\quad \mbox{and}\quad \big\|h_{ij}\big\|_{L^1}=\int_{\mathbb R^n}\big|h_{ij}(x)\big|\,dx\leq 2\int_{Q_i}\big|f_j(x)\big|\,dx
\end{equation*}
according to the above decomposition. By \eqref{f=g+h} and Minkowski's inequality,
\begin{equation*}
\bigg(\sum_{j=1}^\infty\big|\big[b,\mathcal S_{\gamma}\big](f_j)(x)\big|^2\bigg)^{1/2}
\leq\bigg(\sum_{j=1}^\infty\big|\big[b,\mathcal S_{\gamma}\big](g_j)(x)\big|^2\bigg)^{1/2}+
\bigg(\sum_{j=1}^\infty\big|\big[b,\mathcal S_{\gamma}\big](h_j)(x)\big|^2\bigg)^{1/2}.
\end{equation*}
Then we can write
\begin{equation*}
\begin{split}
&w\bigg(\bigg\{x\in\mathbb R^n:\bigg(\sum_{j=1}^\infty\big|\big[b,\mathcal S_{\gamma}\big](f_j)(x)\big|^2\bigg)^{1/2}>\sigma\bigg\}\bigg)\\
&\leq w\bigg(\bigg\{x\in\mathbb R^n:\bigg(\sum_{j=1}^\infty\big|\big[b,\mathcal S_{\gamma}\big](g_j)(x)\big|^2\bigg)^{1/2}>\sigma/2\bigg\}\bigg)\\
&+w\bigg(\bigg\{x\in\mathbb R^n:\bigg(\sum_{j=1}^\infty\big|\big[b,\mathcal S_{\gamma}\big](h_j)(x)\big|^2\bigg)^{1/2}>\sigma/2\bigg\}\bigg)\\
&:=K_1+K_2.
\end{split}
\end{equation*}
Observe that $w\in A_1\subset A_2$. Applying Chebyshev's inequality and Theorem \ref{commutator thm}, we obtain
\begin{equation*}
K_1\leq \frac{4}{\sigma^2}\cdot\bigg\|\bigg(\sum_{j=1}^\infty\big|\big[b,\mathcal S_{\gamma}\big](g_j)\big|^2\bigg)^{1/2}\bigg\|^2_{L^2_w}
\leq \frac{C}{\sigma^2}\cdot\bigg\|\bigg(\sum_{j=1}^\infty\big|g_j\big|^2\bigg)^{1/2}\bigg\|^2_{L^2_w}.
\end{equation*}
Moreover, in view of (\ref{pointwise estimate g}), one has
\begin{align*}
&\bigg\|\bigg(\sum_{j=1}^\infty\big|g_j\big|^2\bigg)^{1/2}\bigg\|^2_{L^2_w}
\leq C\cdot\sigma\int_{\mathbb R^n}\bigg(\sum_{j=1}^\infty\big|g_j(x)\big|^2\bigg)^{1/2}w(x)\,dx\\
&\leq C\cdot\sigma\Bigg(\int_{E^c}\bigg(\sum_{j=1}^\infty\big|f_j(x)\big|^2\bigg)^{1/2}w(x)\,dx
+\int_{\bigcup_i Q_i}\bigg(\sum_{j=1}^\infty\big|g_j(x)\big|^2\bigg)^{1/2}w(x)\,dx\Bigg).
\end{align*}
Recall that $g_j(x)=\frac{1}{|Q_i|}\int_{Q_i}f_j(y)\,dy$ when $x\in Q_i$. As before, by using duality and Cauchy--Schwarz inequality, we can see the following estimate is valid for all $x\in Q_i$.
\begin{equation}\label{vectorg}
\bigg(\sum_{j=1}^\infty\big|g_j(x)\big|^2\bigg)^{1/2}\leq\frac{1}{|Q_i|}\int_{Q_i}\bigg(\sum_{j=1}^\infty\big|f_j(y)\big|^2\bigg)^{1/2}dy.
\end{equation}
This estimate \eqref{vectorg} along with the $A_1$ condition yields
\begin{align}\label{g}
&\bigg\|\bigg(\sum_{j=1}^\infty\big|g_j\big|^2\bigg)^{1/2}\bigg\|^2_{L^2_w}\le C\cdot\sigma\Bigg(\int_{\mathbb R^n}\bigg(\sum_{j=1}^\infty\big|f_j(x)\big|^2\bigg)^{1/2}w(x)\,dx
+\sum_i\frac{w(Q_i)}{|Q_i|}\int_{Q_i}\bigg(\sum_{j=1}^\infty\big|f_j(y)\big|^2\bigg)^{1/2}dy\Bigg)\notag\\
&\leq C\cdot\sigma\Bigg(\int_{\mathbb R^n}\bigg(\sum_{j=1}^\infty\big|f_j(x)\big|^2\bigg)^{1/2}w(x)\,dx
+\sum_i\underset{y\in Q_i}{\mbox{ess\,inf}}\,w(y)\int_{Q_i}\bigg(\sum_{j=1}^\infty\big|f_j(y)\big|^2\bigg)^{1/2}dy\Bigg)\notag\\
&\leq C\cdot\sigma\Bigg(\int_{\mathbb R^n}\bigg(\sum_{j=1}^\infty\big|f_j(x)\big|^2\bigg)^{1/2}w(x)\,dx
+\int_{\bigcup_i Q_i}\bigg(\sum_{j=1}^\infty\big|f_j(y)\big|^2\bigg)^{1/2}w(y)\,dy\Bigg)\notag\\
&\le C\cdot\sigma\int_{\mathbb R^n}\bigg(\sum_{j=1}^\infty\big|f_j(x)\big|^2\bigg)^{1/2}w(x)\,dx.
\end{align}
So we have
\begin{equation*}
K_1\leq C\int_{\mathbb R^n}\frac{\|\vec{f}(x)\|_{\ell^2}}{\sigma}\cdot w(x)\,dx\leq C\int_{\mathbb R^n}\Phi\bigg(\frac{\|\vec{f}(x)\|_{\ell^2}}{\sigma}\bigg)\cdot w(x)\,dx,
\end{equation*}
where the last inequality is due to $t\leq\Phi(t)=t\cdot(1+\log^+t)$ for any $t>0$. To deal with the other term $K_2$, let $Q_i^*=2\sqrt n Q_i$ be the cube concentric with $Q_i$ such that $\ell(Q_i^*)=(2\sqrt n)\ell(Q_i)$. Then we can further decompose $K_2$ as follows.
\begin{equation*}
\begin{split}
K_2\le&\,w\bigg(\bigg\{x\in\bigcup_i Q_i^*:\bigg(\sum_{j=1}^\infty\big|\big[b,\mathcal S_{\gamma}\big](h_j)(x)\big|^2\bigg)^{1/2}>\sigma/2\bigg\}\bigg)\\
&+w\bigg(\bigg\{x\notin \bigcup_i Q_i^*:\bigg(\sum_{j=1}^\infty\big|\big[b,\mathcal S_{\gamma}\big](h_j)(x)\big|^2\bigg)^{1/2}>\sigma/2\bigg\}\bigg)\\
:=&\,K_3+K_4.
\end{split}
\end{equation*}
Since $w\in A_1$, then by the inequality (\ref{weights}), we can get
\begin{equation*}
K_3\leq\sum_i w\big(Q_i^*\big)\le C\sum_i w(Q_i).
\end{equation*}
Furthermore, it follows from the inequality (\ref{decomposition}) and the $A_1$ condition that
\begin{equation*}
\begin{split}
K_3&\leq C\sum_i\frac{\,1\,}{\sigma}\cdot\underset{y\in Q_i}{\mbox{ess\,inf}}\,w(y)
\int_{Q_i}\bigg(\sum_{j=1}^\infty\big|f_j(y)\big|^2\bigg)^{1/2}dy\\
&\leq\frac{C}{\sigma}\sum_i\int_{Q_i}\bigg(\sum_{j=1}^\infty\big|f_j(y)\big|^2\bigg)^{1/2}w(y)\,dy
\leq\frac{C}{\sigma}\int_{\bigcup_i Q_i}\bigg(\sum_{j=1}^\infty\big|f_j(y)\big|^2\bigg)^{1/2}w(y)\,dy\\
&\leq C\int_{\mathbb R^n}\frac{\|\vec{f}(y)\|_{\ell^2}}{\sigma}\cdot w(y)\,dy
\leq C\int_{\mathbb R^n}\Phi\bigg(\frac{\|\vec{f}(y)\|_{\ell^2}}{\sigma}\bigg)\cdot w(y)\,dy,
\end{split}
\end{equation*}
where the last inequality is also due to $t\leq\Phi(t)$ for any $t>0$. Arguing as in the proof of Theorem \ref{mainthm:3}, for any given $x\in \mathbb R^n$, $(y,t)\in\Gamma(x)$ and for $j=1,2,\dots$, we also find that
\begin{align*}
&\sup_{\varphi\in{\mathcal C}_\gamma}\bigg|\int_{\mathbb R^n}\big[b(x)-b(z)\big]\varphi_t(y-z)\sum_i h_{ij}(z)\,dz\bigg|\notag\\
&\leq\sup_{\varphi\in{\mathcal C}_\gamma}\bigg|\sum_i \big[b(x)-b_{Q_i}\big]\int_{\mathbb R^n}\varphi_t(y-z)h_{ij}(z)\,dz\bigg|\notag\\
&+\sup_{\varphi\in{\mathcal C}_\gamma}\bigg|\int_{\mathbb R^n}\varphi_t(y-z)\sum_i \big[b_{Q_i}-b(z)\big]h_{ij}(z)\,dz\bigg|\\
&\leq\sum_i\big|b(x)-b_{Q_i}\big|\cdot\sup_{\varphi\in{\mathcal C}_\gamma}\bigg|\int_{\mathbb R^n}\varphi_t(y-z)h_{ij}(z)\,dz\bigg|\notag\\
&+\sup_{\varphi\in{\mathcal C}_\gamma}\bigg|\int_{\mathbb R^n}\varphi_t(y-z)\sum_i \big[b_{Q_i}-b(z)\big]h_{ij}(z)\,dz\bigg|.
\end{align*}
Hence, by definition, we have that for any given $x\in \mathbb R^n$ and $j\in\mathbb Z^+$,
\begin{equation}\label{commutator estimate}
\begin{split}
\big|\big[b,\mathcal S_\gamma\big](h_j)(x)\big|
&\leq\sum_i\big|b(x)-b_{Q_i}\big|\cdot\mathcal S_\gamma(h_{ij})(x)
+\mathcal S_\gamma\bigg(\sum_i[b_{Q_i}-b]h_{ij}\bigg)(x).
\end{split}
\end{equation}
On the other hand, by duality argument and Cauchy--Schwarz inequality, we can see the following vector-valued form of Minkowski's inequality is true for any real numbers $\nu_{ij}\in\mathbb R$, $i,j=1,2,\dots$.
\begin{equation}\label{min}
\bigg(\sum_{j}\bigg|\sum_i\big|\nu_{ij}\big|\bigg|^2\bigg)^{1/2}\leq\sum_i\bigg(\sum_{j}\big|\nu_{ij}\big|^2\bigg)^{1/2}.
\end{equation}
In view of the estimates (\ref{commutator estimate}) and (\ref{min}), we get
\begin{equation*}
\begin{split}
\bigg(\sum_{j=1}^\infty\big|\big[b,\mathcal S_{\gamma}\big](h_j)(x)\big|^2\bigg)^{1/2}
&\leq\bigg(\sum_{j=1}^\infty\bigg|\sum_i\big|b(x)-b_{Q_i}\big|\cdot\mathcal S_\gamma(h_{ij})(x)\bigg|^2\bigg)^{1/2}\\
&+\bigg(\sum_{j=1}^\infty\bigg|\mathcal S_\gamma\bigg(\sum_i[b_{Q_i}-b]h_{ij}\bigg)(x)\bigg|^2\bigg)^{1/2}\\
&\leq\sum_i\big|b(x)-b_{Q_i}\big|\cdot\bigg(\sum_{j=1}^\infty\big|\mathcal S_\gamma(h_{ij})(x)\big|^2\bigg)^{1/2}\\
&+\bigg(\sum_{j=1}^\infty\bigg|\mathcal S_\gamma\bigg(\sum_i[b_{Q_i}-b]h_{ij}\bigg)(x)\bigg|^2\bigg)^{1/2}.
\end{split}
\end{equation*}
Therefore, the term $K_4$ can be divided into two parts as follows:
\begin{equation*}
\begin{split}
K_4\leq &w\bigg(\bigg\{x\notin \bigcup_i Q_i^*:
\sum_i\big|b(x)-b_{Q_i}\big|\cdot\bigg(\sum_{j=1}^\infty\big|\mathcal S_\gamma(h_{ij})(x)\big|^2\bigg)^{1/2}>\sigma/4\bigg\}\bigg)\\
&+w\bigg(\bigg\{x\notin \bigcup_i Q_i^*:
\bigg(\sum_{j=1}^\infty\bigg|\mathcal S_\gamma\bigg(\sum_i[b_{Q_i}-b]h_{ij}\bigg)(x)\bigg|^2\bigg)^{1/2}>\sigma/4\bigg\}\bigg)\\
:=&K_5+K_6.
\end{split}
\end{equation*}
It follows directly from the Chebyshev's inequality that
\begin{equation*}
\begin{split}
K_5&\leq\frac{\,4\,}{\sigma}\int_{\mathbb R^n\backslash\bigcup_i Q_i^*}\sum_i\big|b(x)-b_{Q_i}\big|\cdot
\bigg(\sum_{j=1}^\infty\big|\mathcal S_\gamma(h_{ij})(x)\big|^2\bigg)^{1/2}w(x)\,dx\\
&\leq\frac{\,4\,}{\sigma}\sum_i\Bigg(\int_{(Q_i^*)^c}\big|b(x)-b_{Q_i}\big|\cdot
\bigg(\sum_{j=1}^\infty\big|\mathcal S_\gamma(h_{ij})(x)\big|^2\bigg)^{1/2}w(x)\,dx\Bigg).\\
\end{split}
\end{equation*}
Denote by $c_i$ the center of $Q_i$. For any $\varphi\in{\mathcal C}_\gamma$, $0<\gamma\le1$, by the cancellation condition of $h_{ij}$ over $Q_i$, we obtain that for any $(y,t)\in\Gamma(x)$ and for $i,j=1,2,\dots$,
\begin{align}\label{kernel estimate1}
\big|(\varphi_t*h_{ij})(y)\big|&=\left|\int_{Q_i}\big[\varphi_t(y-z)-\varphi_t(y-c_i)\big]h_{ij}(z)\,dz\right|\notag\\
&\leq\int_{Q_i\cap\{z:|z-y|\le t\}}\frac{|z-c_i|^\gamma}{t^{n+\gamma}}\big|h_{ij}(z)\big|\,dz\notag\\
&\leq C\cdot\frac{\ell(Q_i)^{\gamma}}{t^{n+\gamma}}\int_{Q_i\cap\{z:|z-y|\le t\}}\big|h_{ij}(z)\big|\,dz.
\end{align}
In addition, for any $z\in Q_i$ and $x\in (Q^*_i)^c$, we have $|z-c_i|<\frac{|x-c_i|}{2}$. Thus, for all $(y,t)\in\Gamma(x)$ and $|z-y|\le t$ with $z\in Q_i$, it is easy to see that
\begin{equation}\label{2t}
t+t\ge|x-y|+|y-z|\ge|x-z|\ge|x-c_i|-|z-c_i|\ge\frac{|x-c_i|}{2}.
\end{equation}
Hence, for any $x\in (Q^*_i)^c$, by using the above inequalities (\ref{kernel estimate1}) and (\ref{2t}) along with the fact that $ \big\|h_{ij}\big\|_{L^1}\leq 2\int_{Q_i}\big|f_j(x)\big|\,dx$, we obtain that for any $i,j=1,2,\dots$,
\begin{equation*}
\begin{split}
\big|\mathcal S_{\gamma}(h_{ij})(x)\big|&=\left(\iint_{\Gamma(x)}
\bigg[\sup_{\varphi\in{\mathcal C}_\gamma}\big|(\varphi_t*{h_{ij}})(y)\big|\bigg]^2\frac{dydt}{t^{n+1}}\right)^{1/2}\\
&\leq C\cdot\ell(Q_i)^{\gamma}\bigg(\int_{Q_i}\big|h_{ij}(z)\big|\,dz\bigg)
\bigg(\int_{\frac{|x-c_i|}{4}}^\infty\int_{|y-x|<t}\frac{dydt}{t^{2(n+\gamma)+n+1}}\bigg)^{1/2}\\
&\leq C\cdot\ell(Q_i)^{\gamma}\bigg(\int_{Q_i}\big|h_{ij}(z)\big|\,dz\bigg)
\bigg(\int_{\frac{|x-c_i|}{4}}^\infty\frac{dt}{t^{2(n+\gamma)+1}}\bigg)^{1/2}\\
&\leq C\cdot\frac{\ell(Q_i)^{\gamma}}{|x-c_i|^{n+\gamma}}\bigg(\int_{Q_i}\big|f_j(z)\big|\,dz\bigg).
\end{split}
\end{equation*}
Furthermore, by duality and Cauchy--Schwarz inequality again, one has
\begin{equation*}
\bigg(\sum_{j=1}^\infty\big|\mathcal S_\gamma(h_{ij})(x)\big|^2\bigg)^{1/2}\leq
C\cdot\frac{\ell(Q_i)^{\gamma}}{|x-c_i|^{n+\gamma}}\times\int_{Q_i}\bigg(\sum_{j=1}^\infty\big|f_j(z)\big|^2\bigg)^{1/2}dz.
\end{equation*}
Since $Q_i^*=2\sqrt n Q_i\supset 2Q_i$, then $(Q_i^*)^c\subset (2Q_i)^c$. This fact together with the pointwise estimate derived above yields
\begin{equation*}
\begin{split}
K_5&\leq\frac{C}{\sigma}\sum_i
\left(\ell(Q_i)^{\gamma}\int_{Q_i}\bigg(\sum_{j=1}^\infty\big|f_j(z)\big|^2\bigg)^{1/2}dz
\times\int_{(Q_i^*)^c}\big|b(x)-b_{Q_i}\big|\cdot\frac{w(x)}{|x-c_i|^{n+\gamma}}dx\right)\\
&\leq\frac{C}{\sigma}\sum_i
\left(\ell(Q_i)^{\gamma}\int_{Q_i}\bigg(\sum_{j=1}^\infty\big|f_j(z)\big|^2\bigg)^{1/2}dz
\times\int_{(2Q_i)^c}\big|b(x)-b_{Q_i}\big|\cdot\frac{w(x)}{|x-c_i|^{n+\gamma}}dx\right)\\
\end{split}
\end{equation*}
\begin{equation*}
\begin{split}
&\leq\frac{C}{\sigma}\sum_i
\left(\ell(Q_i)^{\gamma}\int_{Q_i}\bigg(\sum_{j=1}^\infty\big|f_j(z)\big|^2\bigg)^{1/2}dz
\times\sum_{l=1}^\infty\int_{2^{l+1}Q_i\backslash 2^{l}Q_i}\big|b(x)-b_{2^{l+1}Q_i}\big|\cdot\frac{w(x)}{|x-c_i|^{n+\gamma}}dx\right)\\
&+\frac{C}{\sigma}\sum_i
\left(\ell(Q_i)^{\gamma}\int_{Q_i}\bigg(\sum_{j=1}^\infty\big|f_j(z)\big|^2\bigg)^{1/2}dz\times\sum_{l=1}^\infty\int_{2^{l+1}Q_i\backslash 2^{l}Q_i}\big|b_{2^{l+1}Q_i}-b_{Q_i}\big|\cdot\frac{w(x)}{|x-c_i|^{n+\gamma}}dx\right)\\
&:=\mbox{\upshape I+II}.
\end{split}
\end{equation*}
For the term I, it then follows from $(ii)$ of Lemma \ref{BMO}(consider $2^{l+1}Q_i$ instead of $B$), (\ref{general weights}) and the assumption $w\in A_1$ that
\begin{equation*}
\begin{split}
\mbox{\upshape I}&\leq\frac{C}{\sigma}\sum_i
\left(\ell(Q_i)^{\gamma}\int_{Q_i}\bigg(\sum_{j=1}^\infty\big|f_j(z)\big|^2\bigg)^{1/2}dz\times\sum_{l=1}^\infty\frac{1}{[2^{l-1}\ell(Q_i)]^{n+\gamma}}
\int_{2^{l+1}Q_i}\big|b(x)-b_{2^{l+1}Q_i}\big|\cdot w(x)\,dx\right)\\
&\leq\frac{C\cdot\|b\|_*}{\sigma}\sum_i\left(\int_{Q_i}\bigg(\sum_{j=1}^\infty\big|f_j(z)\big|^2\bigg)^{1/2}dz
\times\sum_{l=1}^\infty\frac{w\big(2^{l+1}Q_i\big)}{(2^{l-1})^{n+\gamma}|Q_i|}\right)\\
&\leq\frac{C\cdot\|b\|_*}{\sigma}\sum_i\left(\int_{Q_i}\bigg(\sum_{j=1}^\infty\big|f_j(z)\big|^2\bigg)^{1/2}dz
\times\sum_{l=1}^\infty\frac{(2^{l+1})^nw\big(Q_i\big)}{(2^{l-1})^{n+\gamma}|Q_i|}\right)\\
&\leq\frac{C}{\sigma}\sum_i\left(\frac{w\big(Q_i\big)}{|Q_i|}\cdot\int_{Q_i}\bigg(\sum_{j=1}^\infty\big|f_j(z)\big|^2\bigg)^{1/2}dz
\times\sum_{l=1}^\infty\frac{1}{2^{l\gamma}}\right)\\
&\leq\frac{C}{\sigma}\sum_i\underset{z\in Q_i}{\mbox{ess\,inf}}\,w(z)\int_{Q_i}\bigg(\sum_{j=1}^\infty\big|f_j(z)\big|^2\bigg)^{1/2}dz
\leq \frac{C}{\sigma}\int_{\bigcup_i Q_i}\bigg(\sum_{j=1}^\infty\big|f_j(z)\big|^2\bigg)^{1/2}w(z)\,dz\\
&\leq C\int_{\mathbb R^n}\frac{\|\vec{f}(z)\|_{\ell^2}}{\sigma}\cdot w(z)\,dz
\leq C\int_{\mathbb R^n}\Phi\bigg(\frac{\|\vec{f}(z)\|_{\ell^2}}{\sigma}\bigg)\cdot w(z)\,dz.
\end{split}
\end{equation*}
For the term II, from $(i)$ of Lemma \ref{BMO} and (\ref{general weights}) along with the assumption $w\in A_1$, it then follows that
\begin{equation*}
\begin{split}
\mbox{\upshape II}&\leq\frac{C\cdot\|b\|_*}{\sigma}\sum_i\left(\ell(Q_i)^{\gamma}\int_{Q_i}\bigg(\sum_{j=1}^\infty\big|f_j(z)\big|^2\bigg)^{1/2}dz
\times\sum_{l=1}^\infty\big(l+1\big)\cdot\frac{w\big(2^{l+1}Q_i\big)}{[2^{l-1}\ell(Q_i)]^{n+\gamma}}\right)\\
&\leq\frac{C\cdot\|b\|_*}{\sigma}\sum_i\left(\int_{Q_i}\bigg(\sum_{j=1}^\infty\big|f_j(z)\big|^2\bigg)^{1/2}dz
\times\sum_{l=1}^\infty\big(l+1\big)\cdot\frac{(2^{l+1})^nw\big(Q_i\big)}{(2^{l-1})^{n+\gamma}|Q_i|}\right)\\
&\leq\frac{C}{\sigma}\sum_i\left(\frac{w\big(Q_i\big)}{|Q_i|}\cdot\int_{Q_i}\bigg(\sum_{j=1}^\infty\big|f_j(z)\big|^2\bigg)^{1/2}dz
\times\sum_{l=1}^\infty\frac{l+1}{2^{l\gamma}}\right)\\
&\leq\frac{C}{\sigma}\sum_i\left(\frac{w\big(Q_i\big)}{|Q_i|}\cdot\int_{Q_i}\bigg(\sum_{j=1}^\infty\big|f_j(z)\big|^2\bigg)^{1/2}dz\right)
\leq C\int_{\mathbb R^n}\Phi\bigg(\frac{\|\vec{f}(z)\|_{\ell^2}}{\sigma}\bigg)\cdot w(z)\,dz.
\end{split}
\end{equation*}
On the other hand, by using the weighted weak-type (1,1) estimate of vector-valued intrinsic square function (see Theorem \ref{weak}) and (\ref{min}), we have
\begin{equation*}
\begin{split}
K_6&\leq\frac{C}{\sigma}\int_{\mathbb R^n}\bigg(\sum_{j=1}^\infty\bigg|\sum_i\big|b(x)-b_{Q_i}\big|\big|h_{ij}(x)\big|\bigg|^2\bigg)^{1/2}w(x)\,dx\\
&\leq\frac{C}{\sigma}\int_{\mathbb R^n}\sum_i\big|b(x)-b_{Q_i}\big|\bigg(\sum_{j=1}^\infty\big|h_{ij}(x)\big|^2\bigg)^{1/2}w(x)\,dx\\
&=\frac{C}{\sigma}\sum_i\int_{Q_i}\big|b(x)-b_{Q_i}\big| \bigg(\sum_{j=1}^\infty\big|h_{j}(x)\big|^2\bigg)^{1/2}w(x)\,dx\\
&\leq\frac{C}{\sigma}\sum_i\int_{Q_i}\big|b(x)-b_{Q_i}\big|\bigg(\sum_{j=1}^\infty\big|f_j(x)\big|^2\bigg)^{1/2}w(x)\,dx\\
&+\frac{C}{\sigma}\sum_i\frac{1}{|Q_i|}\int_{Q_i}\bigg(\sum_{j=1}^\infty\big|f_j(y)\big|^2\bigg)^{1/2}dy\times\int_{Q_i}\big|b(x)-b_{Q_i}\big|w(x)\,dx\\
&:=\mbox{\upshape III+IV}.
\end{split}
\end{equation*}
For the term III, by the generalized H\"older's inequality (\ref{Wholder}), we can deduce that
\begin{equation*}
\begin{split}
\mbox{\upshape III}=&\frac{C}{\sigma}\sum_i w(Q_i)\cdot\frac{1}{w(Q_i)}\int_{Q_i}\big|b(x)-b_{Q_i}\big|\bigg(\sum_{j=1}^\infty\big|f_j(x)\big|^2\bigg)^{1/2}w(x)\,dx\\
\leq&\frac{C}{\sigma}\sum_i w(Q_i)\cdot\big\|b-b_{Q_i}\big\|_{\exp L(w),Q_i}
\bigg\|\bigg(\sum_{j=1}^\infty\big|f_j\big|^2\bigg)^{1/2}\bigg\|_{L\log L(w),Q_i}\\
\leq&\frac{C\cdot\|b\|_*}{\sigma}\sum_i w(Q_i)\cdot
\bigg\|\bigg(\sum_{j=1}^\infty\big|f_j\big|^2\bigg)^{1/2}\bigg\|_{L\log L(w),Q_i}.\\
\end{split}
\end{equation*}
In the last inequality, we have used the well-known fact that (see \cite{zhang})
\begin{equation}\label{Jensen}
\big\|b-b_{Q}\big\|_{\exp L(w),Q}\leq C\|b\|_*,\qquad \mbox{for any cube }Q\subset\mathbb R^n.
\end{equation}
It is equivalent to the inequality
\begin{equation*}
\frac{1}{w(Q)}\int_Q\exp\bigg(\frac{|b(y)-b_Q|}{c_0\|b\|_*}\bigg)w(y)\,dy\leq C,
\end{equation*}
which is just a corollary of the well-known John--Nirenberg's inequality (see \cite{john}) and the comparison property of $A_1$ weights. Moreover, it can be shown that for every cube $Q\subset\mathbb R^n$ and $w\in A_1$ (see \cite{rao,zhang}),
\begin{equation}\label{equiv norm with weight}
\big\|f\big\|_{L\log L(w),Q}\approx \inf_{\eta>0}\left\{\eta+\frac{\eta}{w(Q)}\int_Q\Phi\left(\frac{|f(y)|}{\eta}\right)\cdot w(y)\,dy\right\}.
\end{equation}
Hence, it is concluded from \eqref{equiv norm with weight} that
\begin{equation*}
\begin{split}
\mbox{\upshape III}\leq&\frac{C\cdot\|b\|_*}{\sigma}\sum_i w(Q_i)\cdot
\inf_{\eta>0}\left\{\eta+\frac{\eta}{w(Q_i)}\int_{Q_i}\Phi\bigg(\frac{\|\vec{f}(y)\|_{\ell^2}}{\eta}\bigg)\cdot w(y)\,dy\right\}\\
\leq&\frac{C\cdot\|b\|_*}{\sigma}\sum_i w(Q_i)\cdot
\left\{\sigma+\frac{\sigma}{w(Q_i)}\int_{Q_i}\Phi\bigg(\frac{\|\vec{f}(y)\|_{\ell^2}}{\sigma}\bigg)\cdot w(y)\,dy\right\}\\
\leq& C\left\{\sum_i w(Q_i)+\sum_i\int_{Q_i}\Phi\bigg(\frac{\|\vec{f}(y)\|_{\ell^2}}{\sigma}\bigg)\cdot w(y)\,dy\right\}\\
\leq& C\int_{\mathbb R^n}\Phi\bigg(\frac{\|\vec{f}(y)\|_{\ell^2}}{\sigma}\bigg)\cdot w(y)\,dy.
\end{split}
\end{equation*}
For the term IV, by $(ii)$ of Lemma \ref{BMO}(consider $Q_i$ instead of $B$) and the assumption $w\in A_1$, we conclude that
\begin{equation*}
\begin{split}
\mbox{\upshape IV}\leq&\frac{C\cdot\|b\|_*}{\sigma}\sum_i\frac{w(Q_i)}{|Q_i|}\int_{Q_i}\bigg(\sum_{j=1}^\infty\big|f_j(y)\big|^2\bigg)^{1/2}dy\\
\leq&\frac{C\cdot\|b\|_*}{\sigma}\sum_i\int_{Q_i}\bigg(\sum_{j=1}^\infty\big|f_j(y)\big|^2\bigg)^{1/2}w(y)\,dy\\
\leq&\frac{C\cdot\|b\|_*}{\sigma}\int_{\bigcup_i Q_i}\bigg(\sum_{j=1}^\infty\big|f_j(y)\big|^2\bigg)^{1/2}w(y)\,dy\\
\leq& C\int_{\mathbb R^n}\frac{\|\vec{f}(y)\|_{\ell^2}}{\sigma}\cdot w(y)\,dy
\leq C\int_{\mathbb R^n}\Phi\bigg(\frac{\|\vec{f}(y)\|_{\ell^2}}{\sigma}\bigg)\cdot w(y)\,dy.
\end{split}
\end{equation*}
Summing up all the above estimates, we get the desired inequality \eqref{end}.
\end{proof}

\begin{proof}[Proof of Theorem $\ref{mainthm:4}$]
For any fixed ball $B=B(y,r)$ in $\mathbb R^n$, as before, we represent $f_j$ as $f_j=f^0_j+f^\infty_j$, where $f^0_j=f_j\cdot\chi_{2B}$ and $f^\infty_j=f_j\cdot\chi_{(2B)^c}$, $j=1,2,\ldots$. Then for any given $\sigma>0$, one can write
\begin{align}\label{Jprime}
&w(B(y,r))^{1/{\alpha}-1-1/q}\cdot
w\bigg(\bigg\{x\in B(y,r):\bigg(\sum_{j=1}^\infty\big|\big[b,\mathcal S_{\gamma}\big](f_j)(x)\big|^2\bigg)^{1/2}>\sigma\bigg\}\bigg)\notag\\
\leq &w(B(y,r))^{1/{\alpha}-1-1/q}\cdot
w\bigg(\bigg\{x\in B(y,r):\bigg(\sum_{j=1}^\infty\big|\big[b,\mathcal S_{\gamma}\big](f^0_j)(x)\big|^2\bigg)^{1/2}>\sigma/2\bigg\}\bigg)\notag\\
&+w(B(y,r))^{1/{\alpha}-1-1/q}\cdot
w\bigg(\bigg\{x\in B(y,r):\bigg(\sum_{j=1}^\infty\big|\big[b,\mathcal S_{\gamma}\big](f^\infty_j)(x)\big|^2\bigg)^{1/2}>\sigma/2\bigg\}\bigg)\notag\\
:=&J'_1(y,r)+J'_2(y,r).
\end{align}
By using Theorem \ref{mainthm:5}, we obtain
\begin{equation*}
\begin{split}
J'_1(y,r)&\leq C\cdot w(B(y,r))^{1/{\alpha}-1-1/q}\int_{\mathbb R^n}\Phi\bigg(\frac{\|\vec{f}_0(x)\|_{\ell^2}}{\sigma}\bigg)\cdot w(x)\,dx\\
&= C\cdot w(B(y,r))^{1/{\alpha}-1-1/q}\int_{B(y,2r)}\Phi\bigg(\frac{\|\vec{f}(x)\|_{\ell^2}}{\sigma}\bigg)\cdot w(x)\,dx\\
&= C\cdot\frac{w(B(y,r))^{1/{\alpha}-1-1/q}}{w(B(y,2r))^{1/{\alpha}-1-1/q}}\cdot\frac{w(B(y,2r))^{1/{\alpha}-1/q}}{w(B(y,2r))}
\int_{B(y,2r)}\Phi\bigg(\frac{\|\vec{f}(x)\|_{\ell^2}}{\sigma}\bigg)\cdot w(x)\,dx.
\end{split}
\end{equation*}
Here we use the following notation:
\begin{equation*}
\big\|\vec{f}_0(x)\big\|_{\ell^2}:=\bigg(\sum_{j=1}^\infty|f^0_j(x)|^2\bigg)^{1/2}=\bigg(\sum_{j=1}^\infty|f_j(x)\cdot\chi_{2B}|^2\bigg)^{1/2}.
\end{equation*}
Moreover, since $w\in A_1$, then by inequalities \eqref{doubling2} and \eqref{main esti1}, we have
\begin{equation}\label{WJ1yr}
\begin{split}
J'_1(y,r)&\leq C\cdot\frac{w(B(y,2r))^{1/{\alpha}-1/q}}{w(B(y,2r))}
\int_{B(y,2r)}\Phi\bigg(\frac{\|\vec{f}(x)\|_{\ell^2}}{\sigma}\bigg)\cdot w(x)\,dx\\
&\leq C\cdot w(B(y,2r))^{1/{\alpha}-1/q}\bigg\|\Phi\bigg(\frac{\|\vec{f}(\cdot)\|_{\ell^2}}{\sigma}\bigg)\bigg\|_{L\log L(w),B(y,2r)}.
\end{split}
\end{equation}
We now turn to deal with the term $J'_2(y,r)$. Recall that the following inequality
\begin{equation*}
\begin{split}
\bigg(\sum_{j=1}^\infty\big|\big[b,\mathcal S_\gamma\big](f^\infty_j)(x)\big|^2\bigg)^{1/2}
&\leq\big|b(x)-b_{B(y,r)}\big|\bigg(\sum_{j=1}^\infty\big|\mathcal S_\gamma(f^{\infty}_j)(x)\big|^2\bigg)^{1/2}\\
&+\bigg(\sum_{j=1}^\infty\Big|\mathcal S_\gamma\Big([b_{B(y,r)}-b]f^\infty_j\Big)(x)\Big|^2\bigg)^{1/2}.
\end{split}
\end{equation*}
is valid. Thus, we can further decompose $J'_2(y,r)$ as
\begin{equation*}
\begin{split}
J'_2(y,r)\leq&w(B(y,r))^{1/{\alpha}-1-1/q}\cdot
w\bigg(\bigg\{x\in B(y,r):\big|b(x)-b_{B(y,r)}\big|\cdot
\bigg(\sum_{j=1}^\infty\big|\mathcal S_\gamma(f^{\infty}_j)(x)\big|^2\bigg)^{1/2}>\sigma/4\bigg\}\bigg)\\
&+w(B(y,r))^{1/{\alpha}-1-1/q}\cdot
w\bigg(\bigg\{x\in B(y,r):\bigg(\sum_{j=1}^\infty\Big|\mathcal S_\gamma\Big([b_{B(y,r)}-b]f^\infty_j\Big)(x)\Big|^2\bigg)^{1/2}>\sigma/4\bigg\}\bigg)\\
:=&J'_3(y,r)+J'_4(y,r).
\end{split}
\end{equation*}
Applying Chebyshev's inequality and previous pointwise estimate \eqref{key estimate1}, together with $(ii)$ of Lemma \ref{BMO}, we deduce that
\begin{equation*}
\begin{split}
J'_3(y,r)&\leq w(B(y,r))^{1/{\alpha}-1-1/q}\cdot\frac{\,4\,}{\sigma}
\int_{B(y,r)}\big|b(x)-b_{B(y,r)}\big|\cdot\bigg(\sum_{j=1}^\infty\big|\mathcal S_\gamma(f^{\infty}_j)(x)\big|^2\bigg)^{1/2}w(x)\,dx\\
&\leq C\cdot w(B(y,r))^{1/{\alpha}-1/q}
\sum_{l=1}^\infty\frac{1}{|B(y,2^{l+1}r)|}\int_{B(y,2^{l+1}r)}\frac{\|\vec{f}(z)\|_{\ell^2}}{\sigma}dz\\
&\times\frac{1}{w(B(y,r))}\int_{B(y,r)}\big|b(x)-b_{B(y,r)}\big|w(x)\,dx\\
&\leq C\|b\|_*\sum_{l=1}^\infty\frac{1}{|B(y,2^{l+1}r)|}\int_{B(y,2^{l+1}r)}\frac{\|\vec{f}(z)\|_{\ell^2}}{\sigma}dz
\times w(B(y,r))^{1/{\alpha}-1/q}.
\end{split}
\end{equation*}
Furthermore, note that $t\leq\Phi(t)=t\cdot(1+\log^+t)$ for any $t>0$. This fact, together with previous estimate \eqref{main esti1} and the $A_1$ condition, implies that
\begin{equation*}
\begin{split}
J'_3(y,r)&\leq C\|b\|_*\sum_{l=1}^\infty\frac{1}{w(B(y,2^{l+1}r))}\int_{B(y,2^{l+1}r)}\frac{\|\vec{f}(z)\|_{\ell^2}}{\sigma}\cdot w(z)\,dz
\times w(B(y,r))^{1/{\alpha}-1/q}\\
&\leq C\|b\|_*\sum_{l=1}^\infty\frac{1}{w(B(y,2^{l+1}r))}\int_{B(y,2^{l+1}r)}\Phi\bigg(\frac{\|\vec{f}(z)\|_{\ell^2}}{\sigma}\bigg)\cdot w(z)\,dz
\times w(B(y,r))^{1/{\alpha}-1/q}\\
&\leq C\|b\|_*\sum_{l=1}^\infty\bigg\|\Phi\bigg(\frac{\|\vec{f}(\cdot)\|_{\ell^2}}{\sigma}\bigg)\bigg\|_{L\log L(w),B(y,2^{l+1}r)}
\times w(B(y,r))^{1/{\alpha}-1/q}.
\end{split}
\end{equation*}
On the other hand, applying the pointwise estimate \eqref{key estimate2} and Chebyshev's inequality, we have
\begin{equation*}
\begin{split}
J'_4(y,r)&\leq w(B(y,r))^{1/{\alpha}-1-1/q}\cdot\frac{\,4\,}{\sigma}
\int_{B(y,r)}\bigg(\sum_{j=1}^\infty\Big|\mathcal S_\gamma\Big([b_{B(y,r)}-b]f^\infty_j\Big)(x)\Big|^2\bigg)^{1/2}w(x)\,dx\\
&\leq w(B(y,r))^{1/{\alpha}-1/q}\cdot\frac{\,C\,}{\sigma}
\sum_{l=1}^\infty\frac{1}{|B(y,2^{l+1}r)|}\int_{B(y,2^{l+1}r)}\big|b(z)-b_{B(y,r)}\big|\cdot\bigg(\sum_{j=1}^\infty\big|f_j(z)\big|^2\bigg)^{1/2}dz\\
&\leq w(B(y,r))^{1/{\alpha}-1/q}\cdot\frac{\,C\,}{\sigma}
\sum_{l=1}^\infty\frac{1}{|B(y,2^{l+1}r)|}\int_{B(y,2^{l+1}r)}\big|b(z)-b_{B(y,2^{l+1}r)}\big|
\cdot\bigg(\sum_{j=1}^\infty\big|f_j(z)\big|^2\bigg)^{1/2}dz\\
&+w(B(y,r))^{1/{\alpha}-1/q}\cdot\frac{\,C\,}{\sigma}
\sum_{l=1}^\infty\frac{1}{|B(y,2^{l+1}r)|}\int_{B(y,2^{l+1}r)}\big|b_{B(y,2^{l+1}r)}-b_{B(y,r)}\big|
\cdot\bigg(\sum_{j=1}^\infty\big|f_j(z)\big|^2\bigg)^{1/2}dz\\
&:=J'_5(y,r)+J'_6(y,r).
\end{split}
\end{equation*}
For the term $J'_5(y,r)$, since $w\in A_1$, it follows from the $A_1$ condition and the inequality $t\leq \Phi(t)$ that
\begin{equation*}
\begin{split}
J'_5(y,r)&\leq w(B(y,r))^{1/{\alpha}-1/q}\\
&\times\frac{\,C\,}{\sigma} \sum_{l=1}^\infty\frac{1}{w(B(y,2^{l+1}r))}\int_{B(y,2^{l+1}r)}\big|b(z)-b_{B(y,2^{l+1}r)}\big|
\cdot\bigg(\sum_{j=1}^\infty\big|f_j(z)\big|^2\bigg)^{1/2}w(z)\,dz\\
&\leq C\cdot w(B(y,r))^{1/{\alpha}-1/q}\\
&\times\sum_{l=1}^\infty\frac{1}{w(B(y,2^{l+1}r))}\int_{B(y,2^{l+1}r)}\big|b(z)-b_{B(y,2^{l+1}r)}\big|
\cdot\Phi\bigg(\frac{\|\vec{f}(z)\|_{\ell^2}}{\sigma}\bigg)w(z)\,dz.
\end{split}
\end{equation*}
Furthermore, we use the generalized H\"older's inequality \eqref{Wholder} and \eqref{Jensen} to obtain
\begin{equation*}
\begin{split}
J'_5(y,r)&\leq C\cdot w(B(y,r))^{1/{\alpha}-1/q}\\
&\times\sum_{l=1}^\infty\big\|b-b_{B(y,2^{l+1}r)}\big\|_{\exp L(w),B(y,2^{l+1}r)}
\bigg\|\Phi\bigg(\frac{\|\vec{f}(\cdot)\|_{\ell^2}}{\sigma}\bigg)\bigg\|_{L\log L(w),B(y,2^{l+1}r)}\\
&\leq C\|b\|_*\sum_{l=1}^\infty\bigg\|\Phi\bigg(\frac{\|\vec{f}(\cdot)\|_{\ell^2}}{\sigma}\bigg)\bigg\|_{L\log L(w),B(y,2^{l+1}r)}
\times w(B(y,r))^{1/{\alpha}-1/q}.
\end{split}
\end{equation*}
For the last term $J'_6(y,r)$ we proceed as follows. Using $(i)$ of Lemma \ref{BMO} together with the $A_1$ condition on $w$ and the inequality $t\leq\Phi(t)$, we deduce that
\begin{equation*}
\begin{split}
J'_6(y,r)&\leq C\cdot w(B(y,r))^{1/{\alpha}-1/q}
\sum_{l=1}^\infty(l+1)\|b\|_*\cdot\frac{1}{|B(y,2^{l+1}r)|}\int_{B(y,2^{l+1}r)}\frac{\|\vec{f}(z)\|_{\ell^2}}{\sigma}dz\\
&\leq C\cdot w(B(y,r))^{1/{\alpha}-1/q}
\sum_{l=1}^\infty(l+1)\|b\|_*\cdot\frac{1}{w(B(y,2^{l+1}r))}\int_{B(y,2^{l+1}r)}\frac{\|\vec{f}(z)\|_{\ell^2}}{\sigma}\cdot w(z)\,dz\\
&\leq C\|b\|_*\cdot w(B(y,r))^{1/{\alpha}-1/q}
\sum_{l=1}^\infty\frac{(l+1)}{w(B(y,2^{l+1}r))}\int_{B(y,2^{l+1}r)}\Phi\bigg(\frac{\|\vec{f}(z)\|_{\ell^2}}{\sigma}\bigg)\cdot w(z)\,dz\\
&\leq C\|b\|_*\sum_{l=1}^\infty\big(l+1\big)\cdot\bigg\|\Phi\bigg(\frac{\|\vec{f}(\cdot)\|_{\ell^2}}{\sigma}\bigg)\bigg\|_{L\log L(w),B(y,2^{l+1}r)}
\times w(B(y,r))^{1/{\alpha}-1/q},
\end{split}
\end{equation*}
where in the last inequality we have used \eqref{main esti1}. Summarizing the estimates derived above, we conclude that
\begin{align}\label{WJ2yr}
J'_2(y,r)&\leq C\|b\|_* 
\sum_{l=1}^\infty\big(l+1\big)\cdot\bigg\|\Phi\bigg(\frac{\|\vec{f}(\cdot)\|_{\ell^2}}{\sigma}\bigg)\bigg\|_{L\log L(w),B(y,2^{l+1}r)}
\times w(B(y,r))^{1/{\alpha}-1/q}\notag\\
&=C\sum_{l=1}^\infty w(B(y,2^{l+1}r))^{1/{\alpha}-1/q}
\bigg\|\Phi\bigg(\frac{\|\vec{f}(\cdot)\|_{\ell^2}}{\sigma}\bigg)\bigg\|_{L\log L(w),B(y,2^{l+1}r)}\notag\\
&\times\big(l+1\big)\cdot\frac{w(B(y,r))^{1/{\alpha}-1/q}}{w(B(y,2^{l+1}r))^{1/{\alpha}-1/q}}.
\end{align}
Therefore by taking the $L^q_{\mu}$-norm of both sides of \eqref{Jprime}(with respect to the variable $y$), and then using Minkowski's inequality, \eqref{WJ1yr} and \eqref{WJ2yr}, we finally obtain
\begin{equation*}
\begin{split}
&\Bigg\|w(B(y,r))^{1/{\alpha}-1-1/q}\cdot
w\bigg(\bigg\{x\in B(y,r):\bigg(\sum_{j=1}^\infty\big|\big[b,\mathcal S_{\gamma}\big](f_j)(x)\big|^2\bigg)^{1/2}>\sigma\bigg\}\bigg)\Bigg\|_{L^q_{\mu}}\\
&\leq\big\|J'_1(y,r)\big\|_{L^q_{\mu}}+\big\|J'_2(y,r)\big\|_{L^q_{\mu}}\\
&\leq C\bigg\|w(B(y,2r))^{1/{\alpha}-1/q}\bigg\|\Phi\bigg(\frac{\|\vec{f}(\cdot)\|_{\ell^2}}{\sigma}\bigg)\bigg\|_{L\log L(w),B(y,2r)}\bigg\|_{L^q_{\mu}}\\
&+C\sum_{l=1}^\infty\bigg\|w(B(y,2^{l+1}r))^{1/{\alpha}-1/q}
\bigg\|\Phi\bigg(\frac{\|\vec{f}(\cdot)\|_{\ell^2}}{\sigma}\bigg)\bigg\|_{L\log L(w),B(y,2^{l+1}r)}\bigg\|_{L^q_{\mu}}\\
&\times\big(l+1\big)\cdot\frac{w(B(y,r))^{1/{\alpha}-1/q}}{w(B(y,2^{l+1}r))^{1/{\alpha}-1/q}}\\
&\leq C\bigg\|\Phi\bigg(\frac{\|\vec{f}(\cdot)\|_{\ell^2}}{\sigma}\bigg)\bigg\|_{(L\log L,L^q)^{\alpha}(w;\mu)}\\
&+C\bigg\|\Phi\bigg(\frac{\|\vec{f}(\cdot)\|_{\ell^2}}{\sigma}\bigg)\bigg\|_{(L\log L,L^q)^{\alpha}(w;\mu)}
\times\sum_{l=1}^\infty\big(l+1\big)\cdot\frac{w(B(y,r))^{1/{\alpha}-1/q}}{w(B(y,2^{l+1}r))^{1/{\alpha}-1/q}}\\
&\leq C\bigg\|\Phi\bigg(\frac{\|\vec{f}(\cdot)\|_{\ell^2}}{\sigma}\bigg)\bigg\|_{(L\log L,L^q)^{\alpha}(w;\mu)},
\end{split}
\end{equation*}
where the last inequality follows from \eqref{psi3}. This completes the proof of Theorem \ref{mainthm:4}.
\end{proof}

\end{document}